\numberwithin{equation}{section}
\title{Nonlinear stability of the slowly-rotating \KdS{} family}
\author{Allen Juntao Fang\thanks{Sorbonne
    Universit\'{e}, CNRS, Laboratoire
    Jacques-Louis Lions (LJLL), F-75005 Paris, France,
    fanga@ljll.math.upmc.fr}}
\date{}
\begin{document}
\maketitle

\begin{abstract}
  In this paper, we provide a new proof of nonlinear stability of the
  slowly-rotating \KdS{} family of black holes as a family of
  solutions to the Einstein vacuum equations with cosmological
  constant $\Lambda>0$, originally established by Hintz and Vasy in
  their seminal work \cite{hintz_global_2018}. Using the linear theory
  developed in the companion paper \cite{fang_linear_2021}, we prove
  the nonlinear stability of slowly-rotating \KdS{} using a bootstrap
  argument, avoiding the need for a Nash-Moser argument, and requiring
  initial data small only in the $H^6$ norm.
\end{abstract}



\section{Introduction}

The aim of this paper is to use the linear theory developed in
\cite{fang_linear_2021} in order to provide a new proof of the
nonlinear stability of the slowly-rotating \KdS{} family of black hole
solutions to Einstein's vacuum equations.

\subsection{The black hole stability problem}

In this paper, we consider the Einstein vacuum equations (EVE) which
govern Einstein's theory of relativity under the assumption of vacuum
and are given by
\begin{equation}
  \label{nonlinear:eq:intro:EVE}
  \Ric(g) - \Lambda g = 0,
\end{equation}
where $g$ is a Lorentzian metric with signature $(-,+,+,+)$ on a
$3+1$ dimensional manifold $\mathcal{M}$, $\Ric$ denotes its Ricci
tensor, and $\Lambda$ is the cosmological constant.

Of particular interest are black hole solutions to
\eqref{nonlinear:eq:intro:EVE}. An explicit two-parameter family of black hole
solutions to EVE with $\Lambda=0$ is given by the asymptotically flat
Kerr black hole solutions. The Kerr family represents a family of
rotating, uncharged black hole solution, and is of particular interest
due to the final state conjecture, which states that all rotating
non-charged black holes are asymptotically Kerr.

A central conjecture in the field is the stability of Kerr
black holes, which surmises that small perturbations of a Kerr
spacetime asymptote to a nearby member of the Kerr family\footnote{For
  black hole solutions, the question of stability is posed at the
  level of a given family of black hole solutions rather than an
  individual solution. This is in line with the physical expectation
  that nontrivial perturbations of a black hole alter its mass and
  angular momentum.}. There has been extensive work
done towards addressing this question. The most simple member of the
Kerr family is in fact the Minkowski spacetime, which does not itself
contain a black hole. The stability of Minkowski was first shown in
the breakthrough result of Christodoulou-Klainerman in
\cite{christodoulou_global_1993}. Following substantial developments
in the field, nonlinear stability of Schwarzschild was then shown by
Klainerman-Szeftel in \cite{klainerman_global_2020}, and by
Dafermos-Holzegel-Rodnianski in \cite{dafermos_non-linear_2021}. Most
recently, Klainerman-Szeftel were able to obtain a nonlinear stability
result for the slowly-rotating Kerr family
\cite{klainerman_kerr_2021}. Behind these nonlinear results, there
was of course substantial work done on the stability of linear
problems on black hole backgrounds. For an overview of the literature
on linear waves on black hole backgrounds and linearized stability of
Einstein's equations, we refer the reader to the introduction of 
\cite{fang_linear_2021}. 

A close relative of the Kerr
family of solutions are the \KdS{} family of black hole solutions,
which is the main focus of the present paper. Like their Kerr cousins,
the \KdS{} black hole spacetimes are a 2-parameter family of black
hole solutions to EVE representing rotating, uncharged black
holes. The key difference between the two is that members of the
\KdS{} family solve EVE with $\Lambda>0$, while members of the Kerr
family solve EVE with $\Lambda=0$.

The \KdS{} family features many of the same inherent geometric
obstacles to stability as the Kerr family. Both families feature
trapped null geodesics which do not escape to either the black hole
horizon or null infinity (the cosmological horizon in the case of
\KdS).  Ergoregions are also present in members of both families,
reflecting a lack of a global timelike Killing vectorfield in both
Kerr and \KdS{} metrics. However, it is generally expected that
proving stability of \KdS{} is substantially easier than proving
stability for Kerr, owing to the spatially compact\footnote{The part
  of \KdS{} treated both in the current paper and in
  \cite{hintz_global_2018} is called the \emph{stationary} region. One
  can also look at the stability of the complementary causal region,
  which is often referred to as the \emph{cosmological (or expanding)
    region}. The cosmological region is itself not spatially compact,
  and features rather different behavior compared to the stationary
  region cf. \cite{schlue_decay_2021}.}  nature of the domain of outer
communication of members of the \KdS{} family. For this reason,
studying the stability of \KdS{} is both an interesting question in
and of itself, and a potential way of gaining insight to further study
of the stability of the Kerr family.

\subsection{Statement of the main theorem}

We now state the main theorem of this paper. 

\begin{theorem} [Nonlinear stability of the slowly-rotating \KdS{}
  family, first version]
  \label{nonlinear:thm:intro:NL-stab:informal}
  Fix $\Lambda>0$. Assume that the initial data of a solution $g$ of
  \eqref{nonlinear:eq:intro:EVE} are sufficiently close to a slowly-rotating
  \KdS{} metric $g_{b^0}$ with black hole parameters
  $b^0 = (M^0, a^0)$. Then $g$ exists globally and moreover, there
  exist black hole parameters $b_\infty = (M_{\infty}, a_\infty)$
  close to $(M^0, a^0)$ such that
  \begin{equation*}
    g - g_{b_\infty} = O(e^{-\SpectralGap \tStar}),\quad \tStar\to \infty,
  \end{equation*}
  for some $\SpectralGap>0$ constant that is independent of the
  initial data. 
\end{theorem}
We refer the reader to Theorem \ref{nonlinear:thm:main} for the precise
statement of the main theorem.  This result was first proven by Hintz
and Vasy in their seminal work \cite{hintz_global_2018} using harmonic
coordinates to treat EVE as a system of quasilinear wave equations in
terms of the metric coefficients. Their proof relies on the framework
of the method of scattering resonances, using microlocal analysis and
the $b$-calculus to prove extremely strong linear stability results,
which they then used to solve the nonlinear problem with a Nash-Moser
scheme.

As expected, the exponential decay we proved in
\cite{fang_linear_2021} at the linear level allows for a relatively
short nonlinear analysis in the present paper. We provide some
comparisons between the proof presented here and the original proof of
Hintz and Vasy in \cite{hintz_global_2018}.
\begin{enumerate}
\item Like in the original work of Hintz and Vasy
  \cite{hintz_global_2018}, we rely on wave coordinates, and the main
  estimates used are linear estimates on exact \KdS{} (see
  \cite{fang_linear_2021}, or Chapter 5 of \cite{hintz_global_2018}).
\item Instead of employing an iterative Nash-Moser scheme to prove
  nonlinear stability, we use a standard  bootstrap
  argument. In particular, we solve the true nonlinear Einstein
  equations up to a finite time, rather than solving a sequence of
  linearized Einstein equations as in \cite{hintz_global_2018}. Doing so
  preserves the structure of EVE and in particular allows us to avoid
  the need for constraint damping in contrast with the approach taken
  by Hintz and Vasy.
\item Using a bootstrap argument instead of Nash-Moser, we close the
  argument with a much lower level of regularity imposed on the
  initial data. Using our method, we expect to actually be able to
  improve the regularity requirements to require only
  $\frac{9}{2}+\delta$, $\delta>0$ derivatives (see Remark
  \ref{nonlinear:remark:reg-threshold}). However, for the sake of avoiding
  fractional Sobolev spaces, we prove the theorem here with initial
  data in $H^6$ instead. Regardless, this represents an improvement
  over the 21 derivatives used in \cite{hintz_global_2018}.
\end{enumerate}

\subsection{Strategy of the proof of Theorem \ref{nonlinear:thm:intro:NL-stab:informal}}
We make the following three bootstrap
assumptions (see Section \ref{nonlinear:sec:BA-assumptions:specific}):
\begin{enumerate}
\item low-regularity exponential decay;
\item high-regularity integrated slow exponential growth; and,
\item that there exists a suitably small gauge choice such that we
  have a certain orthogonality condition. This argument is similar in
  spirit to the last slice argument in other black hole stability
  proofs. See for example
  \cite{klainerman_kerr_2021,klainerman_effective_2019}.
\end{enumerate}
Each of these bootstrap assumptions will be improved in Section
\ref{nonlinear:sec:bs}.
\begin{enumerate}
\item Improving the low-regularity exponential decay bootstrap
  assumption is done in Section \ref{nonlinear:sec:bs-decay}, and is a direct
  consequence of the linear theory in \cite{fang_linear_2021}.
\item Improving the high-regularity exponential growth bootstrap
  assumption is done in Section \ref{nonlinear:sec:bs-energy}. This is done
  using a weak Morawetz estimate on a perturbation of \KdS{} which only
  yields exponential growth, but does not lose derivatives.
\item Improving the gauge bootstrap assumption is done in Section
  \ref{nonlinear:sec:bs-gauge} by relying on the implicit function
  theorem. 
\end{enumerate}
These improvements of the bootstrap assumptions allow us to extend the
bootstrap time (see Section \ref{nonlinear:sec:bs-ext}). This then allows us to
use a continuity argument in \ref{nonlinear:sec:closing-proof} to conclude the
proof of Theorem \ref{nonlinear:thm:intro:NL-stab:informal}.

The previously mentioned weak Morawetz estimate needed to improve the
high-regularity exponential growth bootstrap assumption is a
high-frequency Morawetz estimate on exponentially decaying
perturbations of a slowly-rotating \KdS{} background. This Morawetz
estimate will be proven by perturbing the high-frequency Morawetz 
estimate proven in the linear theory (Theorem
\ref*{linear:thm:resolvent-estimate:main} in
\cite{fang_linear_2021}). The perturbed high-frequency Morawetz
estimate will feature arbitrarily slow exponential growth rather than
small exponential decay but will no longer lose derivatives.  This is
critical in improving the high regularity bootstrap
assumptions. 

\subsection{Outline of the paper}

In Section \ref{nonlinear:sec:setup} we set up the main geometric framework. In
particular, we construct regular coordinate systems on the \KdS{}
family and introduce some crucial vectorfields. In Section
\ref{nonlinear:sec:EVE}, we define the choice of wave gauge for Einstein's
vacuum equations. In Section \ref{nonlinear:sec:main-thm}, we state the precise
form of the main theorem and provide an outline of the proof. In
Section \ref{nonlinear:sec:lin-theory} we review the main points of the linear
theory in \cite{fang_linear_2021} which will be
needed in the nonlinear scheme. 

In Section \ref{nonlinear:sec:BA-Assume}, we construct the semi-global extension
of EVE, and state out bootstrap assumptions for solutions of the
aforementioned semi-global extension. Bootstrap assumptions will be
made at the level of high-regularity growth, low-regularity decay, and
gauge. Each of these will in turn be improved and extended in Section
\ref{nonlinear:sec:bs}, leading to a proof of Theorem \ref{nonlinear:thm:intro:NL-stab:informal}
using a standard continuity argument.

Finally, in Section \ref{nonlinear:sec:energy-perturb-kds}, we prove a
series of energy estimates on exponentially decaying perturbations of
a slowly-rotating \KdS{} background. These estimates are direct
perturbations of estimates in \cite{fang_linear_2021} and are needed
to improve the high-regularity exponential growth bootstrap
assumption.

\subsection{Acknowledgments}

The author would like to acknowledge J\'{e}r\'{e}mie Szeftel for his
support, encouragement, and many helpful discussions. This work is
supported by the ERC grant ERC-2016 CoG 725589 EPGR.

\section{Geometric set up}
\label{nonlinear:sec:setup}

In this section, we define key geometric objects that
we will make use of later.

\subsection{Notational conventions}

Throughout the paper, Greek indices will be used to indicate the
spacetime indices $\{0,1,2,3\}$, lower-case Latin indices will be used
to represent the spatial indices $\{1,2,3\}$, and upper-case Latin
indices will be used to represent angular indices. We also use
$X^\flat$ and $\omega^\sharp$ to denote the canonical one-form associated
to the vectorfield $X$ and the canonical vectorfield associated to the
one-form $\omega$ respectively.

We will use $\cdot$ to indicate the natural contraction between two
objects. For two vectors, $v_1, v_2$, $v_1\cdot v_2$ will indicate
their dot product, while for tensors $u, w$, $u\cdot w$ will indicate
tensor contraction.

We will use $\nabla$ to indicate the full space-time derivative. 

\subsection{The \KdS{} family}
\label{nonlinear:sec:KdS}

The \KdS{} family of black holes, which will be presented explicitly
in what follows, are a family of stationary black hole solutions to
the Einstein vacuum equations (EVE) under the assumption of a positive
cosmological constant $\Lambda>0$. The two-parameter family is parameterized by
\begin{enumerate}
\item the mass of the black hole $M$ and,
\item the angular momentum of the black hole $a = |\AngularMomentum|$,
  where $\frac{\AngularMomentum}{|\AngularMomentum|}$ is the axis of
  symmetry of the black hole. 
\end{enumerate}
We will denote by $B$ the set of black-hole parameters $b=(M, a)$. 

In this paper, we will not deal with the full \KdS{} family, but
instead are primarily concerned only a subfamily characterized by two
features: first, that the mass of the black hole is subextremal
and satisfies $1-9\Lambda M^2>0$; and that the black hole is slowly
rotating, $\abs*{a}\ll M, \Lambda$. The subextremality of the mass ensures
that the event horizon and the cosmological horizon remain physically
separated, and the slow rotation ensures that the trapped set remains
physically separated from both the cosmological and the black-hole
ergoregions.

\subsubsection{The \SdS{} metric}

Given a cosmological constant $\Lambda$, and a black hole mass $M>0$
such that
\begin{equation}
  \label{nonlinear:eq:SdS:non-degeneracy-condition}
  1 - 9\Lambda M^2>0,
\end{equation}
we denote by
\begin{equation*}
  b_0 = (M, \mathbf{0})
\end{equation*}
the black hole parameters for a subextremal \SdS{} black
hole. The \SdS{} family represents a family of spherically symmetric, non-rotating
black hole solutions to Einstein's equations with positive
cosmological constant. On the \emph{domain of outer communication} (also
known in the literature as the \emph{static region}),
$\StaticRegion = \Real_t\times(r_{b_0,\EventHorizonFuture}, r_{b_0,
  \CosmologicalHorizonFuture})\times \Sphere^2$, with
$r_{b_0,\EventHorizonFuture}$, $r_{b_0, \CosmologicalHorizonFuture}$
defined below, the \SdS{} metric a can be expressed in standard
Boyer-Lindquist coordinates by
\begin{equation}
  \label{nonlinear:eq:SdS:metric-def:BL}
  g_{b_0} = - \mu_{b_0}dt^2 + \mu_{b_0}^{-1}dr^2 + r^2\UnitSphereMetric,
\end{equation}
where
\begin{equation*}
  \mu_{b_0}(r) = 1 - \frac{2M}{r}- \frac{\Lambda r^2}{3},
\end{equation*}
and $\UnitSphereMetric$ denotes the standard metric on
$\UnitSphere^2$. The subextremal mass restriction in
(\ref{nonlinear:eq:SdS:non-degeneracy-condition}) guarantees that $\mu_{b_0}(r)$
has three roots: two positive simple roots, and one negative simple
root, 
\begin{equation*}
  r_-<0<r_{b_0,\EventHorizonFuture}<r_{b_0,\CosmologicalHorizonFuture}<\infty.
\end{equation*}
The hypersurfaces defined by
$\{r = r_{b_0, \EventHorizonFuture}\}, \{r = r_{b_0,
  \CosmologicalHorizonFuture}\}$ are the \textit{(future) event
  horizon} and the \textit{(future) cosmological horizon},
respectively, and bound the domain of outer communications, on which
the form of the metric in (\ref{nonlinear:eq:SdS:metric-def:BL}) is
valid. Because we will need to prove estimates which extend slightly
into the black hole interior region and the cosmological region, we
define, fixing $b_0$ and
$0<\varepsilon_{\StaticRegionWithExtension}\ll 1$,
\begin{align}  
  \StaticRegionWithExtension
  &:=
    \Real_{\tStar}
    \times (r_{b_0, \EventHorizonFuture} - \varepsilon_{\StaticRegionWithExtension}, r_{b_0, \CosmologicalHorizonFuture} + \varepsilon_{\StaticRegionWithExtension})
    \times \Sphere^2\label{nonlinear:eq:extended-region-def},\\
  \Sigma
  &:= (r_{b_0, \EventHorizonFuture} - \varepsilon_{\StaticRegionWithExtension}, r_{b_0, \CosmologicalHorizonFuture} + \varepsilon_{\StaticRegionWithExtension}) \label{nonlinear:eq:Sigma-def}.
\end{align}
We also define
\begin{equation}
  \label{nonlinear:eq:extended-horizon-def}
  \EventHorizonFuture_-:=\curlyBrace*{r_{b_0, \EventHorizonFuture} - \varepsilon_{\StaticRegionWithExtension}},\qquad
  \CosmologicalHorizonFuture_+:=\curlyBrace*{r_{b_0, \CosmologicalHorizonFuture} + \varepsilon_{\StaticRegionWithExtension}}.
\end{equation}

\subsubsection{The \KdS{} metric}

More general than the \SdS{} family, the \KdS{} family represents a
stationary, \textit{axi-symmetric}, family of black-hole solutions to
Einstein's equations, of which the \SdS{} family is clearly a
sub-family. In this section, we will detail various useful coordinate
systems that we will use subsequently. We remark that throughout the
paper, we are mainly interested in \KdS{} metrics that are
slowly-rotating, i.e $a = \abs{\AngularMomentum}$ is small relative to
$M, \Lambda$, and thus, close to a \SdS{} relative.

\begin{definition}
  In the Boyer-Lindquist
  $(t, r, \theta, \varphi)\in \Real\times (r_{b, \EventHorizonFuture},
  r_{b, \CosmologicalHorizonFuture})\times (0,\pi)\times
  \Sphere^1_\varphi$ coordinates (with $r_{b, \EventHorizonFuture}$,
  and $r_{b, \CosmologicalHorizonFuture}$ defined below), the \KdS{}
  metric $g_b:=g(M,a)$, and inverse metric
  $G_b := G(M, a)$ take the form:
  \begin{equation}
    \label{nonlinear:eq:KdS-metric:static}
    \begin{split}
      g_b
      &= \rho_b^2\left(\frac{dr^2}{\Delta_b}
        +  \frac{d\theta^2}{\varkappa_b}\right)
      + \frac{\varkappa_b\sin^2\theta}{(1+\lambda_b)^2\rho_b^2 }\left(a\,dt- (r^2+a^2)\,d\varphi\right)^2
      - \frac{\Delta_b}{(1+\lambda_b)^2\rho_b^2}(dt-a\sin^2\theta\,d\varphi)^2,\\
      G_{b} &=  \frac{1}{\rho_b^2}\left(\Delta_{b}\p_r^2 + \varkappa_b\p_\theta^2\right)
      + \frac{(1+\lambda_b)^2}{\rho_b^2\varkappa_b\sin^2\theta}\left(a\sin^2\theta \p_{t} + \p_\varphi\right)^2
      - \frac{(1+\lambda_b)^2}{\Delta_b\rho_b^2}\left(
        \left(r^2+a^2\right)\p_t + a\p_\varphi
      \right)^2,
    \end{split}  
  \end{equation}
  where
  \begin{gather*}
    \Delta_b(r) := (r^2+a^2)\left(1-\frac{1}{3}\Lambda r^2\right) - 2M r,\qquad
    \rho_b^2 := r^2+a^2\cos^2\theta,\\
    \lambda_b :=\frac{1}{3}\Lambda a^2,\qquad
    \varkappa_b:=1+\lambda_b \cos^2\theta. 
  \end{gather*}
\end{definition}

\begin{remark}
  It is easy to observe that the metric reduces to the \SdS{} metric
  $g_{b_0}$ expressed in Boyer-Lindquist coordinates in
  \eqref{nonlinear:eq:SdS:metric-def:BL} when $b=b_0$ ($a=0$). When $a\neq 0$, the
  spherical coordinates $(\theta, \varphi)$ are chosen so that
  $\frac{\AngularMomentum}{\abs{\AngularMomentum}}\in \Sphere^2$ is
  defined by $\theta=0$, and $\p_{\varphi}$ generates counter-clockwise
  rotation around the axis of rotation.  
\end{remark}

\begin{definition}
  As in the \SdS{} case, we define the \emph{event horizon} and the
  \emph{cosmological horizon} of $g_{b}$, denoted by $\EventHorizonFuture$,
  $\CosmologicalHorizonFuture$ to be the $r$-constant hypersurfaces
  \begin{equation*}
    \EventHorizonFuture:=\{r=r_{b, \EventHorizonFuture}\},\qquad \CosmologicalHorizonFuture:=\{r=r_{b, \CosmologicalHorizonFuture}\}
  \end{equation*}
  respectively, where
  $r_{b, \EventHorizonFuture} < r_{b,\CosmologicalHorizonFuture}$ are
  the two largest distinct positive roots of $\Delta_b$.
\end{definition}

A consequence of the implicit function theorem is that these roots
depend smoothly on the black hole parameters $b=(M, a)$, and that for
any $\varepsilon_{\StaticRegionWithExtension}>0$, there exists $a$ sufficiently
small such that $\abs*{r_{b,\EventHorizonFuture} - r_{b_0,
    \EventHorizonFuture}} + \abs*{r_{b,\CosmologicalHorizonFuture} - r_{b_0,
    \CosmologicalHorizonFuture}} <
\varepsilon_{\StaticRegionWithExtension}$. Moreover, since these
two horizons are null, the domain of outer communications, bounded
by $\EventHorizonFuture, \CosmologicalHorizonFuture$ is
a causal domain that is foliated by compact space-like hypersurfaces.

Both the \SdS{} metric \eqref{nonlinear:eq:SdS:metric-def:BL} and the \KdS{}
metric (\ref{nonlinear:eq:KdS-metric:static}) in the Boyer-Lindquist coordinates
have singularities at the event horizon and the cosmological
horizon. Fortunately, this is only a coordinate singularity and can be
smoothed out by an appropriate change of coordinates.

\subsubsection{Regular coordinates on \KdS{} spacetimes}

It is clear upon inspection that the \SdS{} and \KdS{} metrics in
Boyer-Lindquist coordinates as expressed in
\eqref{nonlinear:eq:SdS:metric-def:BL} and \eqref{nonlinear:eq:KdS-metric:static} have
degeneracies at the event and cosmological horizons. However, these
are merely coordinate singularities, and we can construct a smooth
coordinate system that extends beyond both the event and the
cosmological horizon.

On \SdS{}, we introduce the
function
\begin{equation*}
  \tStar := t - F_{b_0}(r).
\end{equation*}
In the coordinates
$(\tStar , r, \omega)$, the \SdS{} metric, $g_{b_0}$, and the inverse
metric, $G_{b_0}$, take the form
\begin{equation}
  \label{nonlinear:eq:SdS:regular}
  \begin{split}
    g_{b_0} &= -\mu_{b_0} \,d\tStar^2
    + 2F_{b_0}'(r)\mu \,d\tStar dr
    +(\mu_{b_0} F_{b_0}'(r)^2 +\mu_{b_0}^{-1})\,dr^2
    + r^2\UnitSphereMetric,\\
    G_{b_0} &= -\left(\mu_{b_0}^{-1} +\mu_{b_0} F_{b_0}'(r)^2\right)\p_{\tStar}^2
    + 2F_{b_0}'(r)\mu_{b_0}\,\p_{\tStar}\p_r
    + \mu_{b_0}\p_r^2 + r^{-2}\UnitSphereInvMetric,
  \end{split}
\end{equation}
which can be required to fulfill certain useful properties.
\begin{lemma}
  \label{nonlinear:lemma:SdS:Kerr-star-regular-coordinates}
  Fix some interval
  $\Interval_{b_0}\subset (r_{b_0,\EventHorizonFuture},
  r_{b_0,\CosmologicalHorizonFuture})$. Then there exists a choice of
  $F_{b_0}(r)$ such that
  \begin{enumerate}
  \item the $\tStar$-constant hypersurfaces are space-like. That is,
    that
    \begin{equation*}
      -\frac{1}{\mu_{b_0}} + F_{b_0}'(r)^2\mu_{b_0} < 0;
    \end{equation*}
  \item $F_{b_0}(r)$ satisfies that
    \begin{equation*}
    F_{b_0}(r) \ge 0,\qquad  r\in(r_{b_0,\EventHorizonFuture},r_{b_0,\CosmologicalHorizonFuture}),
  \end{equation*}
  with equality for $r\in \Interval_{b_0}$.
  \end{enumerate}
\end{lemma}
\begin{proof}
  See Lemma \ref*{linear:lemma:SdS:Kerr-star-regular-coordinates} in \cite{fang_linear_2021}.
\end{proof}

We construct such a new, Kerr-star, coordinate system explicitly (see
similar constructions in Section 5.5 of \cite{dafermos_lectures_2008},
Section 4 of \cite{tataru_local_2010}, and Section 3.2 of
\cite{hintz_global_2018}). First define the new variables
\begin{equation}
  \label{nonlinear:eq:KdS-regular-coordinates:Hintz-Vasy-regular-def}
  \tStar = t - F_b(r),\quad \phiStar = \varphi - \Phi_b(r),
\end{equation}
where $F_b$ and $\Phi_b$ are smooth functions on
$(r_{b_0, \EventHorizonFuture} + \varepsilon_{\StaticRegionWithExtension},
r_{\CosmologicalHorizonFuture} -
\varepsilon_{\StaticRegionWithExtension})$.
We can then compute that the metric takes the form
\begin{equation}
  \label{nonlinear:eq:KdS:regular}
  \begin{split}
    g_b =&{}  \frac{\varkappa \sin^2\theta}{(1+\lambda_b)^2\rho_b^2}
    \left(
      a(d\tStar+ F_b'\,dr) - (r^2+a^2)(d\phiStar + \Phi_b'\,dr)
    \right)^2\\
    & - \frac{\Delta_b}{(1+\lambda_b)\rho_b^2}\left(
      d\tStar + F_b'\,dr
      - a \sin^2\theta(d\phiStar +  \Phi_b'\,dr)
      - \frac{(1+\lambda_b)\rho_b^2}{\Delta_b} \,dr
    \right)\\
    & + \frac{2}{1+\lambda_b}\left(
      d\tStar + F_b'\,dr
      - a \sin^2\theta(d\phiStar + \Phi_b'\,dr)
      - \frac{(1+\lambda_b)\rho_b^2}{\Delta_b} \,dr
    \right)dr
    +\frac{\rho_b^2}{\varkappa_b}\,d\theta^2.  
  \end{split}  
\end{equation}

We pick the $F_b, \Phi_b$ so that the $(\tStar,r,\theta,\phiStar)$
coordinate system expends smoothly beyond the horizons, is identical
to the Boyer-Lindquist coordinates on a small neighborhood of $r=3M$,
and such that the $\tStar$-constant hypersurfaces are spacelike.
\begin{lemma}
  \label{nonlinear:lemma:KdS:Kerr-star-regular-coordinates}
  Fix an interval $\Interval_b := (r_1, r_2)$ such that
  $r_{b_0,\EventHorizonFuture}+\varepsilon_{\StaticRegionWithExtension}<r_1<r_2<r_{b_0,\CosmologicalHorizonFuture}
  - \varepsilon_{\StaticRegionWithExtension}$. Then we can pick
  $F_b, \Psi_b$ so that
  \begin{enumerate}
  \item the choice extends the choice of regular coordinates for
    \SdS{} in \eqref{nonlinear:eq:SdS:regular} in the sense that when
    $b=b_0$,
    \begin{equation*}
      F_{b} = F_{b_0},\qquad \Phi_b = 0,
    \end{equation*}
    where $F_{b_0}$ is that of Lemma \ref{nonlinear:lemma:SdS:Kerr-star-regular-coordinates};
  \item $F_b(r) \ge 0$ for $r\in (r_{b, \EventHorizonFuture}, r_{b,
      \CosmologicalHorizonFuture})$ with equality for $r\in
    \Interval_b$;
  \item the $\tStar$-constant hypersurfaces are space-like, and in
    particular, defining
    \begin{equation}
      \label{nonlinear:eq:GInvdtdt-def}
      \GInvdtdt_b:= - \frac{1}{G_b(d\tStar, d\tStar)},
    \end{equation}
    we have that
    \begin{equation*}
      1\lesssim \GInvdtdt_b \lesssim 1
    \end{equation*}
    uniformly on $\StaticRegionWithExtension$;
  \item the metric $g_b$ is smooth on $\StaticRegionWithExtension$. 
  \end{enumerate}
\end{lemma}
\begin{proof}
  See Lemma \ref*{linear:lemma:KdS:Kerr-star-regular-coordinates} in \cite{fang_linear_2021}.
\end{proof}

\subsection{The Killing vectorfields $\KillT$, $\KillPhi$}
\label{nonlinear:sec:setup:killing}

\begin{definition}
  Define the vectorfields $\KillT = \p_{\tStar}$,
  $\KillPhi=\p_{\phiStar}$ using the Kerr-star coordinates in
  \eqref{nonlinear:eq:KdS:regular}. From the fact that the expression of $g_b$
  in the Kerr-star coordinates is independent of $\tStar$ and
  $\phiStar$, we immediately have that $\KillT, \KillPhi$ are Killing
  vectorfields.
\end{definition}

\begin{definition}
  On \KdS{} spacetimes, the \emph{ergoregion} is defined by
  \begin{equation*}
    \Ergoregion := \{(t,x): g_b(\KillT,\KillT)(t,x) > 0\}.
  \end{equation*}
  We define its boundary, the set of points where $\KillT$ is null, by
  the \emph{ergosphere}. 
\end{definition}
\begin{remark}
  Observe that for the \SdS{} sub-family, the ergosphere is exactly
  the event horizon and the cosmological horizon, and $\KillT$ is
  timelike on the whole of the interior of the domain of outer
  communication. 
\end{remark}

\subsection{Energy momentum tensor and divergence formulas}
\label{nonlinear:sec:EMT-and-div-thm}
In this section, we define the energy momentum tensor and some basic
divergence properties. Given a complex matrix function  $h$, let us
denote its complex conjugate by $\overline{h}$. Moreover, for any
2-tensor $h_{\mu\nu}$, we denote its symmetrization
\begin{equation*}
  h_{(\mu\nu)} = \frac{1}{2}\left(
    h_{\mu\nu} + h_{\nu\mu}
  \right).
\end{equation*}
\begin{definition}
  Let
  \begin{equation*}
    h: \StaticRegionWithExtension\to \Complex^D
  \end{equation*}
  be a complex-valued matrix function.  We define the
  \emph{energy-momentum tensor} to be the symmetric 2-tensor:
  \begin{equation*}
    \EMTensor_{\mu\nu}[h]
    = \nabla_{(\mu}\overline{h}\cdot\nabla_{\nu)}h
    - \frac{1}{2}g_{\mu\nu}\nabla_\alpha \overline{h}\cdot  \nabla^\alpha h,
  \end{equation*}
  where $\cdot$ here is the dot product between matrices. 
\end{definition}
The energy-momentum tensor satisfies the following divergence
property:
\begin{equation}
  \label{nonlinear:eq:EMTensor:divergence-property}
  \nabla_\mu \tensor[]{\EMTensor}{^\mu_\nu}[h]
  = \Re\left(\nabla_\nu\overline{h} \cdot \ScalarWaveOp[g]h\right),
\end{equation}
where we denote by 
\begin{equation*}
  \ScalarWaveOp[g] = \nabla^\alpha \partial_\alpha
\end{equation*}
the scalar wave operator\footnote{We will use
  $\nabla^\alpha\nabla_\alpha$ to denote the vectorial or tensorial
  wave operator.}.

This property will be the key to producing the various divergence
equations we use to derive the relevant energy estimates in the
subsequent sections.
\begin{definition}
  Let $X$ be a smooth vectorfield on $\StaticRegionWithExtension$, $m$
  be a smooth one-form on $\StaticRegionWithExtension$, and
  $q$ be a smooth function on $\StaticRegionWithExtension$.  We will
  refer to $X$ as the \emph{(vectorfield) multiplier}, to $m$ as the
  \emph{auxiliary zero-order corrector}, and to $q$ as
  the \emph{Lagrangian corrector}. Then define
  \begin{equation}
    \label{nonlinear:eq:J-K-currents:def}
    \begin{split}
      \JCurrent{X,q,m}_\mu[{h}]
      &= X^\nu\tensor[]{\EMTensor}{_{\mu\nu}}[{h}] 
      + \frac{1}{2}q\nabla_\mu (\abs*{h}^2)
      - \frac{1}{2}\nabla_\mu q\abs*{{h}}^2
      + \frac{1}{2}m_\mu \abs*{h}^2,\\
      \KCurrent{X,q,m}[{h}]
      &= \DeformationTensor{X}{}\cdot \EMTensor[h]
      + q \nabla^\alpha{h}\cdot \nabla_\alpha\overline{{h}}
      + \frac{1}{2}\nabla_{m^\sharp}\abs{h}^2
      + \frac{1}{2}\nabla^\alpha (m_\alpha - \p_\alpha q) \abs*{{h}}^2,    
    \end{split}
  \end{equation}
  where
  \begin{equation*}
    \DeformationTensor{X}{_{\mu\nu}} := \frac{1}{2}\left(\nabla_\mu X_\nu + \nabla_\nu X_\mu\right)
  \end{equation*}
  denotes the \emph{deformation tensor} of $X$. 
\end{definition}

\begin{prop}
  \label{nonlinear:prop:energy-estimates:spacetime-divergence-prop}
  Let $X$ denote a sufficiently regular vectorfield on
  $\StaticRegionWithExtension$, a \KdS{} black hole spacetime, and
  $\DomainOfIntegration$ denote the spacetime region bounded by
  $\Sigma_{t_1}, \Sigma_{t_2}, \EventHorizonFuture_-$, and
  $\CosmologicalHorizonFuture_+$. Moreover, denote
  \begin{equation*}
    \EventHorizonFuture_{t_1,t_2} := \EventHorizonFuture_-\bigcap \{t_1\le \tStar\le t_2\},\qquad
    \CosmologicalHorizonFuture_{t_1,t_2} := \CosmologicalHorizonFuture_+\bigcap \{t_1\le \tStar\le t_2\}.
  \end{equation*}
  where we recall the definitions of $\EventHorizonFuture_-,
  \CosmologicalHorizonFuture_+$ in \eqref{nonlinear:eq:extended-horizon-def}.
  
  Then we have the following divergence
  property: 
  \begin{equation*}
    \begin{split}
      -\int_{\DomainOfIntegration} \nabla_g\cdot X ={}\int_{\Sigma_{t_2}}X\cdot n_{\Sigma_{t_2}}  -
      \int_{\Sigma_{t_1}}X\cdot n_{\Sigma_{t_1}} +
      \int_{\EventHorizonFuture_{t_1,t_2}}
      X \cdot n_{\EventHorizonFuture_-}  +
      \int_{\CosmologicalHorizonFuture_{t_1,t_2}} X\cdot n_{\CosmologicalHorizonFuture_+}
      .
    \end{split}
  \end{equation*}
  Here, $n_{\Sigma_t}$ is the future-directed unit normal on
  $\Sigma_t$, and $n_{\Horizon}$ denotes the future-directed timelike
  normal of $\Horizon$. 
\end{prop}

In proving the relevant energy estimates, we will typically apply the
divergence formulas in Proposition
\ref{nonlinear:prop:energy-estimates:spacetime-divergence-prop} using the divergence property
\begin{equation}
  \label{nonlinear:eq:div-them:J-K-currents}
  \nabla_g\cdot \JCurrent{X,q,m}[{h}] = \Re\squareBrace*{
    (X+q)\overline{{h}} \cdot  \ScalarWaveOp[g]{h} 
  }
  + \KCurrent{X,q,m}[{h}].
\end{equation}
We will refer to $X$ and $q$ as the \emph{vectorfield (multiplier)},
and the \emph{Lagrangian correction} respectively. When we want to
emphasize the background on which $\JCurrent{X,q,m}$ or
$\KCurrent{X,q,m}$ are defined, we will use the notation
$\JCurrent{X,q,m}_g$ or $\KCurrent{X,q,m}_g$, where $g$ is the
background metric.

\subsection{The redshift vectorfields $\RedShiftN$}

In this subsection, we recall the construction of the redshift
vectorfield $\RedShiftN$.

\begin{prop}
  \label{nonlinear:prop:redshift:N-construction}
  Let $b=(M, a)$, $|a|\ll M, \Lambda$ be the black hole parameters for
  a \KdS{} black hole $g_b$, and let $\Sigma_{\tStar}$ be a
  $\tStar$-constant uniformly spacelike hypersurface. There exist
  positive constants $c_{\RedShiftN}$ and $C_{\RedShiftN}$, parameters
  $r_{\EventHorizonFuture}< r_0<r_1<R_1<R_0<
  r_{\CosmologicalHorizonFuture}$, and a stationary uniformly
  time-like vectorfield $\RedShiftN$ such that
  \begin{enumerate}
  \item $\KCurrent{\RedShiftN, 0, 0}_{g_b}[{h}] \ge c_{\RedShiftN} \JCurrent{\RedShiftN, 0, 0}_{g_b}[{h}]\cdot
    n_{\Sigma_{\tStar}}$ for
    $r_{\EventHorizonFuture}\le r\le r_0$, and for
    $R_0\le r\le r_{\CosmologicalHorizonFuture}$.
  \item
    $-\KCurrent{\RedShiftN, 0, 0}_{g_b}[{h}] \le C_{\RedShiftN}
    \JCurrent{\RedShiftN, 0, 0}_{g_b}[{h}]\cdot n_{\Sigma_{\tStar}}$ for
    $r_0\le r\le R_0$.
  \item $\RedShiftN=\KillT$ for $r_1\le r\le R_1$.
  \item There exists some $\delta>0$ such that
    \begin{equation}
      \label{nonlinear:eq:RedShiftN-timelike}
      g_b(\RedShiftN, \RedShiftN) < -\delta.
    \end{equation}
  \end{enumerate}
\end{prop}

\begin{proof}
  See Proposition \ref*{linear:prop:redshift:N-construction} in \cite{fang_linear_2021}.
\end{proof}

We use the redshift vectorfield $\RedShiftN$ defined above to
construct higher-order Sobolev spaces. To do so, we will require the
following technical lemma (see Lemma 3.11 of
\cite{warnick_quasinormal_2015} for the anti-de Sitter equivalent)
which constructs the $\RedShiftK_i$ vectorfields.

\begin{lemma}
  \label{nonlinear:lemma:enhanced-redshift:Ka-construction}
  There exists a finite collection of vectorfields
  $\curlyBrace*{\RedShiftK_i}_{i=1}^N$
  with the following properties:
  \begin{enumerate}
  \item $\RedShiftK_i$ are stationary, smooth vectorfields on $\StaticRegionWithExtension$.
  \item Near $\EventHorizonFuture$, $\RedShiftK_1$ is future-oriented null with
    $g(\RedShiftK_1,\HorizonGen_{\EventHorizonFuture}) = -1$, and near
    $\CosmologicalHorizonFuture$, $\RedShiftK_1$ is future-oriented null with
    $g(\RedShiftK_1,\HorizonGen_{\CosmologicalHorizonFuture}) = -1$.
  \item $\RedShiftK_i$ are tangent to both $\EventHorizonFuture$ and
    $\CosmologicalHorizonFuture$ for $2\le i\le N$.
  \item If $X$ is any vectorfield supported in
    $\StaticRegionWithExtension$, then there exist smooth functions
    $x^i$, not necessarily unique, such that
    \begin{equation*}
      X =  \sum_i x^i\RedShiftK_i.
    \end{equation*}
  \item We have the following decomposition of the deformation
    tensor of $\RedShiftK_i$,
    \begin{equation}
      \label{nonlinear:eq:enhanced-redshift:Deform-Tens-decomp}
      \DeformationTensor{\RedShiftK_i}=\sum_{j,k}f^{jk}_i \RedShiftK^\flat_j \otimes_s \RedShiftK^\flat_k,
    \end{equation}
    for stationary functions
    $f^{jk}_i = f^{kj}_i\in C^\infty_c(\StaticRegionWithExtension)$,
    where $X^\flat$ is the canonical one-form associated to the
    vectorfield $X$, and on
    $\Horizon \in \curlyBrace*{\EventHorizonFuture,
      \CosmologicalHorizonFuture}$,
    \begin{align*}
      f^{11}_1 &= \SurfaceGravity_{\Horizon},\\
      f^{11}_i &=0,\quad i\neq 1.
    \end{align*}
  \end{enumerate}
\end{lemma}
\begin{proof}
  See Lemma
  \ref*{linear:lemma:enhanced-redshift:Ka-construction} in \cite{fang_linear_2021}. 
\end{proof}

\subsection{Sobolev spaces}
\label{nonlinear:sec:Sobolev-spaces}

In this section, we define the Sobolev spaces that we make use of
throughout the paper when discussing the Cauchy problem or
establishing energy estimates.
\begin{definition}
  Let $h:\StaticRegionWithExtension\to \Complex^D$. Then denoting by
  $\DomainOfIntegration$ a subset of $\StaticRegionWithExtension$, we
  define the regularity spaces
  \begin{equation*}
    L^2(\DomainOfIntegration) := \curlyBrace*{h: \int_{\DomainOfIntegration}\abs*{h}^2 <\infty}, \qquad
    H^k(\DomainOfIntegration) := \curlyBrace*{h: \RedShiftK^\alpha h\in L^2(\DomainOfIntegration), |\alpha|\le k}. 
  \end{equation*}
\end{definition}

Let us also define two $L^2$ inner products on spacelike
slices. 
\begin{definition}
  We define the $\InducedLTwo$ and $\LTwo$ inner products on the spacelike
  slice $\Sigma$ by
  \begin{equation*}
    \begin{split}
      \bangle{{h}_1, {h}_2}_{\InducedLTwo(\Sigma)}
      &=  \int_{\Sigma} {h}_1 \cdot \overline{h}_2\, \sqrt{\GInvdtdt},\\
      \bangle{{h}_1, {h}_2}_{\LTwo(\Sigma)}
      &= \int_{\Sigma}{h}_1\cdot\overline{h}_2
    \end{split}
  \end{equation*}
  Observe that due to the construction of the $(\tStar, r, \theta,
  \phiStar)$ Kerr-star coordinate system, the two norms are
  equivalent. 
\end{definition}

We likewise have the following higher regularity Sobolev norms.
\begin{definition}
  For any $u:\Sigma\to\Complex^D$, let
  $\upsilon:\StaticRegionWithExtension\to \Complex^D$ be the unique
  lifting satisfying
  \begin{equation}
    \label{nonlinear:eq:stationary-extension-def}
    \upsilon\vert_{\Sigma} = u,\quad \KillT \upsilon = 0.
  \end{equation}  
  Then define the regularity spaces $H^k(\Sigma), \InducedHk{k}(\Sigma)$ by:
  \begin{equation*}
    \begin{split}
      H^k(\Sigma)&:= \curlyBrace*{u:\left.\RedShiftK^\alpha
          \upsilon\right\vert_{\Sigma} \in \LTwo(\Sigma), |\alpha|\le
        k}, \\
      \InducedHk{k}(\Sigma)&:=\curlyBrace*{u:\left.\RedShiftK^\alpha
          \upsilon\right\vert_{\Sigma} \in \InducedLTwo(\Sigma), |\alpha|\le
        k}, 
    \end{split}
  \end{equation*}
  where $\alpha$ is a multi-index and $\RedShiftK_i$ are vectorfields satisfying
  the requirements of Lemma \ref{nonlinear:lemma:enhanced-redshift:Ka-construction}.

  With the same $\RedShiftK_i$, we define the regularity space $\HkWithT{k}(\Sigma)$ by:
  \begin{equation*}
    \HkWithT{k}(\Sigma_{\tStar}) :=\curlyBrace*{h:\left.\RedShiftK^\alpha
        h(\tStar, \cdot)\right\vert_{\Sigma_{\tStar}} \in \InducedLTwo(\Sigma), |\alpha|\le k}. 
  \end{equation*}
\end{definition}
\begin{remark}
  Observe that different choices of the
  family $\curlyBrace{\RedShiftK_i}_{i=1}^N$ will result in different, though
  equivalent, $H^k(\Sigma), \InducedHk{k}(\Sigma), \HkWithT{k}(\Sigma)$ norms.
\end{remark}

We also use the vectorfields defined in Theorem
\ref{nonlinear:lemma:enhanced-redshift:Ka-construction} to define the following
higher regularity Sobolev spaces.
\begin{definition}
  Given some
  $(\psi_0,\psi_1)\in H^k_{\local}(\Sigma, \Complex^D)\times
  H^{k-1}_\local(\Sigma, \Complex^D)$, letting
  $\curlyBrace{\RedShiftK_i}_{i=1}^N$ be as constructed in Lemma
  \ref{nonlinear:lemma:enhanced-redshift:Ka-construction}. We  define
  $\LSolHk{k}(\Sigma)$ to be the space consisting of $(\psi_0,\psi_1)$
  such that
  \begin{equation*}
    \norm{(\psi_0, \psi_1)}^2_{\LSolHk{k}(\Sigma)} :=
    \norm*{\psi_0}^2_{\Hk{k}(\Sigma)} + \norm{\psi_1}^2_{\Hk{k-1}(\Sigma)}.
  \end{equation*}
\end{definition}

We also define the following exponentially weighted Sobolev spaces.
\begin{definition}
  For $\alpha \in \Real$, $h:
  \StaticRegionWithExtension\to\Complex^D$, define the \emph{weighted
    Sobolev norm} with regularity $k$ and decay $\alpha$, 
  \begin{align*}
    \norm{h}_{H^{k,\alpha}(\StaticRegionWithExtension)}^2:=
    \int_0^\infty e^{2\alpha\tStar} \norm*{h}_{\HkWithT{k}(\Sigma_{\tStar})}^2\,d\tStar.
  \end{align*}
  Then define the forcing parameter space of regularity $k$, decay
  $\alpha$, and loss $m$ by
  \begin{equation}
    \label{nonlinear:eq:D-kam-norm}
    \norm{(f,h_0,h_1)}_{D^{k,\alpha, m}(\StaticRegionWithExtension)}
    := \norm{f}_{H^{k+m-1,\alpha}(\StaticRegionWithExtension)} + \norm{(h_0, h_1)}_{\LSolHk{k+m}(\Sigma_0)}.
  \end{equation}
\end{definition}

\subsection{Implicit function theorem}

We provide the explicit statement of the implicit function theorem
that we use in the present paper. 
\begin{theorem}[Theorem A.1.2 in \cite{alinhac_pseudo-differential_2007}]
  \label{nonlinear:thm:IFT}
  Let $X,Y,Z$ be Hilbert spaces. Let $U\subset X, V\subset Y$ be open
  subsets and $\mathcal{F}:U\times V\to Z$ be a $C^1$ mapping. If for
  some $(x_0,y_0)\in U\times V$, the linearization in the first
  argument
  \begin{equation*}
    D_1\mathcal{F}|_{(x_0,y_0)}:X\to Z
  \end{equation*}
  is an isomorphism, then there exist open neighborhoods $U_0, V_0$
  such that $x_0\in U_0\subset U$, $y_0\in V_0\subset V$, and a unique
  $C^1$-mapping $\mathcal{G}:V_0\to U_0$ such that for all $y\in V_0$,
  \begin{equation*}
    \mathcal{F}(\mathcal{G}(y),y)=\mathcal{F}(x_0, y_0),
  \end{equation*}
  and $\mathcal{F}(x, y) = \mathcal{F}(x_0, y_0)$ is equivalent to $x
  = \mathcal{G}(y)$ for $(x, y)\in U_0\times V_0$. 
\end{theorem}

\section{Einstein's equations}
\label{nonlinear:sec:EVE}

In this section, we review the formulation of Einstein's equations in
harmonic gauge and the nonlinear hyperbolic Cauchy problem associated
to Einstein's equations in harmonic gauge. For an in-depth reference
on harmonic coordinates, we refer the reader to Chapter 7 of
\cite{choquet-bruhat_general_2009}. 

\subsection{Harmonic gauge  and the hyperbolic Cauchy
  problem}
\label{nonlinear:sec:HC-Cauchy-problem}

Recall that Einstein's vacuum equations with a cosmological
constant $\Lambda$ for $g$, a $(-, +, +, +)$ Lorentzian metric  on a
smooth manifold $\mathcal{M}$, are
\begin{equation}
  \label{nonlinear:eq:EVE:Full}
  \Ric(g) - \Lambda g= 0.
\end{equation}
For any globally hyperbolic $(\mathcal{M}, g)$ solution to
(\ref{nonlinear:eq:EVE:Full}), and spacelike hypersurface
$\Sigma_0\subset\mathcal{M}$, the induced Riemannian metric $\InducedMetric$
on $\Sigma_0$ and the second fundamental form $k(X,Y)$ of $\Sigma_0$
satisfy the \emph{constraint equations}
\begin{equation}
  \label{nonlinear:eq:EVE:constraint-eqns}
  \begin{split}
    R(\InducedMetric) + (\Trace_{\InducedMetric}k)^2 - \abs{k}_{\InducedMetric}^2 &= -2\Lambda,\\
    \nabla_{\InducedMetric} \cdot k - d \Trace_{\InducedMetric} k &= 0,
  \end{split}
\end{equation}
where $R(\InducedMetric)$ is the scalar curvature of $\InducedMetric$.  The Cauchy
problem for Einstein's equations then asks, given an initial data set
consisting of the triple $(\Sigma_0, \InducedMetric, k)$, where
$\InducedMetric$ is a Riemannian metric on the smooth $3$-manifold
$\Sigma_0$, and $k$ is a symmetric 2-tensor on $\Sigma_0$, for a
Lorentzian 4-manifold $(\mathcal{M}, g)$ and an embedding
$\Sigma_0\hookrightarrow \mathcal{M}$ such that $\InducedMetric$ is
the induced metric on $\Sigma_0$, and $k$ is the second fundamental
form of $\Sigma_0$ in $\mathcal{M}$. We denote initial data triplets
with $(\InducedMetric , k)$ satisfying the constraint equations
\eqref{nonlinear:eq:EVE:constraint-eqns} to be \textit{admissible} initial data
triplets.

It is well-known that (\ref{nonlinear:eq:EVE:Full}) is a quasilinear
second-order partial differential system of equations for the metric
coefficients $g_{\mu\nu}$. In local coordinates, we can write
\eqref{nonlinear:eq:EVE:Full} as
\begin{equation}
  \label{nonlinear:eq:EVE-coords}
  \Ric(g)_{\mu\nu}
  = - \frac{1}{2}g^{\alpha\beta}\p_{\alpha}\p_{\beta}g_{\mu\nu}
  + \p_{(\mu}\Gamma(g)_{\nu)} + \mathcal{N}(g, \p g),\qquad
  \Gamma(g)^\mu:= g^{\alpha\beta}\ChristoffelTypeTwo[g]{\mu}{\alpha\beta},
\end{equation}
where the nonlinear term $\mathcal{N}(g, \p g)$ involves at most one
derivative of $g$. As a result of the presence of the
$\nabla_{(\mu}\Gamma(g)_{\nu)}$ term EVE lacks any useful
structure. However, as was first demonstrated by Choquet-Bruhat, this
problem can be overcome by using the general covariance of Einstein's
equations and choosing \textit{wave coordinates}, also referred to as
\textit{harmonic coordinates}. With this choice
Einstein's equations become a quasilinear hyperbolic system of
equations
\cite{choquet-bruhat_theoreme_1952,choquet-bruhat_global_1969}.

We first introduce the harmonic coordinate condition. 
\begin{definition}
  Define the \emph{(gauge) constraint operator}
  \begin{equation*}
    \Constraint(g, g^0)_\mu = g_{\mu\chi}g^{\nu\lambda}\left(\ChristoffelTypeTwo[g]{\chi}{\nu\lambda} - \ChristoffelTypeTwo[g^0]{\chi}{\nu\lambda} \right),
  \end{equation*}
  where $g^0$ is a fixed background metric which solves Einstein's
  equations. 
  Then we say that a Lorentzian metric $g$ satisfies the
  \emph{harmonic coordinate condition} (with respect to $g^0$) if
  \begin{equation}
    \label{nonlinear:eq:GHC-condition:Christoffel}
    \Constraint(g, g^0) = 0.
  \end{equation}
\end{definition}
\begin{remark}
  Instead of vanishing right-hand side, if we instead set the
  right-hand side equal to some one-form $\bH(g)$ depending on $g$ but
  not its derivatives, we would obtain the \emph{generalized harmonic
    gauge}\footnote{ For more applications of generalized harmonic
    coordinates we refer the reader to
    \cite{johnson_linear_2018,huneau_stability_2018}}.
\end{remark}
\begin{remark}
  For any given Lorentzian metric $g$, finding the appropriate
  diffeomorphism $\phi$ such that $g^{\Upsilon}[g^0] := \phi^*g$
  satisfies the harmonic coordinate condition with respect to $g^0$
  reduces to solving a semilinear wave equation, and thus the problem
  of finding such a $\phi$ is locally well-posed.
\end{remark}

Throughout the remainder of the paper, it will be convenient to pick
$g^0$ to be the \KdS{} background metric around which we linearize.
\begin{definition}
  Define the \emph{linearized constraint}
  \begin{equation}
    \label{nonlinear:eq:linearized-wave-constraint-def}
    \Constraint_{g_b}h:=D_{g_b}\Constraint(g_b+h, g_b)(h) = -\nabla_{g_b}\cdot\TraceReversal_{g_b}h,
  \end{equation}
  where
  \begin{align*}
    \TraceReversal_{g}h &:= h - \frac{1}{2}(\Trace_g h)g, 
  \end{align*}
  denotes the \emph{trace reversal operator} of $g$.
  The \emph{linearized (wave coordinate) constraint condition} is
  \begin{equation}
    \label{nonlinear:eq:linearized-wave-constraint-eqn}
    \Constraint_{g_b}h=0.
  \end{equation}
\end{definition}

\begin{lemma}
  \label{nonlinear:lemma:harmonic-coordinates:EVE-quasilinear}
  Let $g$ be a Lorentzian metric satisfying the harmonic coordinate
  condition in \eqref{nonlinear:eq:GHC-condition:Christoffel} with respect to a
  fixed background metric $g^0$. Then, there exists some nonlinear
  function $\mathcal{Q}(\cdot, \cdot)$ such that
  \begin{equation}
    \label{nonlinear:eq:harmonic-coodinates:Ric-quasilinear}
    \Ric(g) - \Lambda g = -\frac{1}{2}\ScalarWaveOp[g]g + \mathcal{Q}(g, \p g),
  \end{equation}
  where we emphasize that $\mathcal{Q}$ is independent of the second
  derivatives of $g$. 
\end{lemma}
\begin{proof}
  If $g$ satisfies $\Constraint(g, g^0)=0$, then using
  \eqref{nonlinear:eq:EVE-coords}, we can write that
  \begin{equation*}
    \Ric(g)_{\mu\nu} = -\frac{1}{2}g^{\alpha\beta}\p_\alpha\p_\beta g_{\mu\nu}
    + \p_{(\mu}\left(g_{\nu)\chi}g^{\alpha\beta}\ChristoffelTypeTwo[g^0]{\chi}{\alpha\beta}\right) + \NCal(g,\p g).
  \end{equation*}
  Since $g^0$ is simply some prescribed background metric, we observe
  that the second term on the right hand side is itself now semilinear,
  and we can write in local coordinates
  \begin{equation}
    \label{nonlinear:eq:Ric-wave-coords}
    \Ric(g)_{\mu\nu} = -\frac{1}{2}g^{\alpha\beta}\p_\alpha\p_\beta g_{\mu\nu} + \mathcal{Q}(g,\p g),
  \end{equation}
  where
  \begin{equation}
    \label{nonlinear:eq:QCal-def}
    \mathcal{Q}(g, \p g):= \p_{(\mu}\left(g_{\nu)\chi}g^{\alpha\beta}\ChristoffelTypeTwo[g^0]{\chi}{\alpha\beta}\right) + \NCal(g,\p g). 
  \end{equation}
  From \eqref{nonlinear:eq:Ric-wave-coords}, it is clear that $\Ric$ is a
  quasilinear hyperbolic operator. Observing that the scalar wave
  operator for a metric $g$ satisfying the harmonic gauge condition
  \eqref{nonlinear:eq:GHC-condition:Christoffel} can be expressed as
  \begin{equation*}
    \ScalarWaveOp[g]:= \nabla^\alpha \p_\alpha = g^{\alpha\beta}\p_\alpha\p_\beta
    - g^{\alpha\beta}\ChristoffelTypeTwo[g^0]{\sigma}{\alpha\beta}\p_\sigma,
  \end{equation*}
  we can then conclude. 
\end{proof}

Crucial to the utility of harmonic coordinates is that
they are propagated by a hyperbolic operator.
\begin{definition}
  Given a smooth one-form $\psi$, we define the
  \emph{constraint propagation operator},
  \begin{equation*}
    \ConstraintPropagationOp_g\psi :=- 2\nabla_g\cdot\TraceReversal_g\nabla_g\otimes\psi = \VectorWaveOp[g]\psi - \Ric(g)(\psi, \cdot),
  \end{equation*}
  where
  $\nabla_g\otimes\psi = -\frac{1}{2}\LieDerivative_{\psi^\sharp}g$ is
  the \emph{symmetric gradient} of $g$, $\psi^\sharp$ is the canonical
  vectorfield associated to the one-form $\psi$, and
  $\VectorWaveOp[g]= \nabla^\alpha\nabla_\alpha$ denotes the wave
  operator acting on one-tensors. Observe that
  $\ConstraintPropagationOp_g$ is a manifestly hyperbolic operator.
\end{definition}

\begin{lemma}
  \label{nonlinear:lemma:EVE:nonlinear-constraint-prop}
  Any solution $g$ to
  \begin{equation*}
    \Ric(g) - \Lambda g - \nabla_g\otimes\Constraint(g,g^0) =0
  \end{equation*}
  must also satisfy
  \begin{align*}
    \ConstraintPropagationOp_g\Constraint(g,g^0)= 0.
  \end{align*}
\end{lemma}
\begin{proof}
  The conclusion follows directly by applying the twice contracted
  second Bianchi identity to the equation.
\end{proof}

The fact that the gauge constraints are propagated by a hyperbolic
operator allows us to conclude that solving EVE in harmonic gauge
is equivalent to solving the ungauged EVE provided that the initial
data satisfies the gauge constraint. 
\begin{prop}
  \label{nonlinear:prop:EVE:GHC-EVE-equivalence}
  Let $g$ be a solution to the
  Cauchy problem
  \begin{equation}
    \label{nonlinear:eq:EVE:gauged:aux}
    \begin{split}
      \frac{1}{2} g^{\alpha\beta}\p_\alpha\p_\beta g_{\mu\nu} + \Lambda g_{\mu\nu} ={}& \NCal_{g^0}(g, \p g)_{\mu\nu}\\
      \gamma_0(g)={}& (g_0, g_1),
    \end{split}
  \end{equation}
  where
  \begin{equation}
    \label{nonlinear:eq:gamma-def}
    \gamma_{\tStar}(g):= \left(g\vert_{\Sigma_{\tStar}}, \LieDerivative_{x^0}g\vert_{\Sigma_{\tStar}}\right).
  \end{equation}
  If $g$ satisfies
  \begin{equation*}
    \Constraint(g, g^0)\vert_{\Sigma_0} = 0;
  \end{equation*}
  and $(\InducedMetric, k)$, the induced metric and second
  fundamental form respectively by $g$ on $\Sigma_0$, satisfy the
  constraint conditions \eqref{nonlinear:eq:EVE:constraint-eqns}; then in fact
  \begin{equation*}
    \Constraint(g, g^0) = 0
  \end{equation*}
  uniformly, and $g$ is a solution to (\ref{nonlinear:eq:EVE:Full}) with
  admissible initial data posed by $(\Sigma_0, \InducedMetric, k)$.
\end{prop}

\begin{proof}
  This follows directly from the semilinear nature of the constraint
  propagation operator and uniqueness of solutions for semilinear
  hyperbolic PDEs.
\end{proof}

It will be convenient to rewrite \eqref{nonlinear:eq:EVE:gauged:aux} as a
quasilinear system in terms of the metric perturbation.

\begin{lemma}
  \label{nonlinear:lemma:EVE:GHC-quasilinear}
  Let $g = g_b + h$ be a solution to the Einstein vacuum equations in
  harmonic gauge with respect to a background metric $g^0$. If we
  choose $g^0 = g$, then linearizing around $g$, we find that ${h}$
  solves
  \begin{equation}
    \label{nonlinear:eq:EVE:quasilinear-system:decay}
    \LinEinstein_{g_b} h = \NCal_{g_b}(h, \p h, \p\p h),
  \end{equation}
  where $\NCal_{g_b}(h, \p h, \p\p h)$ is a quasilinear term in $h$, and
  \begin{equation}
    \label{nonlinear:eq:EVE:LinEinstein-def}
    \LinEinstein_{g_b}{h} = \TensorWaveOp[g_b]{h} + \mathcal{R}_{g_b}({h}), 
  \end{equation}
  where $\TensorWaveOp[g_b] = \nabla^\alpha\nabla_\alpha$ denotes the
  wave operator on $g_b$ acting on symmetric two-tensors.
  We will often refer to $\LinEinstein_{g_b}$ as the
  \emph{gauged linearized Einstein operator}. 

  In particular, $h$ also solves the system
  \begin{equation}
    \label{nonlinear:eq:EVE:quasilinear-system:energy}
    \LinEinsteinS_{g}h = \mathcal{Q}_{g}(h, \p h),
  \end{equation}
  where $\mathcal{Q}_{g}(h, \p h)$ is a semilinear term in $h$ at
  least quadratic in its arguments, and
  \begin{equation}
    \label{nonlinear:eq:EVE:quasilinear-LinEinstein-def}
    \LinEinsteinS_{g}h = \ScalarWaveOp[g]{h} + \SubPOp_{g}[h] + \PotentialOp_{g}[h], 
  \end{equation}
  where $\SubPOp_{g}$ is a vectorfield-valued matrix,
  $\PotentialOp_g$ is a function-valued matrix and both $\SubPOp_g$
  and $\PotentialOp_g$ have coefficients depending on at most one
  derivative of $g$, and
  \begin{equation*}
    \mathbf{L}_{g_b} = \LinEinstein_{g_b}.
  \end{equation*}
\end{lemma}
\begin{proof}
  The linearization in \eqref{nonlinear:eq:EVE:quasilinear-system:decay} follows
  from equation (2.4) in \cite{graham_einstein_1991}. Equation
  \eqref{nonlinear:eq:EVE:quasilinear-system:energy} then follows from Lemma
  \ref{nonlinear:lemma:harmonic-coordinates:EVE-quasilinear}, observing that the
  only quasilinear terms in
  \eqref{nonlinear:eq:harmonic-coodinates:Ric-quasilinear} are the quasilinear
  terms involved in the scalar wave.
\end{proof}

\subsection{Initial data}

In this section, we will construct the mapping $i_{b, \phi}$ between
admissible initial data triplets $(\Sigma_0, \InducedMetric_0, k_0)$
for the ungauged Einstein equations and the admissible initial data
$(g_0, g_1)$ for the Cauchy problem for the gauged Einstein
equations. An important property of this mapping is that a
metric $g$ such that
$(g\vert_{\Sigma_0}, \LieDerivative_{\KillT}g\vert_{\Sigma_0})=(g_0,
g_1)$ satisfies the gauge constraint
$\Constraint(g, g^0)\vert_{\Sigma_0} = 0$. To construct $i_{b, \phi}$
we need to specify a choice of $g^0$. As previously mentioned,
it will be convenient to choose $g^0 = g_b$.

Consider the \KdS{} initial data triplet
$(\Sigma_0, \InducedMetric_b, k_b)$ that launches $g_b$. That is, let
$\InducedMetric_b$ and $k_b$ denote the induced metric and second
fundamental form on $\Sigma_0$ by $g_b$.  We will construct
$i_{b, \phi}$ mapping $(\Sigma_0, \InducedMetric_b, k_b)$ into Cauchy
data for the gauged Einstein vacuum equations launching the \KdS{}
solution $\phi^*g_b$. That is, we will have that
\begin{equation*}
  i_{b, \phi}(\phi^*\InducedMetric_b, \phi^*k_b) = (0,0).
\end{equation*}
The linearization of this mapping will also produce the correctly
gauged initial data for the gauged linearized Einstein equation.

\begin{prop}
  \label{nonlinear:prop:initial-data:ib-construction}
  Fix a diffeomorphism $\phi$. Then   
  for each $b\in \BHParamNbhd$, there exists a map
  \begin{equation*}
    \begin{split}
      i_{b, \phi}:& H^{\BSTopLvl}(\Sigma_0; S^2T^*\Sigma_0)\times H^{\BSTopLvl-1}(\Sigma_0;S^2T^*\Sigma_0)
      \to H^{\BSTopLvl}(\Sigma_0; S^2T^*_{\Sigma_0}\StaticRegionWithExtension)\times H^{\BSTopLvl-1}(\Sigma_0;S^2T^*_{\Sigma_0}\StaticRegionWithExtension),
    \end{split}
  \end{equation*}
  that is smooth for $\BSTopLvl\ge 1$ depending smoothly on $b$, such that 
  \begin{enumerate}
  \item if $h$ is some symmetric two-tensor such that
    $i_{b, \phi}(\InducedMetric_0, k_0) = \gamma_0(h)$, then 
    \begin{equation*}
      (\InducedMetric, k) = (\InducedMetric_0, k_0),
    \end{equation*}
    where $(\InducedMetric, k)$ are the induced metric and the second
    fundamental form respectively of $\phi(\Sigma_0)$ induced by
    $g = \phi^*(g_b+h)$. Moreover, $g_b+h$ satisfies the gauge constraint
    \begin{equation*}
      \Constraint(g_b+h,g_b)\vert_{\Sigma_0} = 0;
    \end{equation*}
  \item if $(\InducedMetric_b, k_b)$ is the admissible initial data launching the \KdS{}
    metric $g_b$, then
    \begin{equation*}
      i_{b, \phi}(\phi^*\InducedMetric_b, \phi^*k_b)  = (0, 0);
    \end{equation*}
  \item if $\phi = e^{\vartheta}$, for some smooth vectorfield
    $\vartheta$, then $(g_0, g_1) = i_{b, \phi}(\InducedMetric, k)$
    satisfies the condition
    \begin{align*}
      \norm{(g_0, g_1)}_{\LSolHk{k}(\Sigma_0)}
      \lesssim{}&  \sum_{0\le |I|\le k}\norm*{\p_{x}^I(\InducedMetric - \InducedMetric_b)}_{L^2(\Sigma_0)}
      + \sum_{0\le |I|\le k+1}\norm*{\p_x^I\vartheta}_{L^2(\Sigma_0)}\\
      &+ \sum_{0\le |I|\le k-1}\norm*{\p_x^I (k - k_b)}_{L^2(\Sigma_0)},
    \end{align*}
    where $I$, $J$ are multi-indexes, and $k\le \BSTopLvl$.
  \end{enumerate}
\end{prop}
\begin{proof}
  See Proposition \ref*{linear:prop:initial-data:ib-construction}
  of \cite{fang_linear_2021}. 
\end{proof}

\section{Main theorem}
\label{nonlinear:sec:main-thm}
In this section, we present the convention regarding smallness
constants, the statement of the main theorem, as well
as a basic outline of the strategy.

\subsection{Smallness constants}

We introduce the following constants involved in the main theorem and
its ensuing proof.
\begin{itemize}
\item The black hole parameters $b^0= (M^0, a^0)$ represents the mass
  and angular momentum respectively of the initial \KdS{} spacetime
  which we perturb.
\item The integers $\BSTopLvl$ represents the maximum number of
  derivatives for our high regularity exponential growth bootstrap
  assumption of the solution, and $\BSLowLvl$ represents the maximum
  number of derivatives for the low-regularity exponential decay
  bootstrap assumption.
\item The smallness of the perturbation of the initial data norm from
  $g_{b^0}$ is measured by $\InitDataSize$.
\item $\BSDDecay$ and $\BSEGrowth$ measure the bootstrap rate of
  low-regularity exponential decay and high-regularity exponential
  growth respectively.
\item The size of the bootstrap norms are controlled by
  $\BSConstant$.
\end{itemize}

In what follows, $b^0$ is fixed, $\BSEGrowth, \BSDDecay$
will be chosen such that
\begin{equation*}  
  0\le \abs*{a^0}\ll \min{\BSDDecay,\BSEGrowth, M^0, \Lambda, 1}. 
\end{equation*}
We will choose $\BSEGrowth, \BSDDecay , \BSTopLvl, \BSLowLvl$, subject
to the condition that
\begin{equation*}
  2\BSEGrowth < (\BSTopLvl-\BSLowLvl-2)\BSDDecay, \qquad  \BSTopLvl-\BSLowLvl>2,\qquad \BSLowLvl>\frac{5}{2}. 
\end{equation*}
We will also choose $\InitDataSize, \BSConstant$ such that
\begin{equation}
  \label{nonlinear:eq:smallness}
  \begin{split}
    \InitDataSize, \BSConstant&\ll \min{ M^0, \Lambda,
      1},\\
    \InitDataSize, \BSConstant &\ll \abs*{a^0}, \quad \text{if
    }a^0\neq 0,
  \end{split} 
\end{equation}
and
\begin{equation*}
  \BSConstant = \InitDataSize^{\frac{2}{3}}.
\end{equation*}
\begin{remark}
  Observe that we may always assume \eqref{nonlinear:eq:smallness}, even if
  $0<|a^0|\lesssim \varepsilon_0$. In that case, the initial data
  continues to satisfy a smallness estimate of the form
  \begin{equation*}
    \norm*{(\InducedMetric_0, k_0)
      - (\InducedMetric_{b^0}, k_{b^0})}_{H^{\BSTopLvl}(\Sigma_0)\times H^{\BSTopLvl-1}(\Sigma_0)}
    \lesssim \varepsilon_0
  \end{equation*}
  by setting $b^0 = (M^0, 0)$. See Remark 3.4.1 in
  \cite{klainerman_kerr_2021} for a similar observation in the
  $\Lambda=0$ case.  
\end{remark}
In what follows, we will use $\lesssim$ to denote a quantity bounded
by a constant depending only on universal geometric constants as well
as the constants $M^0, a^0, \Lambda$ but not on $\InitDataSize$ or
$\BSConstant$.

\subsection{Statement of the main theorem}

We are now ready to state the main theorem. 
\begin{theorem}
  \label{nonlinear:thm:main}
  Fix slowly-rotating \KdS{} parameters $b^0 = (M^0,
  a^0)$. 
  Let $(\Sigma_0, \InducedMetric_0, k_0)$ be an admissible initial data
  triplet for EVE with cosmological constant $\Lambda>0$, such that
  \begin{equation}
    \label{nonlinear:eq:main:init-data-smallness}
    \norm*{(\InducedMetric_0, k_0)
      - (\InducedMetric_{b^0}, k_{b^0})}_{H^{\BSTopLvl}(\Sigma_0)\times H^{\BSTopLvl-1}(\Sigma_0)}
    \le \varepsilon_0
  \end{equation}
  where $(\InducedMetric_{b^0}, k_{b^0})$ is the initial data induced
  by $g_{b^0}$ on $\Sigma_0$, and $\BSTopLvl= 6$. Then there exist
  slowly-rotating \KdS{} black hole parameters
  $b_{\infty} = (M_\infty, a_\infty)\in B$, a diffeomorphism
  $\phi_\infty=e^{i_\Theta\vartheta_\infty}$, and a
  metric perturbation $h\in S^2T^*\StaticRegionWithExtension$ such
  that the 2-tensor $g$ satisfying
  \begin{equation*}
    g = \phi^*_\infty(g_{b_\infty} + h),
  \end{equation*}
  is a solution of the Einstein vacuum equations
  \begin{equation*}
    \Ric(g) - \Lambda g = 0, \qquad (\InducedMetric, k) = (\InducedMetric_0, k_0),
  \end{equation*}
  where $\InducedMetric, k$ are the induced metric and second
  fundamental form of $\Sigma_0\subset
  \StaticRegionWithExtension$ by $g$. 
  Moreover, $g_{b_\infty}+h$ is in
  harmonic gauge with respect to $g_{b_\infty}$, so that
  \begin{equation*}
    \Constraint(g_{b_\infty}+h, g_{b_\infty}) = 0
  \end{equation*}
  uniformly on $\StaticRegionWithExtension$.  Furthermore, there
  exists some $\BSDDecay>0$ such that $h$ satisfies the pointwise
  estimates
  \begin{equation*}
    \sup_{(\tStar,x)\in\StaticRegionWithExtension} e^{\BSDDecay \tStar}\abs*{h(\tStar, x)} \lesssim \varepsilon_0, 
  \end{equation*}
  where $\tStar$ takes values in $[0,+\infty)$ in
  $\StaticRegionWithExtension$.
\end{theorem}

\subsection{Strategy of proof}

The proof of Theorem \ref{nonlinear:thm:main} will be provided in Section
\ref{nonlinear:sec:bs}. We will employ a bootstrap scheme with three bootstrap
assumptions (see Section~\ref{nonlinear:sec:BA-assumptions:specific}):
\begin{enumerate}
\item A bootstrap assumption of integrated (small) exponential growth
  at the level of high regularity norms of the solution, which we call
  the energy bootstrap assumption.
\item A bootstrap assumption of exponential decay at the
  level of low regularity norms of the solution, which we call the
  decay bootstrap assumption.
\item A bootstrap assumption on the existence of a small gauge choice
  such that an orthogonality condition is satisfied which avoids
  certain linear obstacles to decay. 
\end{enumerate}
\begin{remark}
  These bootstrap assumptions are imposed on solutions of a semi-global
  extension of EVE with a gauge condition enforcing a certain
  orthogonality condition that avoids finitely many linear obstacles to
  decay (see Section \ref{nonlinear:sec:lin-theory} for a more in-depth
  discussion). The introduction of a semi-global extension of EVE is
  done so that we are allowed to use the linear theory developed in
  \cite{fang_linear_2021}, where certain global assumptions are needed.  
\end{remark}

To prove Theorem \ref{nonlinear:thm:main}, it then suffices to improve each of the
bootstrap assumptions, and show that the bootstrap time can be
extended.

\begin{enumerate}
\item We improve the low-regularity exponential decay bootstrap
  assumption, we directly apply the linear theory of
  \cite{fang_linear_2021} on a fixed \KdS{} background, recalled in Section
  \ref{nonlinear:sec:lin-theory:decay}, by treating all the
  nonlinear terms, including the quasilinear terms, as forcing terms
  on the right-hand side. This results in estimates which lose
  derivatives. This is handled by an interpolation argument, using the
  fact that the high-regularity bootstrap
  assumption assumes only very slow exponential growth. This is done
  in Proposition \ref{nonlinear:prop:BA-D:close}.
\item To improve the high-regularity integrated exponential growth
  bootstrap assumptions, we use a weak Morawetz estimate which does
  not lose derivatives. This estimate is stated in Proposition
  \ref{nonlinear:prop:BA-E:lossless-exp-grow-ILED}, and is a perturbation of the
  high-frequency Morawetz estimate proven in
  \cite{fang_linear_2021}. It in turn is proven in Section
  \ref{nonlinear:sec:energy-perturb-kds}.  The actual improvement of the
  high-regularity exponential growth bootstrap assumption is then
  carried out in Proposition \ref{nonlinear:prop:BA-E:close}. Here, we no longer
  treat the quasilinear terms as forcing terms since we can not lose
  derivatives in this step.
\item To improve the bootstrap assumption on the gauge, we
  use the implicit function theorem. This is done in Proposition 
  \ref{nonlinear:prop:BA-gauge-improve}. 
\item To extend the gauge, we use the continuity of the flow and
  again rely on the implicit function theorem. This is done in Proposition
  \ref{nonlinear:prop:BA-gauge-extend}. 
\end{enumerate}
The proof of Theorem \ref{nonlinear:thm:main} is then finished in
Section \ref{nonlinear:sec:closing-proof}.

\section{Linear Theory}
\label{nonlinear:sec:lin-theory}

In this section, we state the results from the linear theory needed for
the proof of Theorem \ref{nonlinear:thm:main}.

\subsection{Exponential decay on exact \KdS{} spacetimes}
\label{nonlinear:sec:lin-theory:decay}
We introduce some of the key conclusions of the linear theory in the
companion paper in \cite{fang_linear_2021}. 

One of the main conclusions of the linear theory was that solutions to
the linearized Einstein equations decay exponentially, up to a finite
number of obstacles. These finite obstacles are
$\QNMk{k}(\LinEinstein_{g_b}, \UpperHalfSpace)$, the non-decaying
$\LSolHk{k}$-\emph{quasinormal modes} of $\LinEinstein_{g_b}$.
\begin{corollary}[See Corollary \ref{linear:coro:lambda-IVP} in
  \cite{fang_linear_2021}]
  \label{nonlinear:coro:lambda-map}
  Let $k\ge 3$, and let
  $\ZCal\subset D^{k,\SpectralGap}(\StaticRegionWithExtension)$ be a
  finite-dimensional linear subspace such that the map
  \begin{equation}
    \label{nonlinear:eq:lambda-map:z-cal-map}
    \ZCal\ni (\tilde{f}, \tilde{h}_0, \tilde{h}_1)\mapsto
    \bangle*{\frac{1}{\ImagUnit}
      \begin{pmatrix}
        \delta_0 \tilde{h}_0\\
        -\GInvdtdt_b\tilde{f} + \delta_0 \tilde{h}_1
      \end{pmatrix},
      \cdot
    }\in \mathcal{L}(\QNMk{k*}(\LinEinstein_{g_b}, \UpperHalfSpace), \overline{\Complex}),
  \end{equation}
  mapping from $\ZCal$ to the space of linear functionals on the dual
  $\LSolHk{k}$-quasinormal modes is bijective.
  Then there exists a 
  continuous linear map
  \begin{equation}
    \label{nonlinear:eq:lambda-def}
      \lambda_{\ZCal}: D^{k,\SpectralGap,1}(\StaticRegionWithExtension)
       \to \ZCal
  \end{equation}
  such that if
  $D^{k,\SpectralGap, 1}\ni\lambda_{\ZCal}(f, h_0, h_1) = z = (\tilde{f}, \tilde{{h}}_0,
  \tilde{{h}}_1) \in \ZCal$, then the initial
  value problem
  \begin{equation*}
    \begin{split}
      \LinearOp {h} &= f + \tilde{f},\\
      \gamma_0({h}) &= ({h}_0 + \tilde{{h}}_0, {h}_1 + \tilde{{h}}_1)
    \end{split}
  \end{equation*}
  has an exponentially decaying solution ${h}$ that satisfies the
  estimate
  \begin{equation}
    \label{nonlinear:eq:lambda-IVP-solution-inequality}
    \norm{h}_{\HkWithT{k}(\Sigma_{\tStar})}
    \lesssim e^{-\SpectralGap \tStar} \norm{(f, {h}_0, {h}_1)}_{D^{k+1,\SpectralGap}(\StaticRegionWithExtension)}. 
  \end{equation}
  If moreover,
  $\lambda_{\ZCal}$ is bijective, then $z$ is unique and the map $(f,
  {h}_0, {h}_1) \to z$ is continuous.
\end{corollary}

The following proposition corresponds to
Corollary \ref*{linear:coro:lambda-IVP} of \cite{fang_linear_2021}
applied to the case where\footnote{In the context of
  \cite{fang_linear_2021}, Corollary \ref*{linear:coro:lambda-IVP} is
  shown for a strongly hyperbolic operator $\LinEinstein$ with a
  discrete quasinormal spectrum satisfying certain high frequency
  resolvent estimates. That $\LinEinstein_{g_b}$ satisfies these
  conditions is shown in Lemma \ref*{linear:lemma:EVE:GHC-quasilinear},
  Theorems \ref*{linear:thm:meromorphic:main-A} and
  \ref*{linear:thm:resolvent-estimate:inf-gen} in
  \cite{fang_linear_2021}.} $\LinearOp=\LinEinstein_{g_b}$ (see also
Proposition 5.7 in \cite{hintz_global_2018}).
\begin{prop}
  \label{nonlinear:prop:orthogonality-condition}
  Fix $g_b$ a slowly-rotating \KdS{} black hole. Then there exist some
  $\SpectralGap>0$ and linear maps
  \begin{equation*}
    \lambda_{\Upsilon}[g_b]: D^{k,\SpectralGap, 1}(\StaticRegionWithExtension)\to \Real^{N_\Upsilon},\qquad
    \lambda_{\Constraint}[g_b]: D^{k,\SpectralGap, 1}(\StaticRegionWithExtension)\to \Real^{N_\Constraint},
  \end{equation*}
  such that for
  $(f, h_0, h_1)\in D^{k,\SpectralGap,
    1}(\StaticRegionWithExtension)$, where $k\ge 3$,
  if
  $\lambda_{\Upsilon}[g_b](f, h_0, h_1) =
  \lambda_{\Constraint}[g_b](f, h_0, h_1) =0$, then for any
  $0<\BSDDecay<\SpectralGap$, a solution $h$ to the Cauchy problem
  \begin{equation}
    \label{nonlinear:eq:lin-theory:Cauchy-problem}
    \begin{split}
      \LinEinstein_{g_b}h &= f,\\
      \gamma_0(h) &= (h_0, h_1),
    \end{split}    
  \end{equation}
  where $\LinEinstein_{g_b}$ is as defined in \eqref{nonlinear:eq:EVE:LinEinstein-def},
  satisfies the decay estimate
  \begin{equation}
    \label{nonlinear:eq:linear-decay-statement}
    \sup_{\tStar\ge 0}e^{\BSDDecay\tStar}\norm{h}_{\HkWithT{k}(\Sigma_{\tStar})} \lesssim \norm*{(f, h_0, h_1)}_{D^{k, \BSDDecay,1}(\StaticRegionWithExtension)}.
  \end{equation}
\end{prop}
\begin{remark}
  \label{nonlinear:rmk:threshold-regularity}
  The appearance of the requirement that $(f,h_0,h_1)\in
  D^{k,\SpectralGap,1}(\StaticRegionWithExtension)$ for $k\ge 3$ has
  to do with the threshold regularity level in
  \cite{fang_linear_2021}. For slowly-rotating \KdS{} black holes, it
  is sufficient to take $k>2$. Since we are only working with integer
  Sobolev spaces, and since we will in any case always work with $(f,h_0,h_1)\in
  D^{3,\SpectralGap,1}(\StaticRegionWithExtension)$ in
  this paper, we make no further attempts to optimize this. 
\end{remark}
The finite number of obstacles to exponential decay of the Cauchy
problem defined by $(f, h_0, h_1)$ for the linearized Einstein
operator are characterized by the condition in Proposition
\ref{nonlinear:prop:orthogonality-condition} that
$(f, h_0,h_1) \in \ker \lambda_{\Upsilon}[g_b]\bigcap
\ker\lambda_{\Constraint}[g_b]$. \textit{A priori}, these finite obstacles
could actually grow exponentially, and thus, for general solutions to
the Cauchy problem generated by $(f, h_0, h_1)$ in
\eqref{nonlinear:eq:lin-theory:Cauchy-problem}, it is not possible to enforce
exponential decay. What makes the result of Proposition
\ref{nonlinear:prop:orthogonality-condition} useful then is that in considering
Einstein's equations, we do not consider a general Cauchy problem but
one satisfying certain constraints. These constraints manifest
themselves in the nature of the mappings $\lambda_{\Upsilon}$ and
$\lambda_{\Constraint}$. 

In particular, the following proposition states that the kernel of
$\lambda_{\Constraint}[g_b]$ consists only of solutions that do not
satisfy the linearized wave coordinate constraint condition in
\eqref{nonlinear:eq:linearized-wave-constraint-eqn}. This is the content of
Proposition~\ref*{linear:prop:KdS-QNM-perturb} in \cite{fang_linear_2021}.
\begin{prop}
  \label{nonlinear:prop:lambda-kernel}
  Fix $g_b$ a slowly-rotating \KdS{} metric, and
  let $h$ be the solution to the Cauchy problem in
  \eqref{nonlinear:eq:lin-theory:Cauchy-problem}, for some $(f, h_0, h_1)\in
  D^{k,\SpectralGap,1}(\StaticRegionWithExtension), k>0$.
  
  If there exists some $\TStar$ such that
  \begin{equation*}
    f(\tStar, \cdot) = \Constraint_{g_b}(h)(\tStar, \cdot) = 0, \quad \tStar>\TStar,
  \end{equation*}
  where $\Constraint_{g_b}(h)$ is as defined in
  \eqref{nonlinear:eq:linearized-wave-constraint-def}, then
  \begin{equation*}
    (f, h_0, h_1)\in \ker(\lambda_{\Constraint}[g_b]).
  \end{equation*}
\end{prop}
Proposition \ref{nonlinear:prop:lambda-kernel} implies that some of the finite
number of linear obstacles to decay present in Proposition
\ref{nonlinear:prop:orthogonality-condition} are solutions to the linearized
Einstein equations in harmonic gauge which themselves do not satisfy
the linearized harmonic gauge condition. These solutions are
unphysical, and are avoided in when considering the linearized
Einstein equations in linearized harmonic gauge with data respecting
the linearized gauge constraint.

To handle the remaining obstacles to decay, we have the following
proposition, which amounts to a mode stability statement. This is the
content of Proposition \ref*{linear:prop:linear-stab:KdS-robust} in
\cite{fang_linear_2021}.
\begin{prop}
  \label{nonlinear:prop:lambda-isomorphism}
  
  There exists an $N_{\Theta}$-dimensional family of non-exponentially-decaying
  vectorfields $\Theta$, parametrized by
  \begin{equation}
    \label{nonlinear:eq:i-Theta:def}
    i_{\Theta}:\Real^{N_{\Theta}}\to \Theta
  \end{equation}
  such that for any $i_{\Theta}\vartheta$,
  $\vartheta\in\Real^{N_{\Theta}}$, for any slowly-rotating \KdS{}
  metric $g_b$, 
  \begin{equation*}
    \LinEinstein_{g_b}\left(\nabla_{g_b}\otimes i_{\Theta}\vartheta\right)=0,
  \end{equation*}
  so that $\nabla_{g_b}\otimes i_{\Theta}\vartheta$ is a non-decaying mode
  solution of $\LinEinstein_{g_b}$. 
  
  Moreover, with $\lambda_{\Upsilon}[g_b]$ as in Proposition
  \ref{nonlinear:prop:orthogonality-condition}, $\lambda_{\Upsilon}$ is an
  isomorphism
  \begin{equation*}
    \lambda_{\Upsilon}[g_b]:\curlyBrace*{
      \left(0, \gamma_0\left((g_b')^\Upsilon(b') + \nabla_{g_b}\otimes i_{\Theta}\vartheta'\right)\right):(
      b', \vartheta')\in \Real^4\times \Real^{N_{\Theta}}
    } \to \Real^{N_{\Upsilon}},
  \end{equation*}
  where $N_{\Upsilon} = 4+N_{\Theta}$, and
  \begin{equation}
    \label{nonlinear:eq:g-linearized-wave-gauge:def}
    (g_{b}')^\Upsilon(b') := \frac{\p g_{b}}{\p b}b' + \nabla_{g_{b}}\otimes\omega_{b}^\Upsilon(b'),
  \end{equation}
  and $\omega_{b}^\Upsilon(b')$ denotes the
  solution of the Cauchy problem
  \begin{equation}
    \label{nonlinear:eq:linearized-wave-gauge-correction:def}
    \begin{split}
      \begin{cases}
        \Constraint_{g_{b}}\circ\nabla_{g_{b}}\otimes\omega_{b}^\Upsilon(b')
        = -\Constraint_{g_{b}}(g_{b}'(b')) & \text{in }\mathcal{M},\\
        \InitData(\omega_{b}^\Upsilon(b'))=(0,0)&\text{on }\Sigma_0.
      \end{cases} 
    \end{split}
  \end{equation}
\end{prop}
Observe that
$(g_b')^\Upsilon(b') + \nabla_{g_b}\otimes i_{\Theta}\vartheta'$ are
infinitesimal diffeomorphisms and infinitesimal changes in black hole
parameters, and are solutions satisfying the linearized harmonic gauge
condition. These are precisely the solutions to the linearized
Einstein equations in harmonic gauge which correspond to general
covariance and changes in the black hole parameters $b$. Thus,
Proposition \ref{nonlinear:prop:lambda-kernel} and Proposition
\ref{nonlinear:prop:lambda-isomorphism} together state that the finite linear
obstacles to exponential decay in Proposition
\ref{nonlinear:prop:orthogonality-condition} are unphysical in that they either
\begin{enumerate}
\item do not satisfy the linearized harmonic gauge constraint, or
\item are an infinitesimal diffeomorphism or an infinitesimal change
  of the black hole parameters. 
\end{enumerate}
It will be convenient to observe that $\lambda_{\Upsilon}$ and
$\lambda_{\Constraint}$ are invariant under time translation.
\begin{prop}
  \label{nonlinear:prop:lambda-time-translation}
  Fix $g_b$ a slowly-rotating \KdS{} metric, and
  let $h$ be the solution to the Cauchy problem in
  \eqref{nonlinear:eq:lin-theory:Cauchy-problem}, for some $(f, h_0, h_1)\in
  D^{k,\SpectralGap,1}(\StaticRegionWithExtension), k>0$.
  Then in fact
  \begin{equation}
    \label{nonlinear:eq:lambda-time-translation}
    \begin{split}
      \lambda_{\Upsilon}[g_b]\left(
      f(\tStar+\TStar), h(\TStar), \p_{\tStar}h(\TStar)
    \right)
    &= \lambda_{\Upsilon}[g_b]\left(
      f(\tStar), h_0, h_1
    \right),\\
      \lambda_{\Constraint}[g_b]\left(
      f(\tStar+\TStar), h(\TStar), \p_{\tStar}h(\TStar)
    \right)
    &= \lambda_{\Constraint}[g_b]\left(
      f(\tStar), h_0, h_1
    \right).
    \end{split}    
  \end{equation}
\end{prop}

Finally, we will need the following lemma controlling the dependence
of $\lambda_{\Upsilon}[g_b]$ on the black hole parameters $b$. This is
the content of Corollary 10.10 in \cite{fang_linear_2021}.
\begin{lemma}
  \label{nonlinear:lemma:lambda-b-derivative}
  Let $g_b$ be a slowly-rotating \KdS{} metric. If
  $f\in L^2(\StaticRegionWithExtension)$ is supported on
  $\DomainOfIntegration = [0, \TStar]\times\Sigma$, then
  \begin{equation*}
    \abs*{D_{b}\lambda_{\Upsilon}[g_b](f, h_0, h_1)}
    \lesssim \norm*{f}_{L^2(\DomainOfIntegration)} + \norm*{(h_0, h_1)}_{\LSolHk{1}(\Sigma_0)}. 
  \end{equation*}
\end{lemma}

\subsection{Exponential growth on perturbation of \KdS}
\label{nonlinear:sec:lin-theory:energy}

In this section we state the main energy estimate that we need on a
perturbation of \KdS. The proof of the following proposition is
postponed to Section \ref{nonlinear:sec:proof-of-perturbed-ILED}.
\begin{prop}
  \label{nonlinear:prop:BA-E:lossless-exp-grow-ILED}
  Fix $\BSEGrowth>0$ and let $\DomainOfIntegration = [0,
  \TStar]\times\Sigma$.  Then fix $g=g_b+\tilde{g}$, where $g_b$ is a
  slowly-rotating \KdS{} metric, and
  \begin{equation*}
    \sup_{\tStar>0}e^{-\BSEGrowth\tStar}\norm{\tilde{g}}_{\HkWithT{3}(\Sigma_{\tStar})}\le
  \BSConstant
  \end{equation*}
  and let $h$ solve the Cauchy problem
  \begin{equation*}
    \begin{split}
      \LinEinsteinS_{g}h &= f,\\
      \gamma_0(h) &= (h_0, h_1),
    \end{split}
  \end{equation*}
  where $\LinEinsteinS_{g}$ is as in
  \eqref{nonlinear:eq:EVE:quasilinear-LinEinstein-def} and\footnote{The
    requirement that $k\ge 3$ again has to do with the threshold
    regularity level in \cite{fang_linear_2021}, see Remark
    \ref{nonlinear:rmk:threshold-regularity}.}
  $(f, h_0, h_1)\in D^{k,-\BSEGrowth,0}(\StaticRegionWithExtension),
  k\ge 3$.  Then for $\BSConstant$ sufficiently small, $h$ satisfies
  the estimate
  \begin{equation}
    \label{nonlinear:eq:BA-E:lossless-exp-grow-ILED}
    \norm*{e^{-\BSEGrowth \tStar}h}_{H^1(\DomainOfIntegration)}
    \lesssim \norm{(h_0, h_1)}_{\LSolHk{1}(\Sigma_0)}
    + \norm*{e^{-\BSEGrowth\tStar}h}_{\HkWithT{1}(\Sigma_{\TStar})}
    + \int_0^{\TStar}e^{-\BSEGrowth \tStar}\left(
      \norm{f}_{\InducedLTwo(\Sigma_{\tStar})} + \norm{h}_{\InducedLTwo(\Sigma_{\tStar})}\right)\,d\tStar.
  \end{equation}
\end{prop}
We make a few remarks regarding Proposition
\ref{nonlinear:prop:BA-E:lossless-exp-grow-ILED}. Proposition
\ref{nonlinear:prop:BA-E:lossless-exp-grow-ILED} is a perturbation of a weak
Morawetz estimate on exact \KdS{} spacetimes with two key properties:
\begin{enumerate}
\item first, the estimate in \eqref{nonlinear:eq:BA-E:lossless-exp-grow-ILED}
  does not lose derivatives. This is critical to improve the
  bootstrap assumptions at the highest level of regularity;
\item second, the estimate in \eqref{nonlinear:eq:BA-E:lossless-exp-grow-ILED}
  is proven for arbitrarily small $\BSEGrowth>0$. To improve the
  low-regularity exponential-decaying bootstrap assumption, we use an
  interpolation argument. Having
  \eqref{nonlinear:eq:BA-E:lossless-exp-grow-ILED} for arbitrarily small
  $\BSEGrowth>0$ then allows us to drastically reduce the number of
  derivatives needed for the aforementioned interpolation argument. 
\end{enumerate}

\section{Bootstrap assumptions}
\label{nonlinear:sec:BA-Assume}

In this section, we set up the bootstrap assumptions that we will
use to prove Theorem \ref{nonlinear:thm:main}.

\subsection{The semi-global extension}

Instead of making bootstrap assumptions on solutions of EVE up to a
finite time, we make bootstrap assumptions on a semi-global extension
of EVE.

We first introduce an auxiliary proposition that generates admissible
gauged initial data for the linearized gauged Einstein equations from
a solution to the nonlinear Einstein equations in harmonic gauge that
is quadratically close to the nonlinear solution.

\begin{prop}
  \label{nonlinear:prop:Bootstrap:iota-construction:init-data}
  Fix $\TStar > 0$. Then there exists a map
  \begin{equation*}
    \begin{split}
      \iota_{b, \TStar}: \LSolHk{k}(\Sigma_{\TStar})&\to
      \LSolHk{k}(\Sigma_{\TStar})\\
      (h_0, h_1)&\mapsto (\widetilde{h}_0, \widetilde{h}_1),
    \end{split}
  \end{equation*}
  satisfying the following properties. 
  \begin{enumerate}
  \item If $\widetilde{h}\in S^2T^*\StaticRegionWithExtension$ is a two-tensor
    inducing $\iota_{b, \TStar}(h_0, h_1)$ on
    $\Sigma_{\TStar}$ in the sense that
    \begin{equation*}
      \left.\left(\widetilde{h}(\TStar, \cdot), \LieDerivative_{\KillT}\widetilde{h}(\TStar, \cdot)\right)\right\vert_{\Sigma_{\TStar}} = \iota_{b, \TStar}(h_0, h_1),
    \end{equation*}
    then $\widetilde{h}$ satisfies the
    linearized harmonic gauge constraint on
    $\Sigma_{\TStar}$ given by
    \begin{equation*}
      \left.\Constraint_{g_b}(\widetilde{h})\right\vert_{\Sigma_{\TStar}} = 0.
    \end{equation*}
  \item If
    $\left(h|_{\Sigma_{\TStar}},=
      \LieDerivative_{\KillT}h|_{\Sigma_{\TStar}}\right)$ satisfies
    the nonlinear harmonic gauge constraint on $\Sigma_{\TStar}$ with
    respect to $g_b$, and
    \begin{equation*}
      \left(
        \widetilde{h}|_{\Sigma_{\TStar}}, \LieDerivative_{\KillT}\widetilde{h}|_{\Sigma_{\TStar}}\right)
      =\iota_{b,\TStar}\left(
        h|_{\Sigma_{\TStar}}, \LieDerivative_{\KillT}h|_{\Sigma_{\TStar}}\right),
    \end{equation*}
    then there exist $q_{\TStar}(\cdot, \cdot)$, $\tilde{q}_{\TStar}(\cdot, \cdot)$ such that
    \begin{equation}
      \label{nonlinear:eq:underline-q-def}
      \left.\left(h - \widetilde{h}\right)\right\vert_{\Sigma_{\TStar}} = 0,\qquad
      \left.\p_{\tStar}\left(h - \widetilde{h}\right)\right\vert_{\Sigma_{\TStar}} = \underline{q}_{\TStar}(h\vert_{\Sigma_{\TStar}}, \p h\vert_{\Sigma_{\TStar}}) + \tilde{\underline{q}}_{\TStar}(h\vert_{\Sigma_{\TStar}}, \p h\vert_{\Sigma_{\TStar}}),
    \end{equation}
    where $\underline{q}_{\TStar}$ is quadratic in its arguments,
    and $\tilde{\underline{q}}_{\TStar}$ is at least cubic in its
    arguments. 
  \end{enumerate}
\end{prop}

\begin{proof}
  Let $h\in S^2T^*\StaticRegionWithExtension$ be a symmetric
  two-tensor inducing $(h_0, h_1)$ on $\Sigma_{\TStar}$. Then, let
  $({\InducedMetric}_{\TStar}, \InducedMetric_{\TStar}')$ denote the
  induced metric by $g_{b} + h, \LieDerivative_{\KillT}(g_{b} + h)$
  respectively on $\Sigma_{\TStar}$. Now we will construct some
  $\tilde{g} = g_b + \widetilde{h}$ such that
  $\left.\left(\widetilde{h},
      \p_{\tStar}\widetilde{h}\right)\right\vert_{\Sigma_{\TStar}} =
  \iota_{b,\TStar}(h_0, h_1)$ explicitly satisfying the conditions in
  the lemma.

  First, we define $\widetilde{h} = \InducedMetric_{\TStar} - g_b$, and
  $\widetilde{h}'_{ij} = (\InducedMetric'_{\TStar})_{ij}$. It remains to
  define $\widetilde{h}'_{\tStar \mu}$ using the linearized gauge
  condition.  To this end, we recall the gauge condition on
  $\Sigma_{\TStar}$ linearized around $g_b$,
  \begin{equation}
    \label{nonlinear:eq:linearized-gauge-condition}
    \begin{split}
      0 ={}&- \frac{1}{2}\widetilde{h}^{\alpha\beta}\p_\mu (g_b)_{\alpha\beta}
      - \frac{1}{2}(g_b)^{\alpha\beta}\p_\mu \widetilde{h}_{\alpha\beta}
      + (g_b)^{\alpha\beta}\p_{\alpha}\widetilde{h}_{\mu\beta}
      + \widetilde{h}^{\alpha\beta}\p_{\alpha}(g_b)_{\mu\beta}\\
      &- \widetilde{h}_{\mu\gamma}(g_b)^{\alpha\beta}\ChristoffelTypeTwo[g_b]{\gamma}{{\alpha\beta}}
      - (g_b)_{\mu\gamma}\widetilde{h}^{\alpha\beta}\ChristoffelTypeTwo[g_b]{\gamma}{{\alpha\beta}}.
    \end{split}
  \end{equation}
  Contracting \eqref{nonlinear:eq:linearized-gauge-condition} with $\KillT$, we
  have that
  \begin{equation}
    \label{nonlinear:eq:linearized-gauge:pt-gtt}
    \begin{split}
      \frac{1}{2}(g_b)^{\tStar\tStar}\p_{\tStar}\widetilde{h}_{\tStar\tStar}
      ={}& - (g_b)^{i\beta}\p_{i}\widetilde{h}_{\tStar \beta}
      + \frac{1}{2}\widetilde{h}^{\alpha\beta}\p_{\tStar}(g_b)_{\alpha\beta}
      - \widetilde{h}^{\alpha\beta}\p_{\alpha}(g_b)_{\tStar\beta}\\
      & + \widetilde{h}_{\tStar \gamma}(g_b)^{\alpha\beta}\ChristoffelTypeTwo[g_b]{\gamma}{\alpha\beta}
      + (g_b)_{\tStar \gamma}\widetilde{h}^{\alpha\beta}\ChristoffelTypeTwo[g_b]{\gamma}{\alpha\beta}.
    \end{split}    
  \end{equation}
  As we already have
  $\left.\widetilde{h}\right\vert_{\Sigma_{\TStar}},
  \left.\p_i\widetilde{h}\right\vert_{\Sigma_{\TStar}}$, and
  $\left.\p_{\tStar}\widetilde{h}_{ij}\right\vert_{\Sigma_{\TStar}}$,
  solving equation
  \eqref{nonlinear:eq:linearized-gauge:pt-gtt} uniquely determines
  $\widetilde{h}'_{\tStar \tStar} = \left.\p_{\tStar}\widetilde{h}_{\tStar\tStar}\right\vert_{\Sigma_{\TStar}}$.

  Now consider contracting \eqref{nonlinear:eq:linearized-gauge-condition} with
  $\p_i$. In this case we have that
  \begin{equation}
    \label{nonlinear:eq:linearized-gauge:pi-gtt}
    \begin{split}      
      (g_b)^{\tStar\tStar}\p_{\tStar}\widetilde{h}_{\tStar i}
      ={}& -(g_b)^{\beta j}\p_j \widetilde{h}_{i\beta}
      + \frac{1}{2}(g_b)^{\alpha\beta}\p_i\widetilde{h}_{\alpha\beta}
      + \frac{1}{2}\widetilde{h}^{\alpha\beta}\p_i (g_b)_{\alpha\beta}
      - \widetilde{h}^{\alpha\beta}\p_\alpha (g_b)_{i\beta}\\
      &+ \widetilde{h}_{i \gamma}(g_b)^{\alpha\beta}\ChristoffelTypeTwo[g_b]{\gamma}{\alpha\beta}
      + (g_b)_{i \gamma}\widetilde{h}^{\alpha\beta}\ChristoffelTypeTwo[g_b]{\gamma}{\alpha\beta}.
    \end{split}
  \end{equation}  
  As before, \eqref{nonlinear:eq:linearized-gauge:pi-gtt} uniquely determines
  $\widetilde{h}'_{\tStar i} = \left.\p_{\tStar}\widetilde{h}_{\tStar i}\right\vert_{\Sigma_{\TStar}}$.

  By construction, we have defined $(\widetilde{h}, \p_{\tStar}\widetilde{h})$
  satisfying the linearized gauge constraint on
  $\Sigma_{\TStar}$. Moreover, by construction
  \begin{equation*}
    \left.(h-\widetilde{h})\right\vert_{\Sigma_{\TStar}} = 0, \qquad \left.\p_{\tStar}(h_{ij}-\widetilde{h}_{ij})\right\vert_{\Sigma_{\TStar}}=0. 
  \end{equation*}
  If $g_b+h$ solves the nonlinear gauge constraint on
  $\Sigma_{\TStar}$, and $g_b + \widetilde{h}$ solves the
  linearized gauge constraint on $\Sigma_{\TStar}$, then it is clear that
  $\left.\p_{\tStar}(h-\widetilde{h})\right\vert_{\Sigma_{\TStar}}$ can be
  decomposed into a nonlinearity
  \begin{equation*}
    \left.\p_{\tStar}(h-\widetilde{h})\right\vert_{\Sigma_{\TStar}} = \underline{q}_{\TStar}(h, \p h) + \tilde{\underline{q}}_{\TStar}(h, \p h),
  \end{equation*}
  consisting of a quadratic nonlinearity and a higher-order
  nonlinearity respectively as desired.   
\end{proof}

We can now explicitly construct the desired semi-global extension of
the EVE. First, let $\chi(\tStar)$ be a smooth, compactly supported
function such that
\begin{equation}
  \label{nonlinear:eq:chi-tStar-def}
  \chi(\tStar)=
  \begin{cases}
    1 & \tStar\le 0\\
    0 & \tStar\ge 1
  \end{cases},\qquad \chi_{\TStar}(\tStar) = \chi(\tStar-\TStar). 
\end{equation}

\begin{definition}
  \label{nonlinear:def:h-def}
  From this point forward in the paper, we denote
  \begin{equation}
    \label{nonlinear:eq:h:def}
    h_{\TStar} := \chi_{\TStar}h + (1-\chi_{\TStar})\widetilde{h}_{\TStar},
  \end{equation}
  where $\phi^*(g_b+h)$ is a solution to EVE with initial data
  $(\InducedMetric, k)$, and where $\widetilde{h}_{\TStar}$ is the solution to
  \begin{equation}
    \label{nonlinear:eq:tilde-h:Cauchy-problem}
    \begin{split}
      \LinEinstein_{g_b}\widetilde{h}_{\TStar} &=0\\
      \gamma_{\TStar}(\widetilde{h}_{\TStar}) &= \iota_{b,\TStar}(h|_{\Sigma_{\TStar}}, \p_{\tStar}h|_{\Sigma_{\TStar}}).
    \end{split}    
  \end{equation}
  To emphasize their dependence on the choice of parameters $(b,
  \vartheta, \InducedMetric, k, \TStar)$, we will write
  \begin{equation}
    \label{nonlinear:eq:h-def:with-params}
    \begin{split}
      h_{\TStar} &= h_{\TStar}(b,\vartheta,\InducedMetric, k),\\
      h &= h(b,\vartheta,\InducedMetric, k),\\
      \widetilde{h}_{\TStar} &= \widetilde{h}_{\TStar}(b,\vartheta,\InducedMetric, k).\\
    \end{split}    
  \end{equation}
\end{definition}

\begin{definition}
  \label{nonlinear:def:NTilde:def}
  Fix some $\TStar>0$. Then we define
  \begin{equation}
    \label{nonlinear:eq:NTilde:def}
    \NCal_{g_b}^{\TStar}(h_{\TStar}, \p h_{\TStar}, \p\p h_{\TStar})
    := \chi_{\TStar} \NCal_{g_b}(h,\p h, \p\p h)
    + \left[\LinEinstein_{g_b},\chi_{\TStar}\right](h-\widetilde{h}_{\TStar}), 
  \end{equation}
  where we recall $\widetilde{h}_{\TStar}$ is the solution to the Cauchy problem in
  \eqref{nonlinear:eq:tilde-h:Cauchy-problem}, $\LinEinstein_{g_b}$ is as
  defined in \eqref{nonlinear:eq:EVE:LinEinstein-def}, and $\NCal_{g_b}$ is as
  defined in \eqref{nonlinear:eq:EVE:quasilinear-system:decay}.
\end{definition}

We can now define the relevant semi-global extension of EVE. 
\begin{prop}
  \label{nonlinear:prop:bs-system}
  Fix $\TStar>0$, some diffeomorphism $\phi=e^{i_{\Theta}\vartheta}$ where $\vartheta\in \Real^{N_{\Theta}}$, a
  slowly rotating \KdS{} metric $g_b$, and
  $(\Sigma_0, \InducedMetric_0, k_0 )$ an admissible initial data
  triplet. Then, for $\phi^*(g_b + h)$ a solution to EVE launched by
  the initial data $(\Sigma_0, \InducedMetric_0, k_0)$, $h_{\TStar}$
  as defined in Definition \ref{nonlinear:def:h-def} solves the
  \emph{semi-global extension of EVE} generated by
  $(b, \vartheta, \TStar)$, given by
  \begin{equation}
    \label{nonlinear:eq:EVE:bootstrap-system}
    \begin{split}
      \LinEinstein_{g_b}h_{\TStar} &= {\NCal}^{\TStar}_{g_b}(h_{\TStar}, \p h_{\TStar}, \p\p h_{\TStar}),\\
      \gamma_{0}(h_{\TStar}) &= i_{b,\phi}(\InducedMetric_0, k_0),
    \end{split}    
  \end{equation}
  where $\NCal_{g_b}^{\TStar}$ is as given in \eqref{nonlinear:eq:NTilde:def}.
  Moreover,
  \begin{enumerate}
  \item $h_{\TStar} = h$ for $\tStar\le \TStar$,
  \item $\Constraint_{g_b}(h_{\TStar}) = 0$ for $\tStar\ge\TStar+1$.
  \end{enumerate}
\end{prop}
\begin{proof}
  The proof is straightforward from the definitions of
  $h_{\TStar}$ and $N_{g_b}^{\TStar}$ in Definitions
  \ref{nonlinear:def:h-def} and \ref{nonlinear:def:NTilde:def} respectively.
\end{proof}
To improve the bootstrap assumptions, we need certain control on
$\NCal_{g_b}^{\TStar}$.
\begin{prop}
  \label{nonlinear:prop:N-control}
  Fix some $\TStar>0$. Then, there exist nonlinear functions $q(\cdot,
  \cdot, \cdot), \tilde{q}(\cdot, \cdot, \cdot), q_{\TStar}(\cdot,
  \cdot)$, and $\tilde{q}_{\TStar}(\cdot, \cdot)$ such
  that for $\tStar\le \TStar$,
  \begin{equation}
    \label{nonlinear:eq:NCal:energy-bound}
    {\NCal}^{\TStar}_{g_b}(h_{\TStar}, \p h_{\TStar}, \p\p h_{\TStar})  = q(h_{\TStar}, \p h_{\TStar}, \p\p h_{\TStar}) + \tilde{q}(h_{\TStar}, \p h_{\TStar}, \p\p h_{\TStar})
  \end{equation}
  and
  \begin{align}    
    &\sup_{\tStar\in [\TStar, \TStar+1]}\norm*{{\NCal}^{\TStar}_{g_b}(h_{\TStar}, \p h_{\TStar}, \p\p h_{\TStar}) }_{\InducedLTwo(\Sigma_{\tStar})}\notag \\
    \lesssim{}&
    \sup_{\tStar\in [\TStar, \TStar+1]}\norm*{q(h_{\TStar}, \p h_{\TStar}, \p\p h_{\TStar} )}_{\InducedLTwo(\Sigma_{\tStar})}
    + \sup_{\tStar\in [\TStar, \TStar+1]}\norm*{\tilde{q}(h_{\TStar}, \p h_{\TStar}, \p\p h_{\TStar} )}_{\InducedLTwo(\Sigma_{\tStar})}\notag\\
    &+ \sup_{\tStar\in [\TStar, \TStar+1]}\norm*{q_{\TStar}(h_{\TStar}, \p h_{\TStar} )}_{\InducedLTwo(\Sigma_{\tStar})}
    + \sup_{\tStar\in [\TStar, \TStar+1]}\norm*{\tilde{q}_{\TStar}(h_{\TStar}, \p h_{\TStar})}_{\InducedLTwo(\Sigma_{\tStar})} ,    \label{nonlinear:eq:NCal:TStar-bound}  
  \end{align}
  where 
  \begin{equation}
    \label{nonlinear:eq:q-q-tilde:def}
    \begin{split}
      q(h_{\TStar}, \p h_{\TStar}, \p\p h_{\TStar}) &= \sum_{\abs*{\alpha}+ \abs*{\beta} \le 2} q_{\alpha\beta}\p^\alpha h_{\TStar} \p^\beta h_{\TStar},\\
      \tilde{q}(h_{\TStar}, \p h_{\TStar}) &= \sum_{\abs*{\alpha}+ \abs*{\beta} \le 2} \tilde{q}_{\alpha\beta}\p^\alpha h_{\TStar} \p^\beta h_{\TStar},  
    \end{split}    
  \end{equation}
  and $\tilde{q}, \tilde{q}_{\TStar}$ are at least cubic in their
  arguments, and
  $q(h_{\TStar}, \p h_{\TStar}, \p\p h_{\TStar})$
  and $\tilde{q}(h_{\TStar}, \p h_{\TStar}, \p\p h_{\TStar})$ are
  quasilinear in $h_{\TStar}$, and  $q_{\TStar}(h_{\TStar}, \p
  h_{\TStar})$ and $\tilde{q}_{\TStar}(h_{\TStar}, \p
  h_{\TStar})$ are semilinear in $h_{\TStar}$
\end{prop}

\begin{proof}
  The first conclusion in \eqref{nonlinear:eq:NCal:energy-bound} follows
  directly from the definition of $\NCal^{\TStar}_{g_b}$ and
  $\NCal_{g_b}$. 

  For the second conclusion, observe that $h-\widetilde{h}_{\TStar}$
  satisfies the Cauchy problem given by
  \begin{equation*}
    \begin{split}
      \LinEinstein_{g_b}(h-\widetilde{h}_{\TStar}) &= \NCal_{g_b}(h, \p h, \p\p h),\\
      \gamma_{\TStar}(h-\widetilde{h}_{\TStar}) &= \left(0, \underline{q}_{\TStar}(h_{\TStar}, \p h_{\TStar}) + \tilde{\underline{q}}_{\TStar}(h_{\TStar}, \p h_{\TStar})\right),
    \end{split}    
  \end{equation*}
  where $\underline{q}_{\TStar}(h_{\TStar}, \p h_{\TStar})$,
  $\tilde{\underline{q}}_{\TStar}(h_{\TStar}, \p h_{\TStar})$ are as
  defined in \eqref{nonlinear:eq:underline-q-def}. Then, since $[\TStar, \TStar+1]$ is a time
  interval of bounded length, an energy estimate with a timelike
  vectorfield (see for example Lemma \ref{nonlinear:lemma:Energy:Gronwall}) and
  Gronwall allows us to conclude using the definition of $h_{\TStar}$
  in \eqref{nonlinear:eq:h:def}. 
\end{proof}
The following corollary follows immediately from the construction of $\NCal_{g_b}^{\TStar}$.
\begin{corollary}
  \label{nonlinear:coro:semi-global-ext:constraint-mode-orthogonality}
  Fix a slowly-rotating \KdS{} metric $g_b$, and some
  $\TStar>0$. Then, let $h_{\TStar}$ denote the solution to the
  semi-global extension of EVE in
  \eqref{nonlinear:eq:EVE:bootstrap-system}. Then,
  \begin{equation*}
    \lambda_{\Constraint}[g_b]\left(\NCal_{g_b}^{\TStar}(h_{\TStar}, \p h_{\TStar}, \p\p h_{\TStar}), i_{b, \phi}(\InducedMetric_0, k_0) \right) = 0,
  \end{equation*}
  where $\phi=e^{i_{\Theta}\vartheta}$.
  Moreover, if
  \begin{equation*}
    \lambda_{\Upsilon}[g_b] \left(\NCal_{g_b}^{\TStar}(h_{\TStar}, \p h_{\TStar}, \p\p h_{\TStar}), i_{b, \phi}(\InducedMetric_0, k_0) \right) = 0,
  \end{equation*}
  then for $k\ge 3$, $h_{\TStar}$ satisfies the decay estimate
  \begin{equation*}
    \sup_{\tStar>0}e^{\BSDDecay\tStar}\norm*{h_{\TStar}}_{\HkWithT{k}(\Sigma_{\tStar})}\lesssim \norm*{\left(\NCal_{g_b}^{\TStar}(h_{\TStar}, \p h_{\TStar}, \p\p h_{\TStar}), i_{b,\phi}(\InducedMetric_0, k_0)\right)}_{D^{k,\BSDDecay,1}(\StaticRegionWithExtension)}.
  \end{equation*}   
\end{corollary}
\begin{proof}
  Observe that by construction,
  $\NCal_{g_b}^{\TStar}(h_{\TStar}, \p h_{\TStar}, \p\p h_{\TStar})=0$
  for $\tStar> \TStar+1$, and that a solution $h_{\TStar}$ to the
  semi-global Cauchy problem in \eqref{nonlinear:eq:EVE:bootstrap-system}
  eventually satisfies the linearized gauge constraint. That is, for
  $\tStar > \TStar+1$,
  \begin{equation*}
    \Constraint_{g_b}(h_{\TStar}) = 0.
  \end{equation*}
  The conclusion is then an immediate consequence of Propositions
  \ref{nonlinear:prop:orthogonality-condition} and \ref{nonlinear:prop:lambda-kernel}. 
\end{proof}

\subsection{The bootstrap assumptions}
\label{nonlinear:sec:BA-assumptions:specific}

We are now ready to specify our bootstrap assumptions. Fix a bootstrap
constant $\BSConstant =\varepsilon_0^{\frac{2}{3}}$. Then let
$\TStar>0$ be a bootstrap time for which there exist \KdS{} black hole
parameters $b_{\TStar}$, and a vector
$\vartheta_{\TStar} \in \Real^{N_{\Theta}}$  satisfying
\begin{equation}
  \label{nonlinear:BA-G}\tag{BA-G}
  \abs*{\vartheta_{\TStar}} + \abs*{b_{\TStar}-b^0} \le \BSConstant,
\end{equation}
such that, recalling $g = \phi_{\TStar}(b)^*(g_{b_{\TStar}}+h)$,
the metric perturbation $h_{\TStar}$ solving
\eqref{nonlinear:eq:EVE:bootstrap-system} satisfies the orthogonality condition
\begin{equation}
  \label{nonlinear:ORT-T} \tag{ORT} 
  \lambda_{\Upsilon}[g_{b_{\TStar}}]\left(\NCal^{\TStar}_{b_{\TStar}}(h_{\TStar},\p h_{\TStar}, \p\p h_{\TStar}), i_{b_{\TStar},\phi_{\TStar}}(\InducedMetric_0, k_0)\right) = 0,
\end{equation}
where $\lambda_{\Upsilon}$ is as constructed
in Proposition \ref{nonlinear:prop:orthogonality-condition}. Moreover,
assume that $h_{\TStar}$ satisfies in addition the bootstrap estimates
\begin{align}
  \sup_{\tStar\ge 0}e^{-\BSEGrowth \tStar} \norm*{h_{\TStar}}_{\HkWithT{j}(\Sigma_{\tStar})}
  &\le \BSConstant, \qquad j\le \BSTopLvl \label{nonlinear:BA-E}\tag{BA-E},\\
  \norm*{e^{\BSDDecay\tStar} h_{\TStar}}_{H^j(\StaticRegionWithExtension)}
  &\le \BSConstant,\qquad j\le \BSLowLvl, \label{nonlinear:BA-D}\tag{BA-D} 
\end{align}
where we will take $\BSTopLvl=6$, and $\BSLowLvl=3$. We pick
$\BSEGrowth>0$ and $\BSDDecay>0$ such that $\BSDDecay<\SpectralGap$,
where $\SpectralGap$ is as in Proposition
\ref{nonlinear:prop:orthogonality-condition}, and moreover, $\BSEGrowth$,
$\BSDDecay$, $\BSTopLvl$, and $\BSLowLvl$ satisfy the conditions
\begin{align}  
  2 \BSEGrowth &< ( \BSTopLvl - \BSLowLvl - 2)\BSDDecay, \label{nonlinear:eq:BA-ass:Exp-Growth-Decay}\\
  2 &< \BSTopLvl - \BSLowLvl. \label{nonlinear:eq:BA-ass:High-Low-Diff}
\end{align}
\begin{remark}
  \label{nonlinear:remark:reg-threshold}
  The conditions on $\BSEGrowth$, $\BSDDecay$, $\BSTopLvl$, and
  $\BSLowLvl$ in
  \eqref{nonlinear:eq:BA-ass:Exp-Growth-Decay} and \eqref{nonlinear:eq:BA-ass:High-Low-Diff}
  are made so that we can use an interpolation argument to improve the
  bootstrap assumptions in what follows. We also require that
  $\BSLowLvl > \frac{5}{2}$ in order to use Sobolev embeddings. As a
  result, it should be possible to sharpen the regularity requirements
  in Theorem \ref{nonlinear:thm:main} to require only $\BSTopLvl>\frac{9}{2}$
  derivatives. However, for the sake of simplicity and to avoid
  fractional Sobolev spaces, we proceed here with the choice
  $\BSTopLvl = 6$, and $\BSLowLvl=3$.
\end{remark}

\subsection{Initialization of the bootstrap argument}
\label{nonlinear:sec:justify-BS}

In this section, we will prove that there exists some sufficiently
small $\TStar>0$ such that the bootstrap assumptions detailed in
Section \ref{nonlinear:sec:BA-assumptions:specific} hold.

The main proposition of this section is the following. 
\begin{prop}
  \label{nonlinear:prop:BA-justify}
  There exists some
  $\delta_0>0$ such that for any
  $(\InducedMetric_0, k_0)\in H^{\BSLowLvl}(\Sigma_0)\times
  H^{\BSLowLvl-1}(\Sigma_0)$, and $\TStar>0$ satisfying
  \begin{equation*}
    \norm*{(\InducedMetric_0, k_0) - (\InducedMetric_{b^0}, k_{b^0})}_{H^{\BSLowLvl}(\Sigma_0)\times H^{\BSLowLvl-1}(\Sigma_0)} + \TStar < \delta_0,
  \end{equation*}
  there exists a choice of $b\in \Real^4$ and
  $\vartheta\in\Real^{N_\Theta}$ such that
  \begin{equation}
    \label{nonlinear:eq:BA-justify:param-smallness}
    \abs*{\vartheta} + |b-b^0| \lesssim \delta_0,
  \end{equation}
  and such that the solution $h$ of \eqref{nonlinear:eq:EVE:bootstrap-system}
  satisfies the orthogonality condition \eqref{nonlinear:ORT-T}, as well as the
  bootstrap estimates \eqref{nonlinear:BA-E} and \eqref{nonlinear:BA-D}.
\end{prop}

From Proposition \ref{nonlinear:prop:BA-justify}, it is straightforward to see
that we can define $\BSConstant$ and $\varepsilon_0$ to initialize the
bootstrap argument. 
\begin{corollary}
  \label{nonlinear:coro:BA-init:true-init}  
  There exists a
  time $T_0>0$ such that for initial data $(\InducedMetric_0, k_0)$
  satisfying \eqref{nonlinear:eq:main:init-data-smallness}, there exist
  $(b_{T_0}, \vartheta_{T_0})\in \Real^4\times\Real^{N_{\Theta}}$ such
  that
  \begin{align*}
    \lambda_{\Upsilon}[g_b]\left(\NCal^{T_0}_{g_{b_{T_0}}}(h_{T_0}(b_{T_{0}}, \vartheta_{T_0}, \InducedMetric_0, k_0)), i_{b_{T_0},\phi_{T_0}}(\InducedMetric_0, k_0)\right)&=0,\\
    \abs*{\vartheta_{T_0}} + \abs*{b_{T_0}-b^0} &\le \BSConstant,\\
    \sup_{\tStar\ge 0}e^{-\BSEGrowth\tStar}\norm*{h_{T_0}(b_{T_{0}}, \vartheta_{T_0}, \InducedMetric_0, k_0)}_{\HkWithT{j}(\Sigma_{\tStar})} &\le \BSConstant,\qquad j\le \BSTopLvl,\\
    \sup_{\tStar\ge 0}e^{\BSDDecay\tStar}\norm*{h_{T_0}(b_{T_{0}}, \vartheta_{T_0}, \InducedMetric_0, k_0)}_{\HkWithT{j}(\Sigma_{\tStar})} &\le \BSConstant,\qquad j\le \BSLowLvl.
  \end{align*}  
\end{corollary}
\begin{proof}
  Setting $\BSConstant=\delta^{\frac{3}{4}}$, and taking $\delta$
  sufficiently small we have that
  \begin{equation*}
    \varepsilon_0<\frac{\delta}{2}<\BSConstant<1. 
  \end{equation*}
  Moreover, taking $T_0=\frac{\delta}{2}$, 
  \begin{equation*}
    \norm*{(\InducedMetric_0, k_0) - (\InducedMetric_{b^0}, k_{b^0})}_{H^{\BSTopLvl}(\Sigma_0)\times H^{\BSTopLvl-1}(\Sigma_0)} \le \varepsilon_0 \le \frac{\delta}{2}. 
  \end{equation*}
  Then, applying Proposition \ref{nonlinear:prop:BA-justify} and taking $\delta$
  sufficiently small concludes the proof of Corollary
  \ref{nonlinear:coro:BA-init:true-init}.
\end{proof}
The remainder of this section is focused on the proof of Proposition
\ref{nonlinear:prop:BA-justify}.  We first prove a lemma utilizing the implicit
function theorem that we will make repeated use of in the remainder of
the proof.
\begin{lemma}
  \label{nonlinear:lemma:IFT-app:fixed-f}
  Define
  \begin{gather}    
    \IFTNonlinearQuantity: \Real^4\times \Real^{N_{\Theta}} \times
    H^{\BSLowLvl}(\Sigma_0)\times H^{\BSLowLvl-1}(\Sigma_0)    
    \times \Real^+\to \Real^{4+N_{\Theta}},\notag \\ 
    \IFTNonlinearQuantity(b, \vartheta, \InducedMetric, k, \TStar)= \lambda_{\Upsilon}[g_b]\left(\mathcal{N}^{\TStar}_{g_b}(h_{\TStar}, \p h_{\TStar}, \p\p h_{\TStar}), i_{b, \phi}(\InducedMetric, k)\right),   \label{nonlinear:eq:IFT-app:F-def}    
  \end{gather}
  where $\phi=e^{i_{\Theta}\vartheta}$, and $h_{\TStar}$ is the solution to the
  Cauchy problem in \eqref{nonlinear:eq:EVE:bootstrap-system}. Also let 
  $(\tilde{b}, \tilde{\vartheta}, \tilde{\InducedMetric}, \tilde{k},
  \widetilde{T})\in \Real^4\times \Real^{N_{\Theta}} \times
    H^{\BSLowLvl}(\Sigma_0)\times H^{\BSLowLvl-1}(\Sigma_0)    
    \times \Real^+$ such that 
  \begin{equation}
    \label{nonlinear:eq:IFTnonlinearquantity-param-tilde-0}
   \IFTNonlinearQuantity(b^\dagger, \vartheta^\dagger, \InducedMetric^\dagger, k^\dagger, T^\dagger)=0, 
 \end{equation}
 and moreover,
 \begin{equation}
   \label{nonlinear:eq:IFTnonlinearquantity-derivative-0}
   \begin{split}
     \left.D_{b,\vartheta}\mathcal{N}^{\TStar}_{g_b}(h_{\TStar}, \p h_{\TStar}, \p\p h_{\TStar})\right\vert_{(b^\dagger, \vartheta^\dagger, \InducedMetric^\dagger, k^\dagger, T^\dagger)} &= 0  \\
     \evalAt*{D_{b,\vartheta}\lambda[g_b]\left(\mathcal{N}^{\TStar}_{g_b}(h_{\TStar}, \p h_{\TStar}, \p\p h_{\TStar}), i_{b,\phi}(\InducedMetric, k)\right)}_{(b^\dagger, \vartheta^\dagger, \InducedMetric^\dagger, k^\dagger, T^\dagger)}&=0. 
   \end{split}   
 \end{equation}
 Then there exists some $\delta>0$ such that defining
 \begin{equation}
   \label{nonlinear:eq:IFT-app:X-delta-def}
    \mathcal{X}_{\delta,\tilde{\InducedMetric},
    \tilde{k},\widetilde{T}} \subset
    H^{\BSLowLvl}(\Sigma_0)
    \times H^{\BSLowLvl-1}(\Sigma_0)    
    \times \Real^+,
  \end{equation}
  such that
  $(\InducedMetric, k,\TStar)\in
  \mathcal{X}_{\delta,\tilde{\InducedMetric},
    \tilde{k},\widetilde{T}}$ if and only if
  \begin{equation*}
    \norm{\InducedMetric - \tilde{\InducedMetric}}_{H^{\BSLowLvl}(\Sigma_0)}
    + \norm{k-\tilde{k}}_{H^{\BSLowLvl-1}(\Sigma_0)}
    + \abs*{\TStar - \widetilde{T}}
    < \delta,
  \end{equation*}
  there exists a function $\IFTInverse(\InducedMetric, k, \TStar)$
  depending on $(\tilde{b},
  \tilde{\vartheta},\tilde{\InducedMetric},\tilde{k},\widetilde{T})$   
  \begin{equation}
    \label{nonlinear:eq:IFT-app:fixed-f:G-def}
    \IFTInverse(\InducedMetric, k, \TStar): \mathcal{X}_{\delta,\tilde{\InducedMetric},
    \tilde{k},\widetilde{T}} \to \Real^4\times \Real^{N_{\Theta}},
  \end{equation}
  which is well-defined and $C^1$ in its arguments on $\mathcal{X}_{\delta,\tilde{\InducedMetric},
    \tilde{k},\widetilde{T}}$.
  Moreover, for $(\InducedMetric, k, \TStar)\in \mathcal{X}_{\delta,\tilde{\InducedMetric},
    \tilde{k},\widetilde{T}}$,
  \begin{equation}
    \label{nonlinear:eq:IFT-app:fixed-f:G-inverse-prop}
    \IFTNonlinearQuantity(b,\vartheta, \InducedMetric, k, \TStar) = 0,\quad \iff
    \quad (b, \vartheta) = \IFTInverse(\InducedMetric, k, \TStar).
  \end{equation}
\end{lemma}
\begin{proof}
  Recalling the definition of $(g_{b}')^\Upsilon(b')$ in
  \eqref{nonlinear:eq:g-linearized-wave-gauge:def}, observe that we can calculate
  \begin{equation*}
    D_{b,\vartheta}i_{b,\phi}(\InducedMetric, k)\vert_{(\tilde{b}, \tilde{\vartheta}, \tilde{\InducedMetric}, \tilde{k}, \widetilde{T})}(b', \vartheta')
    = \gamma_0\left((g_{\tilde{b}}')^\Upsilon(b')+\nabla_{g_{\tilde{b}}}\otimes i_{\Theta}\vartheta'\right),   
  \end{equation*}
  As a result, using \eqref{nonlinear:eq:IFTnonlinearquantity-derivative-0},
  \begin{equation*}
    D_{b,\vartheta}\IFTNonlinearQuantity = \lambda_{\Upsilon}[g_{\tilde{b}}]\left(0, \gamma_0\left((g_{\tilde{b}}')^\Upsilon(b') + \nabla_{g_{\tilde{b}}}\otimes i_{\Theta}\vartheta' \right)\right).
  \end{equation*}
  Moreover, this is an isomorphism by Proposition \ref{nonlinear:prop:lambda-isomorphism}. The
  existence of $\mathcal{X}_{\delta, \tilde{\InducedMetric},
    \tilde{k}, \widetilde{T}}$ as in \eqref{nonlinear:eq:IFT-app:X-delta-def}, and
  of $\IFTInverse$ as in \eqref{nonlinear:eq:IFT-app:fixed-f:G-def} satisfying
  \eqref{nonlinear:eq:IFT-app:fixed-f:G-inverse-prop} is then a direct result of
  the implicit function theorem in Theorem \ref{nonlinear:thm:IFT}.  
\end{proof}

Next we prove two auxiliary lemmas that will be used in conjunction
with the application of the implicit function theorem in Lemma
\ref{nonlinear:lemma:IFT-app:fixed-f}. The first, Lemma
\ref{nonlinear:lemma:IFT-init:nbhd-construction}, locates the neighborhood on
which we will perform the implicit function theorem, and the second,
Lemma \ref{nonlinear:lemma:IFT-Lipchitz-Constant}, gives control over the
derivatives of $\IFTNonlinearQuantity$.
\begin{lemma}
  \label{nonlinear:lemma:IFT-init:nbhd-construction}
  There exists some $\delta_{init}>0$ sufficiently small, and a
  neighborhood $(b^0,0)\in \mathcal{B}_{init}\subset
  \Real^4\times\Real^{N_{\Theta}}$ such that for any $(b,\vartheta)\in
  \mathcal{B}_{init}$, the solution
  \begin{equation*}
    h_{\delta_{init}}(b,\vartheta,\InducedMetric_0, k_0) := \chi_{\delta_{init}}h(b, \vartheta, \InducedMetric_0, k_0) + (1-\chi_{\delta_{init}})\widetilde{h}_{\delta_{init}}(b, \vartheta, \InducedMetric_0, k_0),
  \end{equation*}
  using the definitions of
  $h_{\TStar}(b,\vartheta, \InducedMetric, k)$,
  $h(b,\vartheta, \InducedMetric, k)$, and
  $\tilde{h}_{\TStar}(b,\vartheta, \InducedMetric, k)$ in
  \eqref{nonlinear:eq:h-def:with-params}, satisfies the following estimates
  \begin{equation}
    \label{nonlinear:eq:IFT-init:nbhd-estimates}
    \begin{split}
      \sup_{\tStar<\delta_{init}+1} e^{\BSDDecay\tStar}\norm*{h_{\delta_{init}}(b,\vartheta,\InducedMetric_0, k_0)}_{\HkWithT{k}(\Sigma_{\tStar})}&\le 1,\qquad k\le \BSLowLvl,\\
      \sup_{\tStar<\delta_{init}+1} e^{-\BSEGrowth\tStar}\norm*{h_{\delta_{init}}(b,\vartheta,\InducedMetric_0, k_0)}_{\HkWithT{k}(\Sigma_{\tStar})}&\le 1,\qquad k\le \BSTopLvl.      
    \end{split}
  \end{equation}
\end{lemma}
\begin{proof}
  We consider the neighborhood
  \begin{equation*}
    \mathcal{B}_{init} := \curlyBrace*{(b,\vartheta): \abs*{b-b^0} + \abs*{\vartheta}<\delta_{init}}, 
  \end{equation*}
  and show that for a sufficiently small choice of $\delta_{init}$,
  $\mathcal{B}_{init}$ satisfies the conditions in the lemma.

  Observe that by local existence, if $\delta_{init}$ is
  sufficiently small, we have that
  \begin{equation*}
    \sup_{\tStar<\delta_{init}+1}\norm*{h(b,\vartheta,\InducedMetric_0, k_0)}_{\HkWithT{\BSTopLvl}(\Sigma_{\tStar})}
    \lesssim \varepsilon_0+\delta_{init}.
  \end{equation*}
  The estimates in \eqref{nonlinear:eq:IFT-init:nbhd-estimates} are then
  satisfied by taking $\varepsilon_0$, $\delta_{init}$ sufficiently
  small. 
\end{proof}

\begin{lemma}
  \label{nonlinear:lemma:IFT-Lipchitz-Constant}
  Let $\IFTNonlinearQuantity$ be as defined in
  \eqref{nonlinear:eq:IFT-app:F-def}. Then,
  let
  \begin{equation*}
    \left(b^\dagger, \vartheta^\dagger, \InducedMetric^\dagger,
      k^\dagger, T^\dagger\right) \in \Real^4 \times
    \Real^{N_{\Theta}}\times H^{\BSLowLvl}(\Sigma_0)\times
    H^{\BSLowLvl-1}(\Sigma_0)\times \Real^+,
  \end{equation*}
  and let us denote
  $h^\dagger_{T^\dagger}:=h_{T^\dagger}(b^\dagger, \vartheta^\dagger,
  \InducedMetric^\dagger, k^\dagger)$ using the definition of
  $h_{\TStar}(b,\vartheta,\InducedMetric,k)$ given in
  \eqref{nonlinear:eq:h-def:with-params}.

  Moreover assume that
  $\left(b^\dagger, \vartheta^\dagger, \InducedMetric^\dagger,
    k^\dagger, T^\dagger\right)$ was chosen so that $h^\dagger_{T^\dagger}$
  satisfies the control
  \begin{equation}
    \label{nonlinear:eq:IFT-Lipschitz-Constant:BA}
    \begin{split}
      \sup_{\tStar<T^\dagger+1}e^{\BSDDecay\tStar}\norm*{h^\dagger_{T^\dagger}}_{\HkWithT{\BSLowLvl}(\Sigma_{\tStar})}
      &\le 1,\\
      \sup_{\tStar<T^\dagger+1}e^{-\BSEGrowth\tStar}\norm*{h^\dagger_{T^\dagger}}_{\HkWithT{\BSTopLvl}(\Sigma_{\tStar})} &\le 1,  
    \end{split}    
  \end{equation}  
  and the orthogonality condition \eqref{nonlinear:ORT-T}.

  Then for $T^\dagger$ sufficiently small, 
  \begin{align}    
    \abs*{\left.
    D_{b, \vartheta, \InducedMetric, k, \TStar}\IFTNonlinearQuantity(b, \vartheta, \InducedMetric, k, \TStar)
    \right\vert_{\left(b^\dagger, \vartheta^\dagger, \InducedMetric^\dagger,k^\dagger, T^\dagger\right)}}
    \lesssim 1. \label{nonlinear:eq:IFT:extension:Lipschitz-constant}
  \end{align}
\end{lemma}
\begin{proof}  
  We first consider
  \begin{equation*}
    \left.D_{b,\vartheta}\IFTNonlinearQuantity\right\vert_{\left(b^\dagger, \vartheta^\dagger, \InducedMetric^\dagger,k^\dagger, T^\dagger\right)}
    = \left.D_{b,\vartheta} \lambda_{\Upsilon}[g_b]\left(
        \NCal^{\TStar}_{g_b}(h_{\TStar}, \p h_{\TStar}, \p\p h_{\TStar}), i_{b,\phi}(\InducedMetric, k)\right)
      \right\vert_{\left(b^\dagger, \vartheta^\dagger, \InducedMetric^\dagger,k^\dagger, T^\dagger\right)}. 
  \end{equation*}
  Since $\NCal^{\TStar}_{g_b}(h_{\TStar}, \p h_{\TStar}, \p\p
  h_{\TStar})$ is supported on $[0, \TStar+1]$, we have from Lemma
  \ref{nonlinear:lemma:lambda-b-derivative} and
  \eqref{nonlinear:eq:IFT-Lipschitz-Constant:BA} that 
  \begin{equation*}
    \abs*{\evalAt*{
        (D_{b}\lambda_{\Upsilon}[g_b])\left(
          \NCal^{\TStar}_{g_b}(h_{\TStar}, \p h_{\TStar}, \p\p h_{\TStar}), i_{b,\phi}(\InducedMetric, k)\right)
      }_{\left(b^\dagger, \vartheta^\dagger, \InducedMetric^\dagger, k^\dagger, T^\dagger\right)}}
    \lesssim 1.
  \end{equation*}
  Observe that by construction,
  \begin{equation*}
    \norm*{\left.D_{b,\vartheta,\InducedMetric, k, \TStar}i_{b, \phi}(\InducedMetric, k)\right\vert_{\left(b^\dagger, \vartheta^\dagger, \InducedMetric^\dagger,
        k^\dagger, T^\dagger\right)}}_{\LSolHk{\BSLowLvl+1}(\Sigma_0)}\lesssim 1. 
  \end{equation*}
  We now consider
  $D_{b,\vartheta,\InducedMetric, k, \TStar}\NCal^{\TStar}_{g_b}(h_{\TStar},
  \p h_{\TStar}, \p\p h_{\TStar})$. Recall that up to
  $\TStar$,  the nonlinearity $\NCal^{\TStar}_{g_b}$ can be decomposed as
  \begin{equation*}
    \NCal^{\TStar}_{g_b}(h_{\TStar}, \p h_{\TStar}, \p\p h_{\TStar}) = q(h_{\TStar}, \p h_{\TStar}, \p\p h_{\TStar}) + \tilde{q}(h_{\TStar}, \p h_{\TStar}, \p\p h_{\TStar}),
  \end{equation*}
  where $q(\cdot, \cdot, \cdot)$ and $\tilde{q}(\cdot, \cdot, \cdot)$
  are as defined in \eqref{nonlinear:eq:q-q-tilde:def}, and that on $[\TStar,
  \TStar+1]$, we have the control given in
  \eqref{nonlinear:eq:NCal:TStar-bound}. 

  We show how to deal with $q(\cdot,\cdot,\cdot)$ since this is
  the primary difficulty. First observe that 
  \begin{align*}
    &\norm*{\left.
        D_{b,\vartheta,\InducedMetric, k, \TStar}q\left(h_{\TStar}, \p h_{\TStar}, \p\p h_{\TStar}\right)
      \right\vert_{\left(b^\dagger, \vartheta^\dagger, \InducedMetric^\dagger,k^\dagger, T^\dagger\right)}
      }_{H^{\BSLowLvl-1}(\StaticRegionWithExtension)}\\
    \lesssim{}& \sup_{\tStar\le T^\dagger+1}\norm*{\left.D_{b,\vartheta,\InducedMetric, k, \TStar}
                h_{\TStar}\right\vert_{\left(b^\dagger, \vartheta^\dagger, \InducedMetric^\dagger,k^\dagger, T^\dagger\right)}}_{\HkWithT{\BSLowLvl}(\Sigma_{\tStar})}
                \int_{0}^{T^\dagger+1}\norm*{h_{\TStar}}_{\HkWithT{\BSLowLvl+1}(\Sigma_{\tStar})}\,d\tStar\\
    &+ \sup_{\tStar\le T^\dagger+1}\norm{h_{\TStar}}_{\HkWithT{\BSLowLvl}(\Sigma_{\tStar})}
      \int_{0}^{T^\dagger+1}\norm*{\left.D_{b,\vartheta,\InducedMetric, k, \TStar}h_{\TStar}\right\vert_{\left(b^\dagger, \vartheta^\dagger, \InducedMetric^\dagger,k^\dagger, T^\dagger\right)}}_{\HkWithT{\BSLowLvl+1}(\Sigma_{\tStar})}\,d\tStar.
  \end{align*}
  Thus we see that in order to prove the desired bound in
  \eqref{nonlinear:eq:IFT:extension:Lipschitz-constant}, it suffices to prove
  that
  \begin{equation}
    \label{nonlinear:eq:IFT:extension:Lipschitz-constant:h-bound}
    \sup_{\tStar\le T^\dagger+1}\norm*{
      \left.D_{b,\vartheta,\InducedMetric, k,\TStar}h_{\TStar}
      \right\vert_{\left(b^\dagger,\vartheta^\dagger, \InducedMetric^\dagger,k^\dagger,T^\dagger\right)}
    }_{\HkWithT{\BSLowLvl+1}(\Sigma_{\tStar})}
    \lesssim 1.
  \end{equation}
  To prove \eqref{nonlinear:eq:IFT:extension:Lipschitz-constant:h-bound}, we
  first observe that
  \begin{equation*}
    D_{b,\vartheta,\InducedMetric, k,\TStar}h_{\TStar}
    = \chi_{\TStar}D_{b,\vartheta,\InducedMetric, k}h
    + (1-\chi_{\TStar})D_{b,\vartheta,\InducedMetric, k,\TStar}\widetilde{h}_{\TStar}
    + D_{\TStar}\chi_{\TStar}(h - \widetilde{h}_{\TStar})
    ,
  \end{equation*}
  where 
  $h, \widetilde{h}_{\TStar}$ are as defined in
  \eqref{nonlinear:eq:h-def:with-params}.
  For shorthand, we denote
  \begin{equation*}
    h^\dagger:=h\left(b^\dagger, \vartheta^\dagger, \InducedMetric^\dagger, k^\dagger\right),\qquad
    \widetilde{h}^\dagger:=\widetilde{h}_{T^\dagger}\left(b^\dagger, \vartheta^\dagger, \InducedMetric^\dagger, k^\dagger\right). 
  \end{equation*}
  Using the controls in
  \eqref{nonlinear:eq:IFT-Lipschitz-Constant:BA}, it suffices to
  show that 
  \begin{align}
    \sup_{\tStar\le T^\dagger+1}\norm*{\evalAt*{
    D_{b,\vartheta,\InducedMetric, k,\TStar}h}_{\left(b^\dagger,\vartheta^\dagger, \InducedMetric^\dagger,k^\dagger,T^\dagger\right)}}_{\HkWithT{\BSLowLvl+1}(\Sigma_{\tStar})}
    &\lesssim 1,\label{nonlinear:eq:IFT:extension:Lipschitz-constant:Dh-bound}\\
    \sup_{\tStar\le T^\dagger+1}\norm*{\evalAt*{
    D_{b,\vartheta,\InducedMetric, k,\TStar}\widetilde{h}_{\TStar}}_{\left(b^\dagger,\vartheta^\dagger, \InducedMetric^\dagger,k^\dagger,T^\dagger\right)}}_{\HkWithT{\BSLowLvl+1}(\Sigma_{\tStar})}
    &\lesssim 1 \label{nonlinear:eq:IFT:extension:Lipschitz-constant:Dtildeh-bound}
  \end{align}
  to prove \eqref{nonlinear:eq:IFT:extension:Lipschitz-constant:h-bound}.  We
  first consider
  \eqref{nonlinear:eq:IFT:extension:Lipschitz-constant:Dtildeh-bound} and show
  that in fact it suffices to show
  \eqref{nonlinear:eq:IFT:extension:Lipschitz-constant:Dh-bound}.  Observe that
  we can write the following Cauchy problem for
  $\evalAt*{D_{b,\vartheta,\InducedMetric,
      k,\TStar}\widetilde{h}_{\TStar}}_{\left(b^\dagger,\vartheta^\dagger,
      \InducedMetric^\dagger, k^\dagger, T^\dagger\right)}$.
  \begin{equation*}
    \begin{split}
      \LinEinstein_{g_{b^\dagger}} \evalAt*{D_{b,\vartheta,\InducedMetric, k,\TStar}\widetilde{h}_{\TStar}
      }_{\left(b^\dagger,\vartheta^\dagger, \InducedMetric^\dagger, k^\dagger, T^\dagger\right)}
      &= 0\\
      \gamma_{T^\dagger}\left(\evalAt*{D_{b,\vartheta,\InducedMetric, k,\TStar}\widetilde{h}_{\TStar}
        }_{\left(b^\dagger,\vartheta^\dagger, \InducedMetric^\dagger, k^\dagger, T^\dagger\right)}\right)
    &= \evalAt*{
      D_{b,\vartheta,\InducedMetric, k, \TStar}
        \iota_{b, \TStar}(\evalAt*{h}_{\Sigma_{\TStar}}, \evalAt*{\p_{\tStar}h}_{\Sigma_{\TStar}})
      }_{\left(b^\dagger,\vartheta^\dagger, \InducedMetric^\dagger,k^\dagger,T^\dagger\right)}
      .
    \end{split}
  \end{equation*}
  From the construction of $\iota_{b,\TStar}$ in Proposition
  \ref{nonlinear:prop:Bootstrap:iota-construction:init-data},
  \begin{equation*}
    \begin{split}
      &\norm*{\evalAt*{
      D_{b,\vartheta,\InducedMetric, k, \TStar}
        \iota_{b, \TStar}(\evalAt*{h}_{\Sigma_{\TStar}}, \evalAt*{\p_{\tStar}h}_{\Sigma_{\TStar}})
      }_{\left(b^\dagger,\vartheta^\dagger, \InducedMetric^\dagger,k^\dagger,T^\dagger\right)}
      }_{\LSolHk{\BSLowLvl+1}(\Sigma)}\\
      \lesssim{}& \norm*{h^\dagger}_{\HkWithT{\BSLowLvl+2}(\Sigma_{T^\dagger})}
      + \norm*{\evalAt*{
          D_{b,\vartheta,\InducedMetric, k}h
        }_{\left(b^\dagger, \vartheta^\dagger, \InducedMetric^\dagger, k^\dagger\right)}
      }_{\HkWithT{\BSLowLvl+1}(\Sigma_{T^\dagger})}.
    \end{split}    
  \end{equation*}
  Thus it is clear that in order to prove
  \eqref{nonlinear:eq:IFT:extension:Lipschitz-constant:h-bound}, it suffices to
  prove \eqref{nonlinear:eq:IFT:extension:Lipschitz-constant:Dh-bound}.
  
  To prove  \eqref{nonlinear:eq:IFT:extension:Lipschitz-constant:Dh-bound}, we
  consider separately the derivatives involved.  We will obtain the
  bound in \eqref{nonlinear:eq:IFT:extension:Lipschitz-constant} separately for
  each of the derivatives desired.  First observe that by construction
  of the semi-global extension Cauchy problem, we have that for
  $\tStar\le \TStar$,
  \begin{equation*}
    h(b,\vartheta,\InducedMetric, k) = (\phi^{-1})^*g - g_b, \qquad g = \phi^*(g_b+h_{\TStar}(b,\vartheta,\InducedMetric, k))
  \end{equation*}
  where $\phi = e^{i_{\Theta}\vartheta}$. Then, for
  $\tStar\le \TStar+1$
  \begin{equation}
    \label{nonlinear:eq:IFT:extension:Lipschitz-constant:D-h}
    \left.D_{b,\vartheta}h(b,\vartheta,\InducedMetric, k)\right\vert_{\left(b^\dagger,\vartheta^\dagger, \InducedMetric^\dagger,k^\dagger\right)}(b', \vartheta')
    = (g'_{b^\dagger})^{\Upsilon}(b') + \nabla{g_{b^\dagger}} \otimes\vartheta',
  \end{equation}
  where $(g'_{b^\dagger})^{\Upsilon}(b')$ is as defined in
  \eqref{nonlinear:eq:g-linearized-wave-gauge:def}. We briefly note that the
  appearance of the $\omega^{\Upsilon}_b(b')$ term, reflected in the
  appearance of $(g'_{b^\dagger})^{\Upsilon}(b')$ rather than
  $(g'_{b^\dagger})(b')$ in
  \eqref{nonlinear:eq:IFT:extension:Lipschitz-constant:D-h} is necessary in
  light of the fact that the specific form of the \KdS{} metrics
  defined in \eqref{nonlinear:eq:KdS:regular} did not take any gauge
  considerations into account. 
  
  Since we know that there exists some fixed $\GronwallExp>0$ such that
  \begin{equation*}
    \norm*{(g'_{b^\dagger})^{\Upsilon}(b') + \nabla{g_{b^\dagger}} \otimes\vartheta'}_{H^{\BSTopLvl}(\Sigma_{\tStar})} \lesssim e^{\GronwallExp \tStar},
  \end{equation*}
  we  have that for $T^{\dagger}\ll 1$,
  \begin{equation}
    \label{nonlinear:eq:IFT:extension:Lipschitz-constant:b-theta}
    \sup_{\tStar\le T^{\dagger}}\norm*{\left.D_{b,\vartheta}h_{\TStar}\right\vert_{\left(b^\dagger,\vartheta^\dagger, \InducedMetric^\dagger,k^\dagger,T^\dagger\right)}}_{\HkWithT{\BSLowLvl+1}(\Sigma_{\tStar})}\lesssim 1. 
  \end{equation}

  Now let us consider
  $\evalAt*{D_{\InducedMetric, k}h}_{\left(b^\dagger,
      \vartheta^\dagger, \InducedMetric^\dagger, k^\dagger\right)}$.
  Observe that
  $\evalAt*{D_{\InducedMetric, k}h}_{\left(b^\dagger,
      \vartheta^\dagger, \InducedMetric^\dagger, k^\dagger\right)}$
  solves the equation
  \begin{equation*}
    \begin{split}
      \LinEinsteinS_{g^\dagger}\left. D_{\InducedMetric, k}h \right\vert_{\left(b^\dagger,\vartheta^\dagger, \InducedMetric^\dagger,k^\dagger\right)}
      &= \left.\mathcal{P}\left(h,  D_{\InducedMetric, k}h \right)\right\vert_{\left(b^\dagger,\vartheta^\dagger, \InducedMetric^\dagger,k^\dagger\right)},\\
      \gamma_0(h) &= \left.D_{\InducedMetric, k}i_{b^\dagger,\phi^\dagger}(\InducedMetric, k)\right\vert_{\left(b^\dagger,\vartheta^\dagger, \InducedMetric^\dagger,k^\dagger\right)},
    \end{split}
  \end{equation*}
  where  
  \begin{equation}
     \label{nonlinear:eq:IFT:extension:Lipschitz-constant:P-cal-def}
    \mathcal{P}\left(h, D_{\InducedMetric,k}h\right)
    :={} D_{\InducedMetric,k}\QCal(h, \p h)          
    - D_{\InducedMetric,k}\SubPOp_{g}[h]
    - D_{\InducedMetric,k}\PotentialOp_{g}[h]
    + D_{\InducedMetric,k}\ScalarWaveOp[g]h
    .  
  \end{equation}
  Then directly using local existence, we have that
  \begin{align*}
    \sup_{\tStar\le T^\dagger} \norm*{\evalAt*{D_{\InducedMetric,k}h}_{\left(b^\dagger, \vartheta^\dagger, \InducedMetric^\dagger, k^\dagger\right)}}_{\HkWithT{\BSLowLvl+1}(\Sigma_{\tStar})}
    \lesssim{}& \norm*{\evalAt*{D_{\InducedMetric,k}i_{b^\dagger, \phi^\dagger}(\InducedMetric, k)}_{\left(b^\dagger, \vartheta^\dagger, \InducedMetric^\dagger, k^\dagger\right)} }_{\LSolHk{\BSLowLvl+1}(\Sigma_0)} \lesssim 1.\\      
  \end{align*}
  This proves \eqref{nonlinear:eq:IFT:extension:Lipschitz-constant:Dh-bound} and
  consequently concludes the proof of Lemma \ref{nonlinear:lemma:IFT-Lipchitz-Constant}.    
\end{proof}

We are now ready to prove Proposition \ref{nonlinear:prop:BA-justify}.
\begin{proof}[Proof of Proposition \ref{nonlinear:prop:BA-justify}]
  We first show that if $\widecheck{T}$ is sufficiently small and there
  exists a choice of $(\check{b}, \check{\vartheta})$ such that
  \eqref{nonlinear:ORT-T} is satisfied for $(\check{b}, \check{\vartheta},
  \InducedMetric_0, k_0, \widecheck{T})$, and in addition we have that
  \begin{equation}
    \label{nonlinear:eq:BA-justify:check-b-theta-smallness}
    \norm*{\InducedMetric_0-\widecheck{\phi}^*\InducedMetric_{{\check{b}}}}_{H^{\BSTopLvl}(\Sigma_0)}
    + \norm*{k_0 - \widecheck{\phi}^*k_{{\check{b}}}}_{H^{\BSTopLvl-1}(\Sigma_0)}
    \lesssim \BSConstant^{\frac{4}{3}},\qquad
    \widecheck{\phi} = e^{i_{\Theta}\check{\vartheta}},
  \end{equation}
  then for sufficiently small $\BSConstant$, we also have that
  \eqref{nonlinear:BA-D} and \eqref{nonlinear:BA-E} are satisfied.  To begin, observe that
  using local existence
  theory, we have if \eqref{nonlinear:eq:BA-justify:check-b-theta-smallness}
  holds, then for $\delta_0$ sufficiently small,
  \begin{equation}
    \label{nonlinear:eq:BA-justify:local-existence-1+delta}
    \sup_{0\le\tStar\le 1+\delta_0} e^{-\BSEGrowth\tStar}\norm{\check{h}_{\widecheck{T}}}_{\HkWithT{\BSTopLvl}(\Sigma_{\tStar})}
    +  \sup_{0\le\tStar\le 1+\delta_0} e^{\BSDDecay\tStar}\norm{\check{h}_{\widecheck{T}}}_{\HkWithT{\BSLowLvl+1}(\Sigma_{\tStar})}
    \lesssim \BSConstant^{\frac{4}{3}}.
  \end{equation}
  Since \eqref{nonlinear:ORT-T} is satisfied for
  $(\check{b}, \check{\vartheta}, \InducedMetric_0, k_0,
  \widecheck{T})$, we can apply Propositions
  \ref{nonlinear:prop:orthogonality-condition} and
  \ref{nonlinear:prop:BA-E:lossless-exp-grow-ILED} to deduce that on
  $\tStar>1+\delta_0$,
  \begin{align*}
    \sup_{\tStar>1+\delta_0}e^{-\BSEGrowth\tStar}\norm*{\check{h}_{\widecheck{T}}}_{\HkWithT{\BSTopLvl}(\Sigma_{\tStar})}
    &\lesssim e^{-\BSEGrowth(1+\delta_0)}\norm*{\check{h}_{\widecheck{T}}}_{\HkWithT{\BSTopLvl}(\Sigma_{\widecheck{T}})}\\
    \sup_{\tStar>1+\delta_0}e^{\BSDDecay\tStar}\norm*{\check{h}_{\widecheck{T}}}_{\HkWithT{\BSLowLvl}(\Sigma_{\tStar})}
    &\lesssim e^{\BSDDecay(1+\delta_0)}\norm*{\check{h}_{\widecheck{T}}}_{\HkWithT{\BSLowLvl+1}(\Sigma_{\widecheck{T}})}.
  \end{align*}
  Then using \eqref{nonlinear:eq:BA-justify:local-existence-1+delta}, we have
  that for $\varepsilon_0$, and consequentially $\BSConstant$
  sufficiently small, \eqref{nonlinear:BA-E} and \eqref{nonlinear:BA-D} are satisfied as
  desired.
  
  We now prove that we can indeed find some choice of
  $(\check{b}, \check{\theta}, \widecheck{T})$
  such that the orthogonality condition \eqref{nonlinear:ORT-T}, the smallness
  condition \eqref{nonlinear:eq:BA-justify:check-b-theta-smallness}, and the
  bootstrap assumption \eqref{nonlinear:BA-G} are satisfied. The main idea will
  be to apply Lemma \ref{nonlinear:lemma:IFT-app:fixed-f}.  To this end, let us
  first define
  \begin{gather*}
    \IFTNonlinearQuantity_{init}:
    \Real^4 \times \Real^{N_{\Theta}}
    \times H^{\BSLowLvl}(\Sigma_0)\times H^{\BSLowLvl-1}(\Sigma_0)
    \times \Real^+
    \to \Real^{4+N_{\Theta}},\\
    \IFTNonlinearQuantity_{init}(b, \vartheta, \InducedMetric, k, \TStar)=\lambda_{\Upsilon}[g_b]\left(\NCal^{\TStar}_{g_b}(h_{\TStar}, \p h_{\TStar}, \p\p h_{\TStar}), i_{b,\phi}(\InducedMetric, k)\right),
  \end{gather*}
  where $\phi=e^{i_\Theta\vartheta}$, and
  $h_{\TStar}=h_{\TStar}(b,\vartheta, \InducedMetric, k)$ is as defined
  in \eqref{nonlinear:eq:h-def:with-params}. We
  will show that there exists some $\delta_{0}>0$ sufficiently small
  such that for
  \begin{equation}
    \label{nonlinear:eq:BA-justify:smallness-aux}
    \norm*{\InducedMetric - \InducedMetric_{b^0}}_{H^{\BSLowLvl}(\Sigma_0)}
    +\norm*{k - k_{b^0}}_{ H^{\BSLowLvl-1}(\Sigma_0)}
    + \TStar < \delta_{0}, 
  \end{equation}
  there exists a $C^1$ mapping
  $\IFTInverse_{init}: H^{\BSLowLvl}(\Sigma_0) \times H^{\BSLowLvl-1}(\Sigma_0)
  \times\Real^+ \to \Real^{4+N_{\Theta}}$
  such that 
  \begin{equation*}
    \IFTNonlinearQuantity_{init}\left(\IFTInverse_{init}( \InducedMetric, k, \TStar),  \InducedMetric, k, \TStar\right) = 0
  \end{equation*}
  for $(\InducedMetric, k, \TStar)$ satisfying
  \eqref{nonlinear:eq:BA-justify:smallness-aux}. Observe that using the
  construction and estimates in Lemma
  \ref{nonlinear:lemma:IFT-init:nbhd-construction}, it is clear that
  $\IFTNonlinearQuantity_{init}$ is $C^1$ in the arguments for
  $(b,\vartheta,\TStar)\in \mathcal{B}_{init}\times (0,
  \delta_{init})$, and $(\InducedMetric, k)$ satisfying
  \eqref{nonlinear:eq:main:init-data-smallness}. Moreover, observe that if
  $(b, \vartheta, \InducedMetric, k) = (b^0,0, \InducedMetric_{b^0},
  k_{b^0})$, then for any $\TStar>0$,
  $h_{\TStar}(b^0,0, \InducedMetric_{b^0}, k_{b^0}) = 0$ is the
  solution of \eqref{nonlinear:eq:EVE:bootstrap-system}. Keeping this in mind,
  we can calculate
  \begin{align*}
    \left.D_{b,\vartheta}{\NCal}_{g_b}^{\TStar}(h_{\TStar}, \p h_{\TStar}, \p\p h_{\TStar}) \right\vert_{(b^0, 0, \InducedMetric_{b^0}, k_{b^0}, 0)}(b', \vartheta') & = 0.    
  \end{align*}
  Applying Lemma \ref{nonlinear:lemma:IFT-app:fixed-f}, with the choice
  \begin{equation*}
    \left(b^\dagger, \vartheta^\dagger, \InducedMetric^\dagger, k^\dagger, T^\dagger\right)
    = \left(b^0, 0, \InducedMetric_{b^0}, k_{b^0}, 0\right)
  \end{equation*}
  we see that there exists some $\delta_0$ sufficiently small such
  that defining
  \begin{equation*}
   \mathcal{X}_{init}\subset H^{\BSLowLvl}(\Sigma_0)\times H^{\BSLowLvl-1}(\Sigma_0)\times \Real^+ ,
  \end{equation*}
  where 
  \begin{equation*}    
    \begin{split}
      \mathcal{X}_{init}
    :={}& \curlyBrace*{(\InducedMetric, k, \TStar):      
      \norm*{\InducedMetric - \InducedMetric_{b^0}}_{H^{\BSLowLvl}(\Sigma_0)}
      + \norm*{k - k_{b^0}}_{H^{\BSLowLvl-1}(\Sigma_0)}
      + \abs{\TStar} < \delta_0},    
    \end{split}       
  \end{equation*}
  there exists a $C^1$ mapping 
  \begin{equation*}
    \mathcal{G}_{init}: \mathcal{X}_{init}\to \mathcal{B}_{init}
  \end{equation*}
  such that for all $(\InducedMetric, k, \TStar)\in
  \mathcal{X}_{init}$,
  \begin{equation*}
    \mathcal{F}_{init}(\mathcal{G}_{init}(\InducedMetric, k, \TStar),\InducedMetric, k, \TStar) = 0.
  \end{equation*}
  As a result, for all
  $(\InducedMetric, k, \TStar)\in \mathcal{X}_{init}$, we have that
  $h_{\TStar}(\mathcal{G}_{init}(\InducedMetric, k,
  \TStar),\InducedMetric, k)$ satisfies \eqref{nonlinear:ORT-T}.

  It remains to control $\abs*{b-b^0} + \abs*{\vartheta}$ so that we
  can guarantee \eqref{nonlinear:eq:BA-justify:check-b-theta-smallness}, and
  \eqref{nonlinear:BA-G}. To this end, we apply Lemma
  \ref{nonlinear:lemma:IFT-Lipchitz-Constant}. From Lemma
  \ref{nonlinear:lemma:IFT-init:nbhd-construction}, we have that for
  $(b,\vartheta,\InducedMetric, k, \TStar)\in \mathcal{B}_{init}\times
  \mathcal{X}_{init}$, the controls in
  \eqref{nonlinear:eq:IFT-Lipschitz-Constant:BA} are satisfied. As a result, for
  \begin{equation*}
    \left(\IFTInverse_{init}\left(\widecheck{\InducedMetric}, \widecheck{k},\widecheck{T}\right), \widecheck{\InducedMetric}, \widecheck{k},\widecheck{T}\right) \in  \mathcal{B}_{init}\times
    \mathcal{X}_{init},
  \end{equation*}
  we have from Lemma \ref{nonlinear:lemma:IFT-Lipchitz-Constant} that for
  $\widecheck{T}\ll 1$ sufficiently small,
  \begin{equation}
    \label{nonlinear:eq:IFT-Lipschitz-Constant:F-deriv-aux-1}
    \begin{split}
      \abs*{D_{\InducedMetric, k, \TStar}\IFTNonlinearQuantity_{init}\left(
    \IFTInverse_{init}\left(\widecheck{\InducedMetric}, \widecheck{k},\widecheck{T}\right), \widecheck{\InducedMetric}, \widecheck{k},\widecheck{T}
    \right)}&\lesssim 1,\\
    \abs*{D_{b,\vartheta}\IFTNonlinearQuantity_{init}\left(
    \IFTInverse_{init}\left(\widecheck{\InducedMetric}, \widecheck{k},\widecheck{T}\right), \widecheck{\InducedMetric}, \widecheck{k},\widecheck{T}
    \right)^{-1}}&\lesssim 1. 
    \end{split}    
  \end{equation}  
  From the chain rule, we have that
  \begin{align*}
    D_{\InducedMetric, k, \TStar} \IFTInverse_{init}\left(\widecheck{\InducedMetric}, \widecheck{k}, \widecheck{T}\right)
    ={}& D_{b, \vartheta}\IFTNonlinearQuantity_{init}\left(\IFTInverse_{init}\left(\widecheck{\InducedMetric}, \widecheck{k},\widecheck{T}\right), \widecheck{\InducedMetric}, \widecheck{k},\widecheck{T}\right)^{-1}
         D_{\InducedMetric, k, \TStar}\IFTNonlinearQuantity_{init}\left(\IFTInverse_{init}\left(\widecheck{\InducedMetric}, \widecheck{k},\widecheck{T}\right), \widecheck{\InducedMetric}, \widecheck{k},\widecheck{T}
      \right).
  \end{align*}
  Then for
  $\left(\widecheck{\InducedMetric},\widecheck{k},\widecheck{T}\right)\in
  \mathcal{X}_{init}$, denote
  $(\check{b},
  \check{\vartheta})=\IFTInverse\left(\widecheck{\InducedMetric},\widecheck{k},\widecheck{T}\right)$. From
  the $C^1$ nature of $\IFTInverse_{init}$ and
  \eqref{nonlinear:eq:IFT-Lipschitz-Constant:F-deriv-aux-1},
  \begin{align}
    \abs*{\check{b}-b^0} + \abs*{\check{\vartheta}}
    &\le \sup_{\mathcal{X}_{init}}\abs*{D_{\InducedMetric, k, \TStar}\IFTInverse_{init}}\left(
      \norm*{\widecheck{\InducedMetric}-\InducedMetric_{b^0}}_{H^{\BSLowLvl}(\Sigma_0)}
      + \norm*{\widecheck{k}-k_{b^0}}_{H^{\BSLowLvl-1}(\Sigma_0)}
      + \abs*{\widecheck{T}}
      \right)\notag \\
    &\le C\left(\norm*{\widecheck{\InducedMetric}-\InducedMetric_{b^0}}_{H^{\BSLowLvl}(\Sigma_0)}
      + \norm*{\widecheck{k}-k_{b^0}}_{H^{\BSLowLvl-1}(\Sigma_0)}
      + \abs*{\widecheck{T}}\right), \label{nonlinear:eq:IFT-init:b-theta:bound}    
  \end{align}
  where $C$ is an implicit constant independent of $\delta_0$,
  $\BSConstant$, and $\varepsilon_0$. 
  Now choosing $\BSConstant^{\frac{4}{3}} = \delta<\delta_0$, and making
  $\delta$ smaller as necessary, we can in fact guarantee that
  \begin{equation*}
    2C\varepsilon_0 \le \delta_0. 
  \end{equation*}
  Thus, choosing $T_0 = \frac{1}{2C}\varepsilon_0$, we have
  in fact that
  \begin{equation*}
    (\InducedMetric_0, k_0, T_0) \in \mathcal{X}_{init}.
  \end{equation*}
  As a result, denoting
  \begin{equation*}
    (b_{T_0}, \vartheta_{T_0}):=\IFTInverse_{init}\left(\InducedMetric_0, k_0, T_0\right),
  \end{equation*}
  it is clear that \eqref{nonlinear:eq:IFT-init:b-theta:bound} implies that 
  \begin{equation*}
    \abs*{b_{T_0}-b_0} + \abs*{\vartheta_{T_0}} \le \BSConstant^{\frac{4}{3}}. 
  \end{equation*}
  This shows that \eqref{nonlinear:BA-G} holds.  Finally, it remains to show
  that we can fulfill
  \eqref{nonlinear:eq:BA-justify:check-b-theta-smallness}. Denoting
  $\phi_{T_0} = e^{i_{\Theta}\vartheta_{T_0}}$, observe
  that
  \begin{align*}
     &\norm*{\InducedMetric_0-\phi_{T_0}^*\InducedMetric_{b_{T_0}}}_{H^{\BSTopLvl}(\Sigma_0)}
     + \norm*{k_0 - \phi_{T_0}^*k_{b_{T_0}}}_{H^{\BSTopLvl-1}(\Sigma_0)}\\
     \lesssim{}&
     \norm*{\InducedMetric_0-\InducedMetric_{b^0}}_{H^{\BSTopLvl}(\Sigma_0)}
     + \norm*{k_0 - k_{b^0}}_{H^{\BSTopLvl-1}(\Sigma_0)}\\
     &+ \norm*{\InducedMetric_{b^0}-\phi_{T_0}^*\InducedMetric_{b_{T_0}}}_{H^{\BSTopLvl}(\Sigma_0)}
     + \norm*{k_{b^0} - \phi_{T_0}^*k_{b_{T_0}}}_{H^{\BSTopLvl-1}(\Sigma_0)}\\
     \lesssim{}& \BSConstant^{\frac{4}{3}}.
  \end{align*}
  Thus \eqref{nonlinear:eq:BA-justify:check-b-theta-smallness} holds, and so do
  \eqref{nonlinear:BA-D}, \eqref{nonlinear:BA-E} by the argument made at the beginning of
  the proof. This concludes the proof of Proposition
  \ref{nonlinear:prop:BA-justify}.
\end{proof}

\section{Proof of Theorem \ref{nonlinear:thm:main}}
\label{nonlinear:sec:bs}

This section is devoted to the proof of Theorem \ref{nonlinear:thm:main}. Recall
from Proposition \ref{nonlinear:prop:BA-justify} that the bootstrap assumptions
\eqref{nonlinear:BA-D}, \eqref{nonlinear:BA-E}, \eqref{nonlinear:BA-G}, and the orthogonality
condition \eqref{nonlinear:ORT-T} hold for some $\TStar>0$ sufficiently
small. We now consider any bootstrap time $\TStar>0$ and show that
\eqref{nonlinear:BA-D}, \eqref{nonlinear:BA-E}, \eqref{nonlinear:BA-G} can be improved, and thus,
that the bootstrap time $\TStar$ can be extended slightly. We will
then conclude the continuity argument in Section
\ref{nonlinear:sec:closing-proof}.

\subsection{Improving the bootstrap assumption on decay}
\label{nonlinear:sec:bs-decay}

We first improve the bootstrap assumption on decay \eqref{nonlinear:BA-D}.  The
main goal of this section is the following proposition.
\begin{prop}
  \label{nonlinear:prop:BA-D:close}
  Let $\TStar>0$ be the bootstrap time, for which there exist
  slowly-rotating \KdS{} black hole parameters $b_{\TStar}$ and a
  diffeomorphism $\phi_{\TStar}=e^{i_{\Theta}\vartheta_{\TStar}}$ such
  that the solution
  $h_{\TStar} = h_{\TStar}(b,\vartheta,\InducedMetric_0, k_0)$ to
  \eqref{nonlinear:eq:EVE:bootstrap-system} satisfies the bootstrap assumptions
  given in \eqref{nonlinear:BA-E}, \eqref{nonlinear:BA-D}, \eqref{nonlinear:BA-G}, and
  \eqref{nonlinear:ORT-T}. Then in fact, $h_{\TStar}$ satisfies the improved
  estimate
  \begin{equation}
    \label{nonlinear:eq:BA-D:improved}
    \sup_{\tStar>0}e^{\BSDDecay \tStar} \norm*{h_{\TStar}}_{\HkWithT{j}(\Sigma_{\tStar})}
    \lesssim \varepsilon_0 + \BSConstant^2, \qquad j\le N_d.
  \end{equation}
\end{prop}

\begin{proof}
  Recall that by assumption, we have $b_{\TStar}, \vartheta_{\TStar},
  h_{\TStar}, \TStar$ such that \eqref{nonlinear:ORT-T} is satisfied. As a
  result, using Proposition \ref{nonlinear:prop:orthogonality-condition} we have that for any $k\le N_d$,
  \begin{equation*}
    e^{\BSDDecay\tStar}\norm*{h_{\TStar} (\tStar, \cdot)}_{\HkWithT{k}(\Sigma_{\tStar})}
    \lesssim \norm*{\left(\NCal_{g_{b_{\TStar}}}^{\TStar}( h_{\TStar} , \p h_{\TStar} , \p\p h_{\TStar} ), h_0, h_1\right)}_{D^{k,\BSDDecay,1}(\StaticRegionWithExtension)},
  \end{equation*}
  where $(h_0, h_1) = i_{b_{\TStar},\phi_{\TStar}}(g_0, k_0)$. Then,
  using the control that we have for
  $\NCal_{g_{b_{\TStar}}}^{\TStar}$ in Proposition \ref{nonlinear:prop:N-control}, we have that
  \begin{align*}
    \int_{\Real^+} e^{\BSDDecay \tStar}\norm*{\NCal_{g_{b_{\TStar}}}^{\TStar}(h_{\TStar}, \p h_{\TStar}, \p\p h_{\TStar})}_{\HkWithT{k}(\Sigma_{\tStar})}\,d\tStar
    \lesssim &{}
               \int_0^{\TStar} e^{\BSDDecay \tStar}\norm{q(h_{\TStar},\p h_{\TStar}, \p\p h_{\TStar})}_{\HkWithT{k}(\Sigma_{\tStar})}\,d\tStar\\
             &+\int_{\TStar}^{\TStar+1} e^{\BSDDecay \tStar}\norm{q_{\TStar}(h_{\TStar},\p h_{\TStar}, \p\p h_{\TStar})}_{\HkWithT{k}(\Sigma_{\tStar})}\,d\tStar\\
             &+\int_0^{\TStar} e^{\BSDDecay \tStar}\norm{\tilde{q}(h_{\TStar},\p h_{\TStar}, \p\p h_{\TStar})}_{\HkWithT{k}(\Sigma_{\tStar})}\,d\tStar\\ 
             &+\int_{\TStar}^{\TStar+1} e^{\BSDDecay \tStar}\norm{\tilde{q}_{\TStar}(h_{\TStar},\p h_{\TStar}, \p\p h_{\TStar})}_{\HkWithT{k}(\Sigma_{\tStar})}\,d\tStar.  
  \end{align*}
  We show how to control the quadratic terms $q, q_{\TStar}$ first
  since they are the main difficulty. For $k\le \BSLowLvl$, using that
  $\HkWithT{\BSLowLvl}(\Sigma)$ is an algebra since $\BSLowLvl\ge 2$,
  we have that
  \begin{align}    
    &\int_0^{\TStar} e^{\BSDDecay \tStar}\norm{q(h_{\TStar},\p h_{\TStar}, \p\p h_{\TStar})}_{\HkWithT{k}(\Sigma_{\tStar})}\,d\tStar
      +\int_{\TStar}^{\TStar+1} e^{\BSDDecay \tStar}\norm{q_{\TStar}(h_{\TStar},\p h_{\TStar}, \p\p h_{\TStar})}_{\HkWithT{k}(\Sigma_{\tStar})}\,d\tStar\notag \\
    \lesssim{}& \int_{\Real^+} e^{\BSDDecay\tStar} \norm{h_{\TStar} }_{\HkWithT{\BSLowLvl+2}(\Sigma_{\tStar})}\norm{h_{\TStar} }_{\HkWithT{\BSLowLvl}(\Sigma_{\tStar})}\,d\tStar\label{nonlinear:eq:BA-D:aux1}.
  \end{align}
  Then, we control
  $e^{\BSDDecay
    \tStar}\norm{h_{\TStar}}_{\HkWithT{\BSLowLvl}(\Sigma_{\tStar})}$
  using the bootstrap assumption
  \eqref{nonlinear:BA-D}. 
  To handle the
  $\norm{h_{\TStar} }_{\HkWithT{\BSLowLvl+2}(\Sigma_{\tStar})}$ term, we
  interpolate between the $\HkWithT{\BSTopLvl}(\Sigma)$ and
  $\HkWithT{\BSLowLvl}(\Sigma)$ norms, observing that
  \begin{equation}
    \label{nonlinear:eq:BA-D:interpolation-step-1}
    \norm*{ h_{\TStar}}_{\HkWithT{\BSLowLvl+2}(\Sigma_{\tStar})}
    \lesssim{} \norm*{ h_{\TStar} }_{\HkWithT{\BSLowLvl}(\Sigma_{\tStar})}^{1-\theta}\norm*{ h_{\TStar} }_{\HkWithT{\BSTopLvl}(\Sigma_{\tStar})}^{\theta},\qquad \theta = \frac{2}{\BSTopLvl-\BSLowLvl}.
  \end{equation}
  As a result, we have that
  \begin{align}
    \int_{\Real^+}\norm*{ h_{\TStar} }_{\HkWithT{\BSLowLvl+2}(\Sigma_{\tStar})}\,d\tStar
    &\lesssim \int_{\Real^+} e^{(\BSEGrowth\theta - (1-\theta)\BSDDecay)\tStar}
      e^{-\BSEGrowth\theta\tStar}\norm{h_{\TStar} }_{\HkWithT{\BSTopLvl}(\Sigma_{\tStar})}
      e^{\BSDDecay(1-\theta)\tStar}\norm{h_{\TStar} }_{\HkWithT{\BSLowLvl}(\Sigma_{\tStar})} \, d\tStar\notag \\
    &\lesssim \BSConstant \norm*{e^{(\BSEGrowth\theta - (1-\theta)\BSDDecay)\tStar}}_{L^2(\StaticRegionWithExtension)}, \label{nonlinear:eq:BA-D:interpolation-conclusion}
  \end{align}
  where the last line follows by applying the bootstrap
  assumptions in \eqref{nonlinear:BA-E}, and \eqref{nonlinear:BA-D}.  Moreover, recall
  that we chose $\BSTopLvl - \BSLowLvl>2$, and
  $2\BSEGrowth < (\BSTopLvl - \BSLowLvl - 2)\BSDDecay$. Thus,
  \begin{equation}
    \label{nonlinear:eq:BA-D:interpolation-integrability}
    \BSEGrowth\theta - (1-\theta)\BSDDecay < 0,
  \end{equation}
  and plugging \eqref{nonlinear:eq:BA-D:interpolation-conclusion} and \eqref{nonlinear:BA-D}
  in to \eqref{nonlinear:eq:BA-D:aux1} and taking into account
  \eqref{nonlinear:eq:BA-D:interpolation-integrability}, we have that for $k\le \BSLowLvl$,
  \begin{equation}
    \label{nonlinear:eq:Decay:NL-BS}
    \int_{0}^{\TStar} e^{\BSDDecay\tStar}\norm{q(h_{\TStar} , \p h_{\TStar} , \p\p h_{\TStar} )}_{\HkWithT{k}(\Sigma_{\tStar})}\,d\tStar
    + \int_{\TStar}^{\TStar+1} e^{\BSDDecay\tStar}\norm{q_{\TStar}(h_{\TStar} , \p h_{\TStar} , \p\p h_{\TStar} )}_{\HkWithT{k}(\Sigma_{\tStar})}\,d\tStar
    \lesssim \BSConstant^2. 
  \end{equation}
  The higher-order nonlinear terms can be handled similarly.
  Recall from the definition of
  $D^{k,\BSDDecay,1}(\StaticRegionWithExtension)$ in
  \eqref{nonlinear:eq:D-kam-norm} that
  \begin{align*}
    &\norm*{\left(\NCal_{g_{b_{\TStar}}}^{\TStar}( h_{\TStar} , \p h_{\TStar} , \p\p h_{\TStar} ), h_0, h_1\right)}_{D^{k,\BSDDecay,1}(\StaticRegionWithExtension)}\\
    \lesssim{}&
                \norm*{\NCal_{g_{b_{\TStar}}}^{\TStar}( h_{\TStar} , \p h_{\TStar} , \p\p h_{\TStar} )}_{H^{k, \BSDDecay}(\StaticRegionWithExtension)}
                + \norm*{i_{b_{\TStar}, \phi_{\TStar}}(\InducedMetric_0, k_0)}_{\LSolHk{k+1}(\Sigma_0)}.
  \end{align*}
  Using \eqref{nonlinear:eq:Decay:NL-BS}, we then have that for $k\le \BSLowLvl$,
  \begin{equation*}
    e^{\BSDDecay\tStar}\norm*{h_{\TStar} }_{\HkWithT{k}(\Sigma_{\tStar})}\lesssim\varepsilon_0 + \BSConstant^2,
  \end{equation*}
  improving the bootstrap assumption \eqref{nonlinear:BA-D} as desired and concluding the
  proof of Proposition \ref{nonlinear:prop:BA-D:close}.
\end{proof}

\begin{remark}
  The fact that we have to estimate
  $\norm{h_{\TStar} }_{\HkWithT{\BSLowLvl+2}(\Sigma)}$ terms in
  \eqref{nonlinear:eq:BA-D:interpolation-conclusion} reflects two different
  losses of derivatives. The first is due to
  derivative loss in the Morawetz estimate in the underlying estimate, which is due
  to the presence of the trapped set in \KdS. The second derivative
  loss comes from treating the quasilinear terms as a forcing term on
  the right-hand side.
\end{remark}

\subsection{Improving the bootstrap assumption on energy}
\label{nonlinear:sec:bs-energy}

The goal of this section is to improve the high-regularity
exponential-growing bootstrap assumption in \eqref{nonlinear:BA-E}.
The main proposition is as follows. 
\begin{prop}
  \label{nonlinear:prop:BA-E:close}
  Fix some $\TStar>0$, for which there exist slowly-rotating \KdS{}
  black hole parameters $b_{\TStar}$ and a diffeomorphism
  $\phi_{\TStar} = e^{i_{\Theta}\vartheta_{\TStar}}$ such that letting
  $h_{\TStar}=h_{\TStar}(b,\vartheta,\InducedMetric_0, k_0)$ be the
  solution to the Cauchy problem in \eqref{nonlinear:eq:EVE:bootstrap-system},
  the bootstrap assumptions given in \eqref{nonlinear:BA-E}, \eqref{nonlinear:BA-D},
  \eqref{nonlinear:BA-G}, and the orthogonality condition \eqref{nonlinear:ORT-T} are
  satisfied. Then in fact, $h_{\TStar}$ satisfies the improved
  estimate
  \begin{equation}
    \label{nonlinear:eq:BA-E:improved}
    \norm*{e^{-\BSEGrowth \tStar} h_{\TStar}}_{H^j(\StaticRegionWithExtension)}
    \lesssim \varepsilon_0 + \BSConstant^2, \qquad j\le \BSTopLvl. 
  \end{equation}
\end{prop}

In order to prove \eqref{nonlinear:eq:BA-E:improved}, we need an estimate that
does not lose derivatives. To this end, we cannot rely on an estimate
on exact \KdS, and must use an estimate on a perturbation of
\KdS. This is where Proposition \ref{nonlinear:prop:BA-E:lossless-exp-grow-ILED}
comes in.

First, we write down the commuted system of equations.
\begin{lemma}
  \label{nonlinear:lemma:BA-E:commuted-eqn}
  Let $g = g_b+\tilde{g}$, where $g_b$ is a slowly-rotating \KdS{}
  metric. Moreover, let $h$ be a solution to the Cauchy problem given
  by
  \begin{align*}
    \mathbf{L}_gh &= f,\\
    \gamma_0(h) &= (h_0, h_1).
  \end{align*}
  Then for any multi-index $|\alpha|=k$,
  \begin{equation}
    \label{nonlinear:eq:BA-E:commuted-eqn}
    \mathbf{L}_g\RedShiftK^\alpha h = \RedShiftK^\alpha f + \tilde{f},
  \end{equation}
  where
  \begin{equation}
    \label{nonlinear:eq:BA-E:tilde-f-control}
    \norm*{\tilde{f}}_{L^2(\Sigma)}
    \lesssim
    \norm{\tilde{g}}_{H^3(\Sigma)}\norm{h}_{\HkWithT{k+1}(\Sigma)}
    + \norm{h}_{H^3(\Sigma)}\norm{\tilde{g}}_{\HkWithT{k+1}(\Sigma)}.    
  \end{equation}
\end{lemma}
\begin{proof}
  Recall from the definition of $\mathbf{L}_g$ in
  \eqref{nonlinear:eq:EVE:quasilinear-LinEinstein-def} that
  \begin{equation*}
    \mathbf{L}_g = \ScalarWaveOp[g] + \SubPOp_g + \PotentialOp_g,
  \end{equation*}
  where the coefficients of both $\SubPOp_g$ and $\PotentialOp_g$
  depend on at most one derivative of $g$. Then, commuting
  $\mathbf{L}_g$ and $\RedShiftK^\alpha$, we have that
  \begin{equation*}
    \left[\mathbf{L}_g, \RedShiftK^\alpha\right] = \sum_{|\beta|+|\gamma|\le k+2, 1\le |\beta|\le k+1} \tilde{k}_{\beta\gamma}(t,x)\p^\beta\tilde{g}\p^\gamma,
  \end{equation*}
  where $\tilde{k}_{\beta\gamma}$ are smooth bounded functions. 
  The conclusion then follows from the fact that $\Sigma$ is
  $3$-dimensional and an application of standard Sobolev embedding. 
\end{proof}

  

Since at the level of improving the high-regularity bootstrap, we are
forced to work with estimates that do not lose derivatives, it will be
convenient to consider a slightly different semi-global extension.

\begin{lemma}
  \label{nonlinear:lemma:new-extension}
  Let $h_{\TStar}$ be the solution to the Cauchy problem in
  \eqref{nonlinear:eq:EVE:bootstrap-system}, and assume that the bootstrap
  assumptions given in \eqref{nonlinear:BA-E}, \eqref{nonlinear:BA-D}, \eqref{nonlinear:BA-G}, and
  the orthogonality condition \eqref{nonlinear:ORT-T} are satisfied.

  Then there exists some $\tilde{z}$ such that the solution
  $\mathfrak{h}_{\TStar}$ to the Cauchy problem 
  \begin{equation}
    \label{nonlinear:eq:new-extension:cauchy-problem}
    \begin{split}
      \LinEinsteinS_{\chi_{\TStar-1} g_{\TStar} + (1-\chi_{\TStar-1})g_{b_{\TStar}}}\mathfrak{h}_{\TStar}
      &= \chi_{\TStar-1}\mathcal{Q}_{g_{\TStar}}(\mathfrak{h}_{\TStar}, \p\mathfrak{h}_{\TStar})
        + \tilde{z}\\
      \evalAt*{(\mathfrak{h}_{\TStar}(\tStar, \cdot), \p_{\tStar}\mathfrak{h}_{\TStar}(\tStar, \cdot))}_{\tStar = 0} &= (h_0, h_1),
    \end{split}
  \end{equation}
  where $\chi_{\TStar-1}(\tStar)$ is a smooth cutoff function as
  specified in \eqref{nonlinear:eq:chi-tStar-def} and $g_{\TStar} := g_{b_{\TStar}}
  + h_{\TStar}$, satisfies
  \begin{equation}
    \label{nonlinear:eq:new-extension:main-control}
    \norm*{e^{-\BSEGrowth\tStar}\mathfrak{h}_{\TStar}}_{H^{\BSTopLvl}(\StaticRegionWithExtension)}
    \lesssim \varepsilon_0 + \BSConstant^2,
  \end{equation}
  and
  \begin{equation}
    \label{nonlinear:eq:new-extension:decay-control}
    \sup_{\tStar>0}e^{\BSDDecay\tStar}\norm*{\mathfrak{h}_{\TStar}}_{\HkWithT{\BSLowLvl}(\Sigma_{\tStar})}
    \lesssim \varepsilon_0 + \BSConstant^2.  
  \end{equation}
\end{lemma}

\begin{proof}
  The main idea will be to construct some $\tilde{z}$ supported on
  $[\TStar, \TStar+1]$ which depends nonlinearly on
  $\mathfrak{h}_{\TStar}$ in order to enforce exponential decay of
  $\mathfrak{h}_{\TStar}$ after time $\TStar$. We will then use the
  bootstrap assumptions to prove the desired control.

  We start with the construction of $\tilde{z}$.
  Let $\mathfrak{h}$ be the solution to
  \begin{equation*}
    \begin{split}
      \LinEinsteinS_{\chi_{\TStar-1} g_{\TStar} + (1-\chi_{\TStar-1})g_{b_{\TStar}}}\mathfrak{h}
      &= \chi_{\TStar-1}\mathcal{Q}_{g_{\TStar}}(\mathfrak{h}, \p\mathfrak{h})\\
      \evalAt*{(\mathfrak{h}(\tStar, \cdot), \p_{\tStar}\mathfrak{h}(\tStar, \cdot))}_{\tStar = 0} &= (h_0, h_1).
    \end{split}
  \end{equation*}
  We define $\mathfrak{N}(\mathfrak{h}, \p \mathfrak{h},
  \p\p \mathfrak{h})$ such that 
  \begin{equation}
    \label{nonlinear:eq:new-extension:semi-to-quasi}
    \mathfrak{N}(\mathfrak{h}, \p \mathfrak{h}, \p\p \mathfrak{h})
    = \LinEinsteinS_{\chi_{\TStar-1} g_{\TStar} + (1-\chi_{\TStar-1})g_{b_{\TStar}}} \mathfrak{h}
    - \LinEinstein_{g_{b_{\TStar}}}\mathfrak{h}
    - \chi_{\TStar-1}\mathcal{Q}_{g_{\TStar}}(\mathfrak{h}, \p \mathfrak{h}).
  \end{equation}
  Observe that by construction, 
  \begin{equation}
    \label{nonlinear:eq:new-extension:LinEinstein-diff-is-quasi}
    \norm*{\LinEinsteinS_{\chi_{\TStar-1} g_{\TStar} + (1-\chi_{\TStar-1})g_{b_{\TStar}}} \mathfrak{h}
    - \LinEinstein_{g_{b_{\TStar}}}\mathfrak{h}}_{\HkWithT{3}(\Sigma_{\tStar})}
    \lesssim
    \norm*{h_{\TStar}}_{\HkWithT{3}(\Sigma_{\tStar})}\norm*{\mathfrak{h}}_{\HkWithT{5}(\Sigma_{\tStar})}
    + \norm*{h_{\TStar}}_{\HkWithT{5}(\Sigma_{\tStar})}\norm*{\mathfrak{h}_{\TStar}}_{\HkWithT{3}(\Sigma_{\tStar})}
    .
  \end{equation}    
  Since $\mathfrak{h} = h_{\TStar}$ up until time $\TStar-1$, we
  observe that $\mathfrak{h}_{\TStar}-h_{\TStar}$ solves the system
  \begin{equation*}
    \begin{split}
      \LinEinstein_{g_{b_{\TStar}}}\left(\mathfrak{h}-h_{\TStar}\right)
      &= \mathfrak{N}(\mathfrak{h}, \p \mathfrak{h}, \p\p \mathfrak{h})
        -\Nonlinearity_{g_{b_{\TStar}}}^{\TStar}(h_{\TStar}, \p h_{\TStar}, \p\p h_{\TStar}), \\
      \evalAt*{(\mathfrak{h}_{\TStar}(\tStar, \cdot), \p_{\tStar}\mathfrak{h}_{\TStar}(\tStar, \cdot))}_{\tStar = \TStar-1} &= (0, 0).
    \end{split}    
  \end{equation*}  
  Now, let us consider the finite-dimensional family of
  compactly-supported\footnote{In particular, compactly supported in
    the $\tStar$ direction.} functions
  \begin{equation*}
    \mathcal{Z} :=
    \curlyBrace*{\LinEinstein_{g_{b_{\TStar}}}(\chi_{\TStar}(\tStar) \phi): \phi \in \QNMk{k}(\LinEinstein_{g_b}, \UpperHalfSpace)}.
  \end{equation*}
  The map in \eqref{nonlinear:eq:lambda-map:z-cal-map} is bijective just by
  dimension counting. All non-decaying asymptotic behavior can be
  eliminated by adding some forcing term in $\ZCal$ to the right hand
  side, while at the same time, $\ZCal$ is at most as large as the
  space of non-decaying $\LSolHk{k}$-quasinormal mode solutions. 
  Then from Lemma \ref{nonlinear:coro:lambda-map}, we know that for\footnote{The
    $H^3$ regularity required has to do with the threshold regularity
    level required in the $\LSolHk{k}$-quasinormal spectrum in order
    to pick up exponential decay.} $k\ge 3$ there exists a continuous
  linear map
  \begin{gather*}
    \lambda_{\ZCal}: D^{k, \SpectralGap, 1}(\StaticRegionWithExtension)\to \ZCal,\\
    \lambda_{\ZCal}\left(\mathfrak{N}(\mathfrak{h}, \p \mathfrak{h}, \p\p \mathfrak{h}) - \Nonlinearity(h_{\TStar}, \p h_{\TStar}, \p\p h_{\TStar}), 0, 0 \right) \mapsto \tilde{z}
  \end{gather*}
  such that $\mathfrak{h}_{\TStar}$ solving
  \eqref{nonlinear:eq:new-extension:cauchy-problem} is exponentially decaying.
  
  Moreover, since $\ZCal$ is a finite-dimensional family of compactly
  supported functions, we have that for $4 \le k\le 6$,
  \begin{equation}
    \label{nonlinear:eq:new-extension:z-bound:preliminary}
    \norm*{e^{-\BSEGrowth\tStar}\tilde{z}}_{H^{k}(\StaticRegionWithExtension)}
    \lesssim \norm*{\mathfrak{N}(\mathfrak{h}, \p\mathfrak{h}, \p\p \mathfrak{h})}_{H^{3, \SpectralGap}([\TStar-1, +\infty)\times \Sigma)}
    +  \norm*{\Nonlinearity_{g_{b_{\TStar}}}^{\TStar}(h_{\TStar}, \p h_{\TStar}, \p\p h_{\TStar})}_{H^{3, \SpectralGap}([\TStar-1, +\infty)\times \Sigma)}.    
  \end{equation}
  We already have from Section \ref{nonlinear:sec:bs-decay} that
  \begin{equation*}
    \norm*{\Nonlinearity_{g_{b_{\TStar}}}^{\TStar}(h_{\TStar}, \p h_{\TStar}, \p\p h_{\TStar})}_{H^{3, \SpectralGap}(\StaticRegionWithExtension)}\lesssim \varepsilon_0 + \BSConstant^2. 
  \end{equation*}
  To bound the other term,
  we use the relationship in
  \eqref{nonlinear:eq:new-extension:LinEinstein-diff-is-quasi}, to write that
  \begin{align*}
    &\int_{\TStar-1}^{\TStar}e^{\BSDDecay\tStar}\norm*{
      \mathfrak{N}(\mathfrak{h}, \p \mathfrak{h}, \p\p\mathfrak{h})
    }_{\HkWithT{3}(\Sigma_{\tStar})}\,d\tStar\\
    \lesssim{}& \int_{\TStar-1}^{\TStar} e^{\BSDDecay\tStar}
                \left(\norm*{h_{\TStar}}_{\HkWithT{3}(\Sigma_{\tStar})}\norm*{\mathfrak{h}}_{\HkWithT{5}(\Sigma_{\tStar})}
    + \norm*{h_{\TStar}}_{\HkWithT{5}(\Sigma_{\tStar})}\norm*{\mathfrak{h}}_{\HkWithT{3}(\Sigma_{\tStar})}\right)\,d\tStar.
  \end{align*}
  Now using the mean value theorem, there exists some $t_{avg}\in
  [\TStar-2,\TStar-1]$ such
  that
  \begin{equation*}
    \norm*{\mathfrak{h}}_{\HkWithT{k}(\Sigma_{t_{avg}})} = \int_{\TStar-2}^{\TStar-1}\norm*{\mathfrak{h}}_{\HkWithT{k}(\Sigma_{\tStar})}\,d\tStar. 
  \end{equation*}
  Recall from local existence that
  \begin{equation}
    \label{nonlinear:eq:new-extension:local-existence}
    \sup_{\tStar\in [\TStar-1, \TStar]}e^{-\BSEGrowth\tStar}\norm*{\mathfrak{h}}_{\HkWithT{k}(\Sigma_{\tStar})} \lesssim e^{-\BSEGrowth t_{avg}}\norm*{\mathfrak{h}}_{\HkWithT{k}(\Sigma_{t_{avg}})}
    \lesssim \norm*{e^{-\BSEGrowth\tStar}\mathfrak{h}}_{H^k(\StaticRegionWithExtension)}.
  \end{equation}
  Using the interpolation argument in
  \eqref{nonlinear:eq:BA-D:interpolation-conclusion}, we have that
  \begin{equation*}
    \int_{\TStar-1}^{\TStar+1}\norm*{\mathfrak{h}}_{\HkWithT{5}(\Sigma_{\tStar})}\,d\tStar
    \lesssim \sup_{\tStar\in [\TStar-1,\TStar+1]}e^{\BSDDecay\tStar}\norm*{\mathfrak{h}}_{\HkWithT{3}(\Sigma_{\tStar})}\norm*{e^{-\BSEGrowth\tStar}\mathfrak{h}}_{H^6([\TStar-1, \TStar+1)\times \Sigma)}.
  \end{equation*}
  Then the fact that
  $h_{\TStar}(t_{avg}, \cdot) = \mathfrak{h}(t_{avg},
  \cdot)$, and the bootstrap assumptions for $h_{\TStar}$, we have
  that
  \begin{equation}
    \label{nonlinear:eq:new-extension:T-1-to-T:decay:nonlinear-control}
    \abs*{\int_{\TStar-1}^{\TStar}e^{\BSDDecay\tStar}\norm*{
        \mathfrak{N}(\mathfrak{h}, \p \mathfrak{h}, \p\p\mathfrak{h})
      }_{\HkWithT{3}(\Sigma_{\tStar})}\,d\tStar}
    \lesssim \varepsilon_0 + \BSConstant^2. 
  \end{equation}
  Plugging this back into
  \eqref{nonlinear:eq:new-extension:z-bound:preliminary}, we have that,
  \begin{equation}
    \label{nonlinear:eq:new-extension:z-bound:in-proof}
    \norm*{e^{-\BSEGrowth\tStar}\tilde{z}}_{H^6(\StaticRegionWithExtension)} \lesssim \varepsilon_0 + \BSConstant^2,
  \end{equation}
  as desired.

  We now prove \eqref{nonlinear:eq:new-extension:decay-control}. To this end, we
  rewrite \eqref{nonlinear:eq:new-extension:cauchy-problem} using
  \eqref{nonlinear:eq:new-extension:semi-to-quasi} as
  \begin{equation*}
    \begin{split}
      \LinEinstein_{g_{b_{\TStar}}} \mathfrak{h}_{\TStar} &= \mathfrak{N} (\mathfrak{h}_{\TStar}, \p \mathfrak{h}_{\TStar}, \p\p \mathfrak{h}_{\TStar}) + \tilde{z},\\
      \evalAt*{(\mathfrak{h}_{\TStar}(\tStar, \cdot), \p_{\tStar}\mathfrak{h}_{\TStar}(\tStar, \cdot))}_{\tStar = 0} &= (h_0, h_1),
    \end{split}
  \end{equation*}
  where $\mathfrak{N}$ vanishes after time $\TStar$.
  Then, we can use \eqref{nonlinear:eq:linear-decay-statement} on
  $\mathfrak{h}_{\TStar}$ to deduce that
  \begin{align*}
    &\sup_{\tStar>0} e^{\BSDDecay\tStar}\norm*{\mathfrak{h}_{\TStar}}_{\HkWithT{3}(\Sigma_{\tStar})}\\
    \lesssim{}& \int_{\Real^+} e^{\BSDDecay\tStar}
                \norm*{\mathfrak{N}\left(\mathfrak{h}_{\TStar}, \p \mathfrak{h}_{\TStar}, \p\p \mathfrak{h}_{\TStar}\right)}_{\HkWithT{3}(\Sigma_{\tStar})}
                \,d\tStar
                + \int_{\Real^+} e^{\BSDDecay\tStar}
                \norm*{\tilde{z}}_{\HkWithT{3}(\Sigma_{\tStar})}
                \,d\tStar
                + \varepsilon_0
                .
  \end{align*}
  Until time $\tStar = \TStar-1$,
  \begin{equation*}
    \mathfrak{N}(\mathfrak{h}_{\TStar}, \p \mathfrak{h}_{\TStar}, \p\p\mathfrak{h}_{\TStar})
    = \Nonlinearity_{g_{b_{\TStar}}}(h, \p h, \p\p h), \qquad
    h_{\TStar} = \mathfrak{h}_{\TStar},
  \end{equation*}
  so from Proposition \ref{nonlinear:prop:BA-D:close}, we have that
  \begin{align*}
    \int_{\Real^+} e^{\BSDDecay\tStar}
    \norm*{\mathfrak{N}\left(\mathfrak{h}_{\TStar}, \p \mathfrak{h}_{\TStar}, \p\p \mathfrak{h}_{\TStar}\right)}_{\HkWithT{3}(\Sigma_{\tStar})}
    ={}& \int_{0}^{\TStar-1}e^{\BSDDecay\tStar}\norm*{
      \Nonlinearity_{g_{b_{\TStar}}}^{\TStar}(h_{\TStar}, \p h_{\TStar}, \p\p h_{\TStar})
    }_{\HkWithT{3}(\Sigma_{\tStar})}\,d\tStar\\
    &+ \int_{\TStar-1}^{\TStar}e^{\BSDDecay\tStar}\norm*{
      \mathfrak{N}(\mathfrak{h}_{\TStar}, \p \mathfrak{h}_{\TStar}, \p\p\mathfrak{h}_{\TStar})
    }_{\HkWithT{3}(\Sigma_{\tStar})}\,d\tStar.
  \end{align*}
  The first term we already know from Proposition
  \ref{nonlinear:prop:BA-D:close} is controlled by
  $\varepsilon_0+\BSConstant^2$.
  To control the second term, we use
  \eqref{nonlinear:eq:new-extension:T-1-to-T:decay:nonlinear-control}, observing
  that $\mathfrak{h}_{\TStar} = \mathfrak{h}$ for $\tStar\le \TStar$. 
  
  We now move onto proving the control in
  \eqref{nonlinear:eq:new-extension:main-control}, we apply Proposition
  \ref{nonlinear:prop:BA-E:lossless-exp-grow-ILED} commuted equation for
  $\RedShiftK^\alpha \mathfrak{h}_{\TStar}$ with
  $|\alpha|=k, k\le \BSTopLvl-1$ in \eqref{nonlinear:eq:BA-E:commuted-eqn} to
  $\mathfrak{h}_{\TStar}$ using
  $\DomainOfIntegration = \StaticRegionWithExtension$. Since we
  constructed $\tilde{z}$ such that $\mathfrak{h}_{\TStar}$ is
  exponentially decaying, this yields
  \begin{align*}
    \norm*{e^{-\BSEGrowth\tStar}\RedShiftK^\alpha\mathfrak{h}_{\TStar}}_{H^1(\StaticRegionWithExtension)}
    \lesssim{}& \varepsilon_0
                + \int_{0}^{\TStar}e^{-\BSEGrowth\tStar}\norm*{\RedShiftK^\alpha\mathcal{Q}_{g_{\TStar}}(\mathfrak{h}_{\TStar}, \p\mathfrak{h}_{\TStar})}_{L^2(\Sigma_{\tStar})}\,d\tStar\\
              & + \norm*{e^{-\BSEGrowth\tStar}\tilde{z}}_{H^{\BSTopLvl}(\StaticRegionWithExtension)}
                + \norm*{e^{-\BSEGrowth\tStar}\tilde{f}}_{L^2(\StaticRegionWithExtension)}
                + \norm*{e^{-\BSEGrowth\tStar}\RedShiftK^\alpha\mathfrak{h}_{\TStar}}_{L^2(\StaticRegionWithExtension)}. 
  \end{align*}
  We consider each of the terms on the right-hand side in turn.

  Recall from the definition of $\mathcal{Q}_{g_{\TStar}}$ in
  \eqref{nonlinear:eq:EVE:quasilinear-system:energy} that
  $\mathcal{Q}_{g_{\TStar}}(\mathfrak{h}_{\TStar} , \p
  \mathfrak{h}_{\TStar} )$ is at least quadratic. Thus, there exist
  nonlinear functions $q(\cdot, \cdot)$ and $\tilde{q}(\cdot, \cdot)$
  such that
  \begin{equation*}
    \mathcal{Q}_{g_{\TStar}}(\mathfrak{h}_{\TStar} , \p \mathfrak{h}_{\TStar} ) = q(\mathfrak{h}_{\TStar} , \p \mathfrak{h}_{\TStar} ) + \tilde{q}(\mathfrak{h}_{\TStar} , \p \mathfrak{h}_{\TStar} ),
  \end{equation*}
  where $q(\cdot, \cdot)$ is quadratic in its arguments, and
  $\tilde{q}(\cdot, \cdot)$ is at least cubic. We first address the
  quadratic terms as they are the main difficulty. Recalling that
  $H^k(\Sigma_{\tStar})$ is a calculus for $k > \frac{3}{2}$, we have
  that
  \begin{equation*}
    \norm*{q(\mathfrak{h}_{\TStar} , \p \mathfrak{h}_{\TStar} )}_{\HkWithT{k}(\Sigma_{\tStar})}
    \lesssim \norm{\mathfrak{h}_{\TStar} }_{\HkWithT{\BSTopLvl}(\Sigma_{\tStar})}\norm{\mathfrak{h}_{\TStar} }_{\HkWithT{3}(\Sigma_{\tStar})},
  \end{equation*}
  where the appearance of $\HkWithT{3}(\Sigma_{\tStar})$ comes from
  the Sobolev inequalities.

  Recalling the bootstrap assumptions \eqref{nonlinear:BA-E} and
  \eqref{nonlinear:BA-D}, and the fact that $h_{\TStar} = \mathfrak{h}_{\TStar}$
  up until time $\TStar-1$, we then have that
  \begin{equation*}
    \int_0^{\TStar-1}e^{-\BSEGrowth\tStar}\norm*{q(\mathfrak{h}_{\TStar} , \p \mathfrak{h}_{\TStar})}_{\HkWithT{k}(\Sigma_{\tStar})}\,d\tStar
    \lesssim \BSConstant^2.
  \end{equation*}
  On the other hand, we can control
  \begin{align}
    &\int_{\TStar-1}^{\TStar}e^{-\BSEGrowth\tStar}\norm*{q(\mathfrak{h}_{\TStar} , \p \mathfrak{h}_{\TStar})}_{\HkWithT{k}(\Sigma_{\tStar})}\,d\tStar  \notag  \\
    \lesssim{}& \sup_{\tStar\in [\TStar-1, \TStar]}e^{\BSDDecay\tStar}\norm*{\mathfrak{h}_{\TStar}}_{\HkWithT{3}(\Sigma_{\tStar})}
                \int_{\TStar-1}^{\TStar}e^{-(\BSDDecay-\BSEGrowth)\tStar}e^{-\BSEGrowth\tStar}\norm*{\mathfrak{h}_{\TStar}}_{\HkWithT{k}(\Sigma_{\tStar})}\,d\tStar\notag \\
    \lesssim{}& \BSConstant \norm*{e^{-\BSEGrowth\tStar}\mathfrak{h}_{\TStar}}_{H^k(\DomainOfIntegration)}.
                \label{nonlinear:eq:new-extension:q-control:aux}
  \end{align}
  At this point, we again use the mean value theorem and the local
  existence theory to bound the remaining term by an integral of
  $h_{\TStar}$.  Plugging \eqref{nonlinear:eq:new-extension:local-existence}
  into \eqref{nonlinear:eq:new-extension:q-control:aux} yields that
  \begin{equation*}
    \int_{\TStar-1}^{\TStar}e^{-\BSEGrowth\tStar}\norm*{q(\mathfrak{h}_{\TStar} , \p \mathfrak{h}_{\TStar})}_{\HkWithT{k}(\Sigma_{\tStar})}\,d\tStar\lesssim \BSConstant^2. 
  \end{equation*}

  The higher-order nonlinear terms in
  $\tilde{q}(h ,\p h )$ can be handled similarly
  using the calculus inequality and the bootstrap assumptions
  \eqref{nonlinear:BA-E} and \eqref{nonlinear:BA-D} to
  show that
  \begin{equation*}
    \int_0^{\TStar}e^{-\BSEGrowth\tStar}\norm{\tilde{q}(\mathfrak{h}_{\TStar} , \p \mathfrak{h}_{\TStar} )}_{\HkWithT{k}(\Sigma_{\tStar})}\,d\tStar
    \lesssim \BSConstant^2,\qquad k\le \BSTopLvl-1.
  \end{equation*}

  Again using the the bootstrap assumptions \eqref{nonlinear:BA-E} and
  \eqref{nonlinear:BA-D}, the fact that $h_{\TStar} = \mathfrak{h}_{\TStar}$
  until time $\TStar-1$, and \eqref{nonlinear:eq:new-extension:local-existence},
  we also have from \eqref{nonlinear:eq:BA-E:tilde-f-control} that
  \begin{equation*}
    \norm*{e^{-\BSEGrowth\tStar}\tilde{f}}_{L^2(\StaticRegionWithExtension)}
    \lesssim \BSConstant^2. 
  \end{equation*}
  Finally, to handle the lower-order control, we observe that we can
  argue iteratively from $k=4$, since we have already proven the
  stronger lower-order control in
  \eqref{nonlinear:eq:new-extension:decay-control}.
\end{proof}

We now move onto the proof of the main proposition. 
\begin{proof}[Proof of Proposition \ref{nonlinear:prop:BA-E:close}]
  Recall from \eqref{nonlinear:eq:h:def} that we can decompose
  $h_{\TStar} = \chi_{\TStar}(\tStar)(h-\widetilde{h}_{\TStar}) +
  \widetilde{h}_{\TStar}$ where $h$ solves the full nonlinear Einstein
  vacuum equations in harmonic gauge, and $\widetilde{h}_{\TStar}$
  solves the Cauchy problem in \eqref{nonlinear:eq:tilde-h:Cauchy-problem}.

  To improve \eqref{nonlinear:BA-E}, we observe that
  \begin{equation*}
    \norm*{h_{\TStar}}_{H^k(\StaticRegionWithExtension)}
    \lesssim \norm*{\mathfrak{h}_{\TStar}}_{H^k(\StaticRegionWithExtension)}
    + \norm*{h_{\TStar}}_{H^k(\DomainOfIntegration_{\tStar\ge \TStar-1})}, 
  \end{equation*}
  where
  \begin{equation*}
    \DomainOfIntegration_{\tStar\ge \TStar-1}:=[\TStar-1, +\infty)\times\Sigma,
  \end{equation*}
  and $\mathfrak{h}_{\TStar}$ is as constructed in Lemma
  \ref{nonlinear:lemma:new-extension}. But from Lemma \ref{nonlinear:lemma:new-extension}
  we already have that
  \begin{equation}
    \label{nonlinear:eq:Energy:full-nonlinear}
    \norm*{\mathfrak{h}_{\TStar}}_{H^k(\StaticRegionWithExtension)} \lesssim \varepsilon_0 + \BSConstant^2. 
  \end{equation}
  Thus, it suffices to control
  $\norm*{h_{\TStar}}_{H^k(\DomainOfIntegration_{\tStar\ge
      \TStar-1})}$.  To this end, we improve \eqref{nonlinear:BA-E} for
  $\tStar>\TStar-1$ by controlling $h$ on $[\TStar-1, \TStar+1]$, and
  controlling $\widetilde{h}_{\TStar}$ on $[\TStar, +\infty)$. We
  begin with estimating $h $ on $[\TStar-1, \TStar+1]$. Using the mean
  value theorem, we define $t_{avg}'$ such that
  \begin{equation*}
    \norm*{h_{\TStar}}_{\HkWithT{k}(\Sigma_{t_{avg}'})} = \int_{\TStar-2}^{\TStar-1}\norm*{h_{\TStar}}_{\HkWithT{k}(\Sigma_{\tStar})}\,d\tStar. 
  \end{equation*}
  Then we have from local existence theory that for
  $3<k\le \BSTopLvl$,
  \begin{equation}
    \label{nonlinear:eq:Energy:local-existence-estimate}
    \sup_{\tStar\in [\TStar-1, \TStar+1]} e^{-\BSEGrowth\tStar}\norm*{h}_{\HkWithT{k}(\Sigma_{\tStar})}
    \lesssim e^{-\BSEGrowth t_{avg}'}\norm*{h_{\TStar}}_{\HkWithT{k}(\Sigma_{t_{avg}'})}
    \lesssim \varepsilon_0 + \BSConstant^2,
  \end{equation}
  where the last inequality follows from
  \eqref{nonlinear:eq:new-extension:main-control} using the fact that on
  $\tStar\le \TStar-1$, $\mathfrak{h}_{\TStar} = h_{\TStar}$.

  Furthermore, using Proposition
  \ref{nonlinear:prop:BA-E:lossless-exp-grow-ILED}, we have that
  \begin{equation*}
    \sup_{\tStar > \TStar}e^{-\BSEGrowth (\tStar-\TStar)}\norm*{\widetilde{h}_{\TStar}}_{\HkWithT{k}(\Sigma_{\tStar})}
    \lesssim 
    \norm*{h_{\TStar} }_{\HkWithT{k}(\Sigma_{\TStar})}
    + \norm*{\widetilde{h}_{\TStar} - h_{\TStar} }_{\HkWithT{k}(\Sigma_{\TStar})}.
  \end{equation*}
  By the construction of $\iota_{b,\tStar}$ in Proposition
  \ref{nonlinear:prop:Bootstrap:iota-construction:init-data}, we also have
  that
  $\left.\widetilde{h}_{\TStar} - h \right\vert_{\Sigma_{\TStar}}$ vanishes
  quadratically at $h =0$. Thus, following similar reasoning
  as above and using the bootstrap assumptions \eqref{nonlinear:BA-E} and
  \eqref{nonlinear:BA-D} we have that for $k\le \BSTopLvl$,
  $\tStar \ge \TStar$,
  \begin{align}
    \sup_{\tStar > \TStar}e^{-\BSEGrowth \tStar}\norm*{\widetilde{h}_{\TStar}}_{\HkWithT{k}(\Sigma_{\tStar})}
    &\lesssim e^{-\BSEGrowth\TStar}\left(\norm*{h_{\TStar} }_{\HkWithT{k}(\Sigma_{\TStar})} + \norm*{\widetilde{h}_{\TStar} - h_{\TStar} }_{\HkWithT{k}(\Sigma_{\TStar})}\right)\notag \\
    &\lesssim e^{-\BSEGrowth\TStar}\norm*{h_{\TStar} }_{\HkWithT{k}(\Sigma_{\TStar})} + \BSConstant^2e^{-\BSDDecay\TStar}\notag \\
    &\lesssim \varepsilon_0 + \BSConstant^2, \label{nonlinear:eq:Energy:linear-estimate}
  \end{align}
  where the last inequality follows from
  \eqref{nonlinear:eq:Energy:local-existence-estimate}. 

  Combining \eqref{nonlinear:eq:Energy:full-nonlinear},
  \eqref{nonlinear:eq:Energy:local-existence-estimate}, and
  \eqref{nonlinear:eq:Energy:linear-estimate} concludes the proof of Proposition
  \ref{nonlinear:prop:BA-E:close}.
\end{proof}

\subsection{Improving bootstrap assumption on gauge}
\label{nonlinear:sec:bs-gauge}

In this section, we improve the bootstrap assumption on the gauge,
\eqref{nonlinear:BA-G}. The main proposition is as follows.
\begin{prop}
  \label{nonlinear:prop:BA-gauge-improve}
  Let $\TStar>0$ be some bootstrap time for which there exist
  $b_{\TStar}, \vartheta_{\TStar}, h_{\TStar}, \TStar$ such that
  the bootstrap assumptions \eqref{nonlinear:BA-G}, \eqref{nonlinear:ORT-T}, \eqref{nonlinear:BA-E},
  \eqref{nonlinear:BA-D} are satisfied. Then in fact 
  $b_{\TStar},\vartheta_{\TStar}$ satisfy the improved estimate
  \begin{equation}
    \label{nonlinear:eq:BA-G:improved}
    \abs*{\vartheta_{\TStar}} + \abs*{b_{\TStar}-b^0} \lesssim \varepsilon_0 + \BSConstant^2.
  \end{equation}
\end{prop}

To prove Proposition \ref{nonlinear:prop:BA-gauge-improve}, we use the following
application of the implicit function theorem. 
\begin{lemma}
  \label{nonlinear:lemma:IFT-app:fixed-T}
  Define
  \begin{gather}
    \label{nonlinear:eq:IFT-app:fixed-T:F-def}    
    \IFTNonlinearQuantity: \Real^4\times \Real^{N_{\Theta}} \times
    H^{\BSTopLvl}(\Sigma_0)\times H^{\BSTopLvl-1}(\Sigma_0)    
    \times H^{\BSLowLvl,\BSDDecay}(\StaticRegionWithExtension)
    \to \Real^{4+N_{\Theta}}\\
    \IFTNonlinearQuantity(b, \vartheta, \InducedMetric, k, f)= \lambda_{\Upsilon}[g_b](f, i_{b, \phi}(\InducedMetric, k)),   
  \end{gather}
  where $\phi=e^{i_{\Theta}\vartheta}$, and $h_{\TStar}$ is the solution to the
  Cauchy problem in \eqref{nonlinear:eq:EVE:bootstrap-system}. Then,
  if $(b_\star, \vartheta_\star, \InducedMetric_\star, k_\star)\in \Real^4\times \Real^{N_{\Theta}} \times
  H^{\BSTopLvl}(\Sigma_0)\times H^{\BSTopLvl-1}(\Sigma_0) $ is
  chosen such that
  \begin{equation*}
    \IFTNonlinearQuantity(b_\star, \vartheta_\star,\InducedMetric_\star, k_\star, 0)=0
  \end{equation*}
  and moreover,
  \begin{equation}
    \label{nonlinear:eq:IFT-app:fixed-T:f-condition}
    \left.D_bf\right|_{\left(b_\star, \vartheta_\star, \InducedMetric_\star, k_\star, 0\right)} = 0,\qquad
    \left.D_\vartheta f\right|_{\left(b_\star, \vartheta_\star, \InducedMetric_\star, k_\star, 0\right)} = 0,
  \end{equation}
  then there exists some $\delta>0$ such that defining
  \begin{equation*}
    \mathcal{X}_\delta \subset H^{\BSTopLvl}(\Sigma_0)\times H^{\BSTopLvl-1}(\Sigma_0)
    \times  H^{\BSLowLvl,\BSDDecay}(\StaticRegionWithExtension)    
  \end{equation*}
  such that $(\InducedMetric, k,\TStar)\in \mathcal{X}_\delta$ if and
  only if
  \begin{equation*}
    \norm{\InducedMetric - \InducedMetric_{b_\star}}_{H^{\BSTopLvl}(\Sigma_0)}
    + \norm{k-k_{b_\star}}_{H^{\BSTopLvl-1}(\Sigma_0)}
     + \norm{f}_{ H^{\BSLowLvl,\BSDDecay}(\StaticRegionWithExtension)}    
    < \delta,
  \end{equation*}
  there exists a function
  \begin{equation}
    \label{nonlinear:eq:IFT-app:fixed-T:G-def} 
    \IFTInverse(\InducedMetric, k, f): \mathcal{X}_\delta \to  \Real^4\times\Real^{N_{\Theta}},
  \end{equation}
  which is well-defined and $C^1$ in its arguments on $\mathcal{X}_\delta$,
  and moreover, for $(\InducedMetric, k, f)\in \mathcal{X}_\delta$,
  \begin{equation*}
    \IFTNonlinearQuantity(b,\vartheta, \InducedMetric, k, f) = 0,\quad \iff\quad (b, \vartheta) = \IFTInverse(\InducedMetric, k, f).
  \end{equation*}
\end{lemma}
\begin{proof}
  Recalling the definition of $(g_{b}')^\Upsilon(b')$ in
  \eqref{nonlinear:eq:g-linearized-wave-gauge:def}, observe that we can calculate
  \begin{align*}
    D_bi_{b,\phi}(\InducedMetric, k)\vert_{(b_\star, \vartheta_\star, \InducedMetric_{\star}, k_{\star}, 0)}(b', \vartheta')
    &= \gamma_0\left((g_{b_\star}')^\Upsilon(b')\right),\\
    D_{\vartheta}i_{b,\phi}(\InducedMetric, k)\vert_{(b_\star, \vartheta_\star, \InducedMetric_{\star}, k_{\star}, 0)}(b', \vartheta')
    &= \gamma_0\left(\nabla_{g_{b_\star}}\otimes i_{\Theta}\vartheta'\right).
  \end{align*}
  As a result,
  \begin{equation*}
    D_{b,\vartheta}\IFTNonlinearQuantity = \lambda_{\Upsilon}[g_{b_\star}]\left(0, \gamma_0\left((g_{b_\star}')^\Upsilon(b') + \nabla_{g_{b_\star}}\otimes i_{\Theta}\vartheta' \right)\right), 
  \end{equation*}
  is an isomorphism by Proposition \ref{nonlinear:prop:lambda-isomorphism}. The
  conclusion then follows from a direct application of the implicit
  function theorem in Theorem \ref{nonlinear:thm:IFT}.
\end{proof}

We are now ready to move onto the proof of Proposition
\ref{nonlinear:prop:BA-gauge-improve}. 
\begin{proof}[Proof of Proposition \ref{nonlinear:prop:BA-gauge-improve}]

  We will apply Lemma \ref{nonlinear:lemma:IFT-app:fixed-T} with the map
  \begin{gather*}    
    \mathcal{F}_{\TStar}: \Real^4 \times \Real^{N_{\Theta}} \times H^{\BSTopLvl}(\Sigma_0) \times H^{\BSTopLvl-1}(\Sigma_0)\times H^{\BSLowLvl,\BSDDecay}(\StaticRegionWithExtension)\to \Real^{4+N_{\Theta}},\\
    \mathcal{F}_{\TStar}(b, \vartheta, \InducedMetric, k, f)=  \lambda_{\Upsilon}[g_b](f, i_{b, \phi}(\InducedMetric, k)),
  \end{gather*}
  where $\phi=e^{i_{\Theta}\vartheta}$, around the choice
  \begin{equation*}
    (b, \vartheta, \InducedMetric, k, f) = (b^0, 0, \InducedMetric_{b^0}, k_{b^0}, 0).
  \end{equation*}
  Observe that 
  \begin{equation*}
    \left.D_b f\right\vert_{(b^0, 0, \InducedMetric_{{b^0}}, k_{b^0}, 0)} = 0,\qquad
    \left.D_\vartheta f\right\vert_{(b^0, 0, \InducedMetric_{{b^0}}, k_{b^0}, 0)} = 0.
  \end{equation*}
  Then, we can apply Lemma \ref{nonlinear:lemma:IFT-app:fixed-T} to see that there
  exists some $C^1$ function
  \begin{equation*}
    \IFTInverse_{\TStar}: H^{\BSTopLvl}(\Sigma_0) \times H^{\BSTopLvl-1}(\Sigma_0)\times H^{\BSLowLvl,\BSDDecay}(\StaticRegionWithExtension) \to \Real^4 \times \Real^{N_{\Theta}}, 
  \end{equation*} and some $\delta_{\TStar}>0$ such that for
  \begin{equation}
    \label{nonlinear:eq:BA-G:IFT:smallness-condition}
    \norm{\InducedMetric - \InducedMetric_{b^0}}_{H^{\BSTopLvl}(\Sigma_0)}
    + \norm{k - k_{b^0}}_{H^{\BSTopLvl-1}(\Sigma_0)}
    + \norm{f}_{H^{\BSLowLvl,\BSDDecay}(\StaticRegionWithExtension)}
    < \delta_{\TStar},
  \end{equation}
  we have that
  \begin{equation*}
    \IFTNonlinearQuantity_{\TStar}(b, \vartheta, \InducedMetric, k, f) = 0 \quad \iff \quad
    (b, \vartheta) = \IFTInverse_{\TStar}(\InducedMetric, k, f).
  \end{equation*}
  Then, we can pick $(\InducedMetric, k, f) = \left(\InducedMetric_0,
    k_0, \NCal_{g_{b_{\TStar}}}^{\TStar}(h_{\TStar},
  \p h_{\TStar}, \p\p h_{\TStar})\right)$. Recall that we have already shown
  in \eqref{nonlinear:eq:Decay:NL-BS} that
  \begin{equation*}
    \norm*{\NCal_{g_{b_{\TStar}}}^{\TStar}(h_{\TStar},
      \p h_{\TStar}, \p\p h_{\TStar})}_{H^{\BSLowLvl,\BSDDecay}(\StaticRegionWithExtension)}\lesssim
    \BSConstant^2.
  \end{equation*}
  Thus, taking $\varepsilon_0$, $\BSConstant$ sufficiently small, we
  can ensure that \eqref{nonlinear:eq:BA-G:IFT:smallness-condition} is satisfied
  for
  $(\InducedMetric, k, f) = \left(\InducedMetric_0, k_0,
    \NCal_{g_{b_{\TStar}}}^{\TStar}(h_{\TStar}, \p h_{\TStar}, \p\p
    h_{\TStar})\right)$.

  Since we already know that
  \begin{equation*}
    \IFTNonlinearQuantity_{\TStar}\left(b_{\TStar}, \vartheta_{\TStar}, \InducedMetric_0, k_0,\NCal_{g_{b_{\TStar}}}^{\TStar}(h_{\TStar},
      \p h_{\TStar}, \p\p h_{\TStar})\right)=0,
  \end{equation*}
  we have in fact that 
  \begin{equation*}
    (b_{\TStar}, \vartheta_{\TStar}) =
    \IFTInverse_{\TStar}\left(\InducedMetric_0, k_0, \NCal_{g_{b_{\TStar}}}^{\TStar}(h_{\TStar},
      \p h_{\TStar}, \p\p h_{\TStar})\right). 
  \end{equation*}
  We then have the following estimate
  \begin{align*}
    &\abs*{b_{\TStar} - b^0} + \abs*{\vartheta_{\TStar}}\\
    \lesssim{}& \norm*{\InducedMetric_0 - \InducedMetric_{b^0}}_{H^{\BSTopLvl}(\Sigma_0)}
                + \norm*{k_0 - k_{b^0}}_{H^{\BSTopLvl-1}(\Sigma_0)}
                + \norm*{\NCal_{g_{b_{\TStar}}}^{\TStar}(h_{\TStar}, \p h_{\TStar}, \p\p h_{\TStar})}_{H^{\BSLowLvl,\BSDDecay}(\StaticRegionWithExtension)}\\
    \lesssim{}& \varepsilon_0 +\BSConstant^2,
  \end{align*}
  where the first inequality follows from the fact that
  $\IFTInverse_{\TStar}$ is $C^1$ in its arguments and the second
  follows from the smallness of the initial data in \eqref{nonlinear:eq:main:init-data-smallness} and from the
  improvements to the bootstrap assumptions in
  \eqref{nonlinear:eq:BA-D:improved} and \eqref{nonlinear:eq:BA-E:improved}.
\end{proof}

\subsection{Extending the bootstrap time}
\label{nonlinear:sec:bs-ext}

We will also need to extend the bootstrap gauge. We formalize this in
the following proposition.
\begin{prop}
  \label{nonlinear:prop:BA-gauge-extend}
  Assume that the orthogonality condition in \eqref{nonlinear:ORT-T}, bootstrap
  assumptions \eqref{nonlinear:BA-E}, \eqref{nonlinear:BA-D} and \eqref{nonlinear:BA-G} all hold
  with bootstrap time $\TStar >0 $. Then there exists some $\deltaExt >0$
  sufficiently small, slowly rotating \KdS{} black-hole
  parameters $b_{\TStar + \deltaExt}$ and a diffeomorphism
  $\phi_{\TStar+\deltaExt}=e^{i_{\Theta}\vartheta_{\TStar+\deltaExt}}$  
  such that 
  \begin{equation*}
    h_{\TStar+\deltaExt}:=h_{\TStar+\deltaExt}(b_{\TStar+\deltaExt}, \vartheta_{\TStar+\deltaExt}, \InducedMetric_0, k_0)
  \end{equation*}
  satisfies the
  orthogonality condition
  \begin{equation}
    \label{nonlinear:eq:BA-extend:orthogonality-condition}
    \lambda_{\Upsilon}\left[g_{b_{\TStar+\deltaExt}}\right]\left({\NCal}^{\TStar+\deltaExt}_{g_{b_{\TStar + \deltaExt}}}(h_{\TStar+\deltaExt}), i_{b_{\TStar + \deltaExt},\phi_{\TStar + \deltaExt}}(\InducedMetric_0, k_0)\right) = 0.
  \end{equation}
  Moreover, $h_{\TStar + \deltaExt}$ satisfies the bootstrap
  assumptions \eqref{nonlinear:BA-E} and \eqref{nonlinear:BA-D}, and
  $\vartheta_{\TStar+\deltaExt}$ and $b_{\TStar+\deltaExt}$ satisfy
  \eqref{nonlinear:BA-G}.
\end{prop}

We first prove the following auxiliary lemma. 
\begin{lemma}
  \label{nonlinear:lemma:BA-extend:nbhd-construction}
  Under the same assumptions as in Proposition
  \ref{nonlinear:prop:BA-gauge-extend}, there exists $\delta_* >0$ sufficiently
  small, and a neighborhood
  $(b_{\TStar}, \vartheta_{\TStar})\in \mathcal{B}_{ext}\subset
  \Real^4\times \Real^{N_{\Theta}}$ such that for any
  $(b,\vartheta)\in \mathcal{B}_{ext}$, and $\delta<\delta_*$ the
  solution
  \begin{equation*}
    h_{\TStar+\delta}(b, \vartheta, \InducedMetric_0, k_0)
    := \chi_{\TStar+\delta}h(b,\vartheta,\InducedMetric_0, k_0)
    + (1-\chi_{\TStar+\delta})\widetilde{h}_{\TStar+\delta}(b,\vartheta, \InducedMetric_0, k_0)
  \end{equation*}
  satisfies the following estimates
  \begin{equation}
    \label{nonlinear:eq:BA-extend:nbhd-construction:local-estimates}
    \begin{split}
      \sup_{\tStar<\TStar+\delta+1}e^{\BSDDecay\tStar}\norm*{h_{\TStar+\delta}(b, \vartheta, \InducedMetric_0, k_0)}_{\HkWithT{k}(\Sigma_{\tStar})}&\le \BSConstant, \qquad k\le \BSLowLvl, \\
    \sup_{\tStar<\TStar+\delta+1}e^{-\BSEGrowth\tStar}\norm*{h_{\TStar+\delta}(b, \vartheta, \InducedMetric_0, k_0)}_{\HkWithT{k}(\Sigma_{\tStar})}&\le \BSConstant,\qquad k\le \BSTopLvl,\\
    \abs*{b-b^0} + \abs*{\vartheta} &\le \BSConstant.
    \end{split}    
  \end{equation}
  If in addition, for some $(b,\vartheta)\in \mathcal{B}_{ext}$,
  $\delta<\delta_*$, we have that
  \begin{equation}
    \label{nonlinear:eq:BA-extend:nbhd-construction:ort-condition}
    \lambda_{\Upsilon}[g_b]\left(\NCal^{\TStar+\delta}\left(h_{\TStar+\delta}(b, \vartheta, \InducedMetric_0, k_0)\right), i_{b,\vartheta}(\InducedMetric_0, k_0)\right),
  \end{equation}
  then in fact for $\BSConstant$ sufficiently small, we have the
  global estimates
  \begin{equation}
    \label{nonlinear:eq:BA-extend:nbhd-construction:global-estimates}
    \begin{split}
      \sup_{\tStar>0}e^{\BSDDecay\tStar}\norm*{h_{\TStar+\delta}(b, \vartheta, \InducedMetric_0, k_0)}_{\HkWithT{k}(\Sigma_{\tStar})}&\le \BSConstant, \qquad k\le \BSLowLvl, \\
    \sup_{\tStar>0}e^{-\BSEGrowth\tStar}\norm*{h_{\TStar+\delta}(b, \vartheta, \InducedMetric_0, k_0)}_{\HkWithT{k}(\Sigma_{\tStar})}&\le \BSConstant,\qquad k\le \BSTopLvl. 
    \end{split}    
  \end{equation}
\end{lemma}

\begin{proof}
  We consider the neighborhood
  \begin{equation*}
    \mathcal{B}_{ext} := \curlyBrace*{(b,\vartheta):\abs*{b-b_{\TStar}}+\abs*{\vartheta-\vartheta_{\TStar}} <\delta_*},
  \end{equation*}
  and show that for a sufficiently small choice of $\delta_*$,
  $\mathcal{B}_{ext}$ satisfies the conditions in the lemma. 
  
  Observe that on $[0, \TStar]$,
  \begin{equation*}    
    \phi_{\TStar}^*(g_{b_{\TStar}} + h_{\TStar})
    = \phi_{\TStar+\delta_*}^*(g_{b_{\TStar+\delta_*}}+ h_{\TStar+\delta_*}),
  \end{equation*}
  so for some $\GronwallExp>0$ sufficiently large,
  \begin{equation}
    \label{nonlinear:eq:BA-extend:less-than-BSTime}
    \begin{split}
      \sup_{\tStar\le \TStar}e^{\BSDDecay\tStar}\norm*{h_{\TStar+\delta_*}-h_{\TStar}}_{\HkWithT{k}(\Sigma_{\tStar})}
      &\lesssim  e^{\GronwallExp\TStar}\delta_*, \qquad k\le \BSLowLvl,\\
      \sup_{\tStar\le \TStar}e^{-\BSEGrowth\tStar}\norm*{h_{\TStar+\delta_*}-h_{\TStar}}_{\HkWithT{k}(\Sigma_{\tStar})}
      &\lesssim e^{\GronwallExp\TStar}\delta_*, \qquad k\le \BSTopLvl.   
    \end{split}        
  \end{equation}
  On the interval $[\TStar, \TStar+1+\delta_*]$, for $\delta_*$
  sufficiently small, a local existence
  result and \eqref{nonlinear:eq:BA-extend:less-than-BSTime} yield that
  \begin{equation}
    \label{nonlinear:eq:BA-extend:BSTime-to-BSTime-plus-deltaExt}
    \begin{split}
      &\sup_{\tStar\in [\TStar, \TStar+1+\delta_*]}e^{\BSDDecay\tStar}\norm*{h_{\TStar+\delta_*}-h_{\TStar}}_{\HkWithT{k}(\Sigma_{\tStar})}\\
      \lesssim{}&  e^{\BSDDecay\TStar}\norm*{h_{\TStar+\delta_*}-h_{\TStar}}_{\HkWithT{k}(\Sigma_{\TStar})}
      + e^{\BSDDecay\TStar}\norm*{h_{\TStar}}_{\HkWithT{k}(\Sigma_{\TStar})},
      \qquad k\le \BSLowLvl,\\
      &\sup_{\tStar\in [\TStar, \TStar+1+\delta_*]}e^{-\BSEGrowth\tStar}\norm*{h_{\TStar+\delta_*}-h_{\TStar}}_{\HkWithT{k}(\Sigma_{\tStar})}\\
      \lesssim&{} e^{-\BSEGrowth\TStar}\norm*{h_{\TStar+\delta_*}-h_{\TStar}}_{\HkWithT{k}(\Sigma_{\TStar})}
      + e^{-\BSEGrowth\TStar}\norm*{h_{\TStar}}_{\HkWithT{k}(\Sigma_{\TStar})},
      \qquad k\le \BSTopLvl.   
    \end{split}    
  \end{equation}
  Combining
  \eqref{nonlinear:eq:BA-extend:less-than-BSTime} and 
  \eqref{nonlinear:eq:BA-extend:BSTime-to-BSTime-plus-deltaExt}, and the
  improvements to the bootstrap assumptions we have already proven
  in Propositions \ref{nonlinear:prop:BA-D:close}, \ref{nonlinear:prop:BA-E:close}, and
  \ref{nonlinear:prop:BA-gauge-improve} respectively, we have that
  \begin{align*}
    \sup_{\tStar<\TStar+1+\delta_*}e^{\BSDDecay\tStar}\norm*{h_{\TStar+\delta_*}}_{\HkWithT{k}(\Sigma_{\tStar})} &\lesssim \varepsilon_0 + \BSConstant^2 + e^{\GronwallExp \TStar}\delta_* , \qquad k\le \BSLowLvl,\\
    \sup_{\tStar<\TStar+1+\delta_*}e^{-\BSEGrowth\tStar}\norm*{h_{\TStar+\delta_*}}_{\HkWithT{k}(\Sigma_{\tStar})}&\lesssim \varepsilon_0 + \BSConstant^2 + e^{\GronwallExp \TStar}\delta_*, \qquad k\le \BSTopLvl,\\
    \abs*{b-b^0} + \abs*{\vartheta} &\lesssim \varepsilon_0 + \BSConstant^2 + \delta_*.
  \end{align*}
  The estimates in
  \eqref{nonlinear:eq:BA-extend:nbhd-construction:local-estimates} the follow by
  taking $\varepsilon_0$, $\BSConstant$, and $\delta_*$ sufficiently
  small.

  If in addition, the orthogonality condition in
  \eqref{nonlinear:eq:BA-extend:nbhd-construction:ort-condition} is satisfied,
  then the global estimates in
  \eqref{nonlinear:eq:BA-extend:nbhd-construction:global-estimates} follow
  directly from an application of Propositions
  \ref{nonlinear:prop:orthogonality-condition} and
  \ref{nonlinear:prop:BA-E:lossless-exp-grow-ILED}.
\end{proof}

We are now ready to prove Proposition \ref{nonlinear:prop:BA-gauge-extend}. 

\begin{proof}[Proof of Proposition \ref{nonlinear:prop:BA-gauge-extend}]
  The main goal will be to apply Lemma \ref{nonlinear:lemma:IFT-app:fixed-f}
  to bifurcate around the point $(b_{\TStar}, \vartheta_{\TStar},
  \InducedMetric_0, k_0, \TStar)$ in order to find an appropriate
  $(b_{\TStar+\deltaExt}, \vartheta_{\TStar+\deltaExt},
  \InducedMetric_0, k_0, \TStar+\deltaExt)$ satisfying the conditions
  in the proposition.

  First, observe that using the interpolation inequality in
  \eqref{nonlinear:eq:BA-D:interpolation-step-1}, we know that for
  $(b_{\TStar}, \vartheta_{\TStar})\in \mathcal{B}_{ext}\subset
  \Real^4\times\Real^{N_{\Theta}}$ and $\delta<\delta_*$, where
  $\mathcal{B}_{ext}$ and $\delta_*$ are as constructed in Lemma
  \ref{nonlinear:lemma:BA-extend:nbhd-construction}, we have that 
  \begin{equation}
    \label{nonlinear:eq:BA-extend:smallness-at-T}
    \norm*{\gamma_{\TStar}(h_{\TStar+\delta}(b, \vartheta, \InducedMetric_0, k_0))}_{\LSolHk{\BSLowLvl+2}(\Sigma_{\TStar})} \le \frac{\BSConstant}{2}.
  \end{equation}
  Now, to apply Lemma \ref{nonlinear:lemma:IFT-app:fixed-f}, we define
  \begin{gather*}
    \IFTNonlinearQuantity_{ext}: \Real^4\times \Real^{N_{\Theta}}\times H^{\BSLowLvl}(\Sigma_\TStar)\times H^{\BSLowLvl-1}(\Sigma_{\TStar})\times \Real^+\to \Real^4\times\Real^{N_{\Theta}},\\    
    (b, \vartheta, \InducedMetric, k, \delta)\mapsto \lambda_{\Upsilon}[g_{b}]\left(\NCal^{\TStar+\delta}_{g_b} \left(h(b,\vartheta, \InducedMetric, k, \delta)(\tStar+\TStar+\delta)\right), i_{b,\vartheta}(\InducedMetric_{\TStar}, k_{\TStar}) \right).
  \end{gather*}
  Observe from Proposition \ref{nonlinear:prop:lambda-time-translation} that
  \begin{equation*}
    \begin{split}
      &\lambda_{\Upsilon}[g_{b}]\left(\NCal^{\TStar+\delta}_{g_b} \left(h(b,\vartheta, \InducedMetric, k, \delta)(\tStar+\TStar)\right), i_{b,\vartheta}(\InducedMetric_{\TStar}, k_{\TStar}) \right)\\
      ={}& \lambda_{\Upsilon}[g_{b}]\left(\NCal^{\TStar+\delta}_{g_b} \left(h(b,\vartheta, \InducedMetric, k, \delta)(\tStar)\right), i_{b,\vartheta}(\InducedMetric_{0}, k_0) \right).  
    \end{split}    
  \end{equation*}

  The main step will be to show that
  $\evalAt*{D_{b,\vartheta}\IFTNonlinearQuantity_{ext}}_{(b_{\TStar},\vartheta_{\TStar},
    \InducedMetric_{\TStar}, k_{\TStar}, 0)}$ is invertible, where
  $(\InducedMetric_{\TStar}, k_{\TStar})$ denotes the induced metric
  and second fundamental form respectively by
  $g = \phi_{\TStar}^*(g_{b_{\TStar}}+h_{\TStar})$ on
  $\phi(\Sigma_{\TStar})$.

  To this end, observe that
  \begin{align}
    &\evalAt*{D_{b,\vartheta}i_{b,\vartheta}(\InducedMetric, k)}_{\left(b_{\TStar}, \vartheta_{\TStar}, \InducedMetric_{\TStar}, k_{\TStar}\right)}(b',\vartheta') \notag \\
    ={}& \evalAt*{D_{b,\vartheta}\gamma_{\TStar}\left(h_{\TStar}(b,\vartheta,\InducedMetric, k)\right)}_{\left(b_{\TStar}, \vartheta_{\TStar}, \InducedMetric_{\TStar}, k_{\TStar}\right)}(b',\vartheta')\notag \\
    ={}&
         \gamma_{\TStar}\left((g_{b_{\TStar}}')^{\Upsilon}(b') + \nabla_{g_{b_\TStar}}\otimes i_{\Theta}\vartheta'\right)
         +\gamma_{\TStar}\left((h_{{\TStar}}')^{\Upsilon}(b') + \nabla_{h_{\TStar}}\otimes i_{\Theta}\vartheta'\right). \label{nonlinear:eq:BA-extend:D-b-theta-i}
  \end{align}
  From Propositions \ref{nonlinear:prop:lambda-isomorphism} and
  \ref{nonlinear:prop:lambda-time-translation},
  $\lambda_{\Upsilon}[g_{b_{\TStar}}]\left(0,
    \gamma_{\TStar}\left((g_{b_{\TStar}}')^{\Upsilon}(b') +
      \nabla_{g_{b_\TStar}}\otimes i_{\Theta}\vartheta'\right)\right)$ is an
  isomorphism of $\Real^4\times\Real^{N_{\Theta}}$ to
  itself. Furthermore, since $h_{\TStar}$ satisfies the improved
  bootstrap estimates as proven in Propositions \ref{nonlinear:prop:BA-D:close}
  and \ref{nonlinear:prop:BA-E:close},
  \begin{equation}
    \label{nonlinear:eq:BA-extend:D-b-theta-i:err}
    \norm*{\gamma_{\TStar}\left((h_{{\TStar}}')^{\Upsilon}(b') + \nabla_{h_{\TStar}}\otimes i_{\Theta}\vartheta'\right)}_{\LSolHk{\BSLowLvl+1}(\Sigma)} \lesssim \varepsilon_0+\BSConstant^2.
  \end{equation}  
  Likewise, we have using local existence that
  \begin{align}
    &\sup_{\TStar\le \tStar \le \TStar+1}\norm*{\evalAt*{D_{b,\vartheta}\NCal^{\TStar+\deltaExt}_{g_{b}}\left(h_{\TStar}(b,\vartheta,\InducedMetric, k)\right)}_{\left(b_{\TStar}, \vartheta_{\TStar}, \InducedMetric_{\TStar}, k_{\TStar}, 0 \right)}}_{\HkWithT{\BSLowLvl-1}(\Sigma_{\tStar})}\notag \\
    \lesssim{}&  \sup_{\TStar\le \tStar \le \TStar+1}\norm*{h_{\TStar}(b_{\TStar}, \vartheta_{\TStar}, \InducedMetric_\TStar, k_{\TStar}, 0)}_{\HkWithT{\BSLowLvl+1}(\Sigma_{\tStar})}\norm*{\evalAt*{D_{b,\vartheta}h_{\TStar}}_{\left(b_{\TStar}, \vartheta_{\TStar}, \InducedMetric_{\TStar}, k_{\TStar}\right)} }_{\HkWithT{\BSLowLvl+1}(\Sigma_{\TStar})} \notag \\
    \lesssim{}& \BSConstant\norm*{\evalAt*{D_{b,\vartheta}h_{\TStar}}_{\left(b_{\TStar}, \vartheta_{\TStar}, \InducedMetric_{\TStar}, k_{\TStar}\right)} }_{\HkWithT{\BSLowLvl+1}(\Sigma_{\TStar})},  \label{nonlinear:eq:BA-extend:D-b-theta-N}
  \end{align}
  where the last inequality follows from the observation made in
  \eqref{nonlinear:eq:BA-extend:smallness-at-T}. 
  
  Moreover, from Lemma \ref{nonlinear:lemma:lambda-b-derivative}, we have that
  \begin{equation}
     \label{nonlinear:eq:BA-extend:D-b-theta-lambda}
    \abs*{\evalAt*{D_{b,\vartheta}\lambda_{\Upsilon}[g_{b}]}_{b_{\TStar}, \vartheta_{\TStar}}\left(\NCal^{\TStar+\deltaExt}_{g_b}(h_{\TStar}), i_{b_{\TStar},\vartheta_{\TStar}}(\InducedMetric_{\TStar}, k_{\TStar})\right)}\lesssim \BSConstant^2. 
  \end{equation}
  Combining \eqref{nonlinear:eq:BA-extend:D-b-theta-i},
  \eqref{nonlinear:eq:BA-extend:D-b-theta-N}, and
  \eqref{nonlinear:eq:BA-extend:D-b-theta-lambda} and taking $\BSConstant$
  sufficiently small then yields the invertibility of
  $\evalAt*{D_{b,\vartheta}\IFTNonlinearQuantity_{ext}}_{(b_{\TStar},
    \vartheta_{\TStar}, \InducedMetric_{\TStar}, k_{\TStar}, 0)}$,
  as desired.

  We can now apply Lemma \ref{nonlinear:lemma:IFT-app:fixed-f}, bifurcating
  around $(b_{\TStar}, \vartheta_{\TStar}, \InducedMetric_{\TStar},
  k_{\TStar}, 0)$ to see that there exists a neighborhood
  $\mathcal{X}_{ext}\subset H^{\BSLowLvl}(\Sigma_{\TStar}) \times
  H^{\BSLowLvl-1}(\Sigma_{\TStar})\times [0,1]$ on which there exists
  a mapping
  \begin{equation*}
    \IFTInverse_{ext}:\mathcal{X}_{ext}\to \mathcal{B}_{ext} 
  \end{equation*}
  such that
  \begin{equation*}
    \IFTNonlinearQuantity_{ext}(b,\vartheta,\InducedMetric, k, \delta) = 0\quad\iff\quad (b,\vartheta) = \IFTInverse(\InducedMetric, k, \delta). 
  \end{equation*}
  Thus, for $\deltaExt>0$ sufficiently small, it is clear that
  $\left(\InducedMetric_{\TStar}, k_{\TStar}, \deltaExt\right)\in
  \mathcal{X}_{ext}$, and thus, there exists some
  \begin{equation*}
    (b_{\TStar+\deltaExt}, \vartheta_{\TStar+\deltaExt}):=\IFTInverse(\InducedMetric_{\TStar}, k_{\TStar}, \deltaExt)
  \end{equation*}
  such that
  \begin{equation*}
    \IFTNonlinearQuantity_{ext}(b_{\TStar+\deltaExt}, \vartheta_{\TStar+\deltaExt}, \InducedMetric_{\TStar}, k_{\TStar}, \deltaExt) = 0,
  \end{equation*}
  and moreover, $(b_{\TStar+\deltaExt},
  \vartheta_{\TStar+\deltaExt})\in \mathcal{B}_{ext}$. Then, using
  Lemma \ref{nonlinear:lemma:BA-extend:nbhd-construction} concludes the proof of
  Proposition \ref{nonlinear:prop:BA-gauge-extend}.   
\end{proof}

\subsection{Closing the proof of Theorem \ref{nonlinear:thm:main}}
\label{nonlinear:sec:closing-proof}

We are now ready to prove Theorem \ref{nonlinear:thm:main} via a standard
continuity argument. Recall from Proposition \ref{nonlinear:prop:BA-justify}
that there exists some $\TStar>0$ sufficiently small so that
\eqref{nonlinear:BA-G}, \eqref{nonlinear:BA-D}, and \eqref{nonlinear:BA-E} are satisfied. We now let
$\BSTime$ be the supremum of all such $\TStar$, and assume for the
sake of contradiction that $\BSTime<+\infty$. By the continuity of the
flow, there exists $(b_{\BSTime}, \vartheta_{\BSTime})$ such that
\eqref{nonlinear:BA-G}, \eqref{nonlinear:BA-D}, and \eqref{nonlinear:BA-E} hold at
$\TStar=\BSTime$. 

But then, Proposition \ref{nonlinear:prop:BA-gauge-extend} states that there
exist choice of
$(b_{\BSTime+\deltaExt}, \vartheta_{\BSTime+\deltaExt})$ for which in
fact \eqref{nonlinear:BA-G}, \eqref{nonlinear:BA-D}, and \eqref{nonlinear:BA-E} continue to
hold. Thus, we have a contradiction and in fact, $\BSTime=+\infty$. We
thus have a family of black hole parameters and diffeomorphism
parameterizations $(b_{\tStar}, \vartheta_{\tStar}, h_{\tStar})$,
parametrized by $\tStar\to +\infty$ such that
$h_{\tStar} = h_{\tStar}(b_{\tStar},
\vartheta_{\tStar},\InducedMetric_0, k_0)$ solves
\eqref{nonlinear:eq:EVE:bootstrap-system} with
$(b_{\tStar}, \vartheta_{\tStar})$ satisfying \eqref{nonlinear:BA-G},
\eqref{nonlinear:BA-D}, and \eqref{nonlinear:BA-E}. In particular, from \eqref{nonlinear:BA-G},
$(b_{\tStar}, \vartheta_{\tStar})$ is a bounded, finite-dimensional
family. Thus, it must possess a convergent subsequence
\begin{equation*}
  \lim_{k\to +\infty}(b_{t_k}, \vartheta_{t_k}) \to (b_{\infty}, \vartheta_{\infty}).
\end{equation*}
Furthermore, $(b_{\infty}, \vartheta_{\infty})$ is the unique limit since
otherwise, distinct \KdS{} metrics would be diffeomorphic to each
other.

Thus, the solution $g$ to EVE is global and writing
$g = \phi^*_\infty(g_{b_\infty}+h)$, where
$\phi_\infty=e^{i_\Theta\vartheta_\infty}$, we have that
\begin{equation*}
  \sup_{\tStar>0}e^{-\BSDDecay\tStar}\norm{h}_{\HkWithT{3}(\Sigma_{\tStar})} \lesssim \varepsilon_0.
\end{equation*}
This concludes the proof of Theorem \ref{nonlinear:thm:main}.

\section{Energy estimates on perturbations of \KdS}
\label{nonlinear:sec:energy-perturb-kds}

This section contains all the necessary analysis on perturbations of
\KdS{} for the rest of the paper. 

\subsection{Proof of Proposition
  \ref{nonlinear:prop:BA-E:lossless-exp-grow-ILED}}
\label{nonlinear:sec:proof-of-perturbed-ILED}

We prove Proposition \ref{nonlinear:prop:BA-E:lossless-exp-grow-ILED}
by proving the following, stronger perturbation of the high-frequency
Morawetz estimate in Proposition
\ref*{linear:thm:resolvent-estimate:main} of \cite{fang_linear_2021}.
\begin{prop}
  \label{nonlinear:prop:BA-E:main-energy-estimate}
  Let $g = g_b + \tilde{g}$, where $g_b$ is a fixed, slowly-rotating
  \KdS{} metric, and
  \begin{equation}
    \label{nonlinear:eq:BA-E:g-perturb-control}
    \sup_{\tStar>0}e^{\BSDDecay\tStar}\norm{\tilde{g}}_{\HkWithT{3}(\Sigma_{\tStar})} \le \BSConstant.
  \end{equation}
  Moreover, consider some solution $h$ to the Cauchy problem given
  by
  \begin{align*}
    \LinEinsteinS_{g}  h &= f\\
    \gamma_0( h) &= ( h_0,  h_1),
  \end{align*}
  where $\LinEinsteinS_g$ is defined as in
  \eqref{nonlinear:eq:EVE:quasilinear-LinEinstein-def}, and the initial data
  $(h_0, h_1)\in \LSolHk{1}(\Sigma_0)$. 
  Also, let $\DomainOfIntegration = [0, \TStar]\times \Sigma$,
  $\TStar>0$.  Then for any fixed $\delta_0>0$, there exists a
  choice of $\BSConstant$ sufficiently small such that $h$
  satisfies the estimate
  \begin{equation}
    \label{nonlinear:eq:BA-E:main-energy-estimate}
    \norm*{e^{-\delta_0\tStar}h}_{H^1(\DomainOfIntegration)}
    \lesssim \norm{(h_0, h_1)}_{\LSolHk{1}(\Sigma_0)}
    + \norm*{e^{-\delta_0\tStar}h}_{\HkWithT{1}(\Sigma_{\TStar})}
    + \norm{e^{-\delta_0\tStar}f}_{L^2(\DomainOfIntegration)}
    + \norm{e^{-\delta_0\tStar}h}_{L^2(\DomainOfIntegration)}.
  \end{equation}
\end{prop}
The proof of Proposition \ref{nonlinear:prop:BA-E:main-energy-estimate} is
postponed to Section
\ref{nonlinear:sec:proof:prop:BA-E:main-energy-estimate}. 
\begin{remark}
  When compared to the Morawetz estimate in Theorem
  \ref*{linear:thm:resolvent-estimate:main} in \cite{fang_linear_2021}
  in the linear theory, the main estimate
  \eqref{nonlinear:eq:BA-E:main-energy-estimate} is weaker in the sense that it
  is consistent with exponential growth (albeit slow exponential
  growth) instead of exponential decay, but is stronger in the sense
  that it only involves non-degenerate norms, and moreover, does not
  lose any derivatives.
\end{remark}
\begin{remark}
  The appearance of $\HkWithT{3}(\Sigma)$ in
  \eqref{nonlinear:eq:BA-E:g-perturb-control} is chosen so that we have the
  control
  \begin{equation*}
    \sup_{\tStar>0}\left(\norm{\tilde{g}}_{W^{1,\infty}(\Sigma_{\tStar})} + \norm{\KillT\tilde{g}}_{L^\infty(\Sigma_{\tStar})}\right)
    \lesssim \BSConstant
  \end{equation*}
  by Sobolev inequalities. Observe that again, we only need $\HkWithT{s}(\Sigma)$,
  $s>\frac{5}{2}$, but since we choose $\BSLowLvl=3$ anyways, we do
  not make attempts to optimize this further. 
\end{remark}

Given Proposition \ref{nonlinear:prop:BA-E:main-energy-estimate}, Proposition
\ref{nonlinear:prop:BA-E:lossless-exp-grow-ILED} follows directly.
\begin{proof}[Proof of Proposition
  \ref{nonlinear:prop:BA-E:lossless-exp-grow-ILED}.]
  The estimate in \eqref{nonlinear:eq:BA-E:lossless-exp-grow-ILED} follows
  immediately taking $\delta_0 = \BSEGrowth$. 
\end{proof}

\subsection{Basic pseudo-differential theory}

Before we move onto the remainder of the section, we first introduce
the basics of the pseudo-differential analysis we will be using. For
an in-depth reference, we refer the reader to
Chapter 1 of \cite{alinhac_pseudo-differential_2007},  Chapter 18 of
\cite{hormander_analysis_2007}, or Chapters 1-4 of
\cite{taylor_pseudodifferential_1991}.

\begin{definition}
  For $m\in \Real$, we define $\Psi^{m}$ to be the class of order-$m$
  symbols on $\Real^d$, consisting of $C^\infty$ functions
  $a(\STPoint,\zeta)$ such that
  \begin{equation*}
    \abs*{D_{\STPoint}^\beta D_\zeta^\alpha a(\STPoint, \zeta) }\le C_{\alpha\beta}\bangle*{\zeta}^{m-|\alpha|}
  \end{equation*}
  for all multi-indexes $\alpha$, where $\bangle*{\zeta} =
  (1+|\zeta|^2)^{\frac{1}{2}}$. We also define the symbol class
  \begin{equation*}
    \Psi^{-\infty} := \bigcap_{m\in\mathbb{Z}}\Psi^{m}.
  \end{equation*}
  To each symbol is its associated \emph{pseudo-differential operator}
  acting on Schwartz functions $\phi$,
  \begin{equation*}
    a(x, D)(\STPoint) = Op(a)\phi(\STPoint) := (2\pi)^{-d}\int_{\Real^d}e^{\ImagUnit\STPoint\cdot \zeta}a(\STPoint, \zeta)\widehat{\phi}(\zeta)\,d\zeta,
  \end{equation*}
  where $\widehat{\phi}$ is the Fourier transform of $\phi$. 
\end{definition}

These pseudo-differential operators have well-behaved mapping
properties based on their symbol.
\begin{prop}
  If $a\in \Psi^m(\Real^d)$, then the operator $a(x, D)$ is a
  well-defined mapping from $H^s(\Real^d)$ to $H^{s-m}(\Real^d)$ for
  any $s\in \Real$. 
\end{prop}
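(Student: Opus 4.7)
The plan is to reduce the statement to the $L^2 \to L^2$ boundedness of a zero-order pseudodifferential operator, which is the classical Calder\'on--Vaillancourt result. The reduction uses the symbolic calculus together with the fact that the Bessel potential $\Lambda^s := (1-\Delta)^{s/2}$ is a pseudodifferential operator with symbol $\langle \zeta\rangle^s \in \Psi^s(\Real^d)$, and by definition it is an isometric isomorphism $H^s(\Real^d) \to L^2(\Real^d)$ for every $s \in \Real$.

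First, I would recall the composition theorem of the symbolic calculus: if $a \in \Psi^{m_1}$ and $b \in \Psi^{m_2}$, then the composition $a(x,D) \circ b(x,D)$ is a pseudodifferential operator whose symbol belongs to $\Psi^{m_1 + m_2}$, with principal symbol $a \cdot b$. Applying this twice to the operator
\[
  B := \Lambda^{s-m} \circ a(x,D) \circ \Lambda^{-s},
\]
I obtain that $B = b(x,D)$ for some symbol $b \in \Psi^{(s-m) + m + (-s)} = \Psi^0(\Real^d)$.

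Next, I would invoke the $L^2$-boundedness of zero-order pseudodifferential operators (Calder\'on--Vaillancourt): any $b \in \Psi^0(\Real^d)$ defines a bounded operator $b(x,D) : L^2(\Real^d) \to L^2(\Real^d)$, with operator norm controlled by finitely many of the seminorms $\sup |D_x^\beta D_\zeta^\alpha b(x,\zeta)|\langle\zeta\rangle^{|\alpha|}$. Consequently, for every $u \in H^s(\Real^d)$,
\[
  \|a(x,D) u\|_{H^{s-m}} = \|\Lambda^{s-m} a(x,D) u\|_{L^2} = \|B \Lambda^s u\|_{L^2} \lesssim \|\Lambda^s u\|_{L^2} = \|u\|_{H^s},
\]
which gives the desired continuity.

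The main technical input is the $L^2$-boundedness result for $\Psi^0$, which is non-trivial; it is however completely standard and I would simply cite it from one of the references already given in the text (Chapter~1 of \cite{alinhac_pseudo-differential_2007}, or Chapter~18 of \cite{hormander_analysis_2007}). The rest of the proof is a direct application of the symbolic calculus and of the mapping properties of the Bessel potentials, and involves no further obstacles.
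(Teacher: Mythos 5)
Your proof is correct and is the standard textbook argument: conjugate by the Bessel potentials $\Lambda^{s-m}$ and $\Lambda^{-s}$, apply the composition calculus to land in $\Psi^0$, and then invoke the $L^2$-boundedness of zero-order operators. For context, the paper itself does not provide a proof of this proposition at all — it is stated as a background fact in the "Basic pseudo-differential theory" section and the reader is simply referred to Chapter~1 of Alinhac--G\'erard, Chapter~18 of H\"ormander, or Chapters~1--4 of Taylor; your argument is the one found in those references. One small remark: since the class $\Psi^m$ defined in the paper is the H\"ormander class $S^m_{1,0}$, the full strength of Calder\'on--Vaillancourt (which handles $S^0_{\rho,\rho}$ with $0\le\rho<1$) is not actually needed — $L^2$-boundedness for $S^0_{1,0}$ follows from a more elementary Cotlar--Stein or kernel-decomposition argument — but citing the stronger theorem is of course perfectly valid.
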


\begin{definition}
  We call a symbol $a(\STPoint, \zeta)\in \Psi^m(\Real^d)$ and its
  corresponding operator $a(\STPoint, D)$ an \emph{elliptic symbol} and
    an \emph{elliptic operator} respectively if there exists some $c, C$ such
    that for $\bangle*{\zeta}>C$,
  \begin{equation*}
    \abs*{a(\STPoint, \zeta)}\ge c \bangle*{\zeta}^m.
  \end{equation*}
\end{definition}

\begin{prop}
  \label{nonlinear:prop:parametrix}
  If $a(\STPoint, \zeta)\in \Psi^m$ is elliptic, then it has a
  \emph{parametrix} $b(\STPoint, \zeta)\in \Psi^{-m}$ such that
  \begin{equation*}
    a(x, D)b(x, D) - \Identity \in Op(\Psi^{-\infty}), \qquad b(x, D)a(x, D) - \Identity \in Op(\Psi^{-\infty}).
  \end{equation*}  
\end{prop}

A crucial property of pseudo-differential operators is the following
commutation property.
\begin{prop}[Coifman-Meyer commutator estimate, see Proposition 4.1.A
  in \cite{taylor_pseudodifferential_1991}]
  \label{nonlinear:prop:Coifman-Meyer}
  For $f\in C^\infty$, $P\in OP\Psi^1$,
  \begin{equation*}
    \norm*{[P,f]u}_{L^2} \le C \norm*{f}_{C^1} \norm*{u}_{L^2}.
  \end{equation*}
\end{prop}

Finally, we can also define pseudo-differential operators on a
manifold.

\begin{prop}
  [See Proposition 7.1 in Chapter 1 of \cite{alinhac_pseudo-differential_2007}]
  Let $\phi:\Omega\to \Omega'$ be a smooth diffeomorphism between two
  open subsets of $\Real^d$. Moreover, let $a\in \SymClass^m$ be an
  order $m$ symbol such that the operator $a(\STPoint, D)$ has kernel
  with compact support in $\Omega\times\Omega$.

  Then the following hold. 
  \begin{enumerate}
  \item The function $a'(y, \zeta)$ defined by
    \begin{equation*}
      a'(\phi(\STPoint), \zeta) = e^{-\ImagUnit\phi(\STPoint)\cdot
        \zeta} a(\STPoint,D) e^{\ImagUnit \phi(\STPoint)\cdot \zeta},
      \qquad a' = 0 \text{ for } y\not\in \Omega',
    \end{equation*}
    is also a member of $\SymClass^m$.
  \item The kernel of $a'(\STPoint,D)$ has compact support in
    $\Omega'\times \Omega'$,
  \item For $u\in \TemperedDist(\Real^d)$,
    \begin{equation*}
      a(\STPoint, D)(u\circ \phi) = (a'(\STPoint, D)u)\circ \phi.
    \end{equation*}
  \item If $a$ has the form
    \begin{equation*}
      a= a_m \mod \SymClass^{m-1}(\Real^d),
    \end{equation*}
    where $a_m$ is a homogeneous symbol of order $m$, then the same is
    true for $a'$. That is, there is a homogeneous symbol $a_m'$ of
    order $m$ such that
    \begin{equation*}
      a'= a_m' \mod \SymClass^{m-1}(\Real^d),
    \end{equation*}
    and in fact
    \begin{equation*}
      a'_m(\phi(\STPoint), \zeta) = a_m(\STPoint, \tensor[^t]{{\chi'}}{}(\STPoint)\zeta).
    \end{equation*}
  \end{enumerate}
\end{prop}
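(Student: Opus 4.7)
The plan is to first establish (iii) by reducing to oscillatory exponentials via Fourier inversion, and then derive (i), (ii), (iv) via Kuranishi's trick combined with the standard asymptotic expansion for oscillatory integrals with amplitudes.

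\textbf{Reduction of (iii) to exponentials.} For $u\in\TemperedDist(\Real^d)$, Fourier inversion gives $u(y) = (2\pi)^{-d}\int e^{\ImagUnit y\cdot\zeta}\widehat{u}(\zeta)\,d\zeta$. Since $a(\STPoint,D)$ is continuous on tempered distributions and commutes with the absolutely convergent Fourier integral,
\begin{equation*}
  a(\STPoint,D)(u\circ\phi)(\STPoint) = (2\pi)^{-d}\int a(\STPoint,D)\bigl[e^{\ImagUnit\phi(\cdot)\cdot\zeta}\bigr](\STPoint)\,\widehat{u}(\zeta)\,d\zeta.
\end{equation*}
The defining formula for $a'$ is precisely $a(\STPoint,D)[e^{\ImagUnit\phi(\cdot)\cdot\zeta}](\STPoint) = e^{\ImagUnit\phi(\STPoint)\cdot\zeta}a'(\phi(\STPoint),\zeta)$, so once $a'$ is shown to be a genuine symbol, reassembling the Fourier integral yields $a(\STPoint,D)(u\circ\phi) = (a'(\STPoint,D)u)\circ\phi$, which is (iii).

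\textbf{Kuranishi's trick for (i).} The main analytic step is showing $a' \in \SymClass^m$. Write
\begin{equation*}
  \phi(y) - \phi(\STPoint) = \Phi(\STPoint,y)\,(y-\STPoint),\qquad \Phi(\STPoint,y) := \int_0^1 \phi'(\STPoint+t(y-\STPoint))\,dt,
\end{equation*}
so that $\Phi$ is smooth with $\Phi(\STPoint,\STPoint)=\phi'(\STPoint)$ invertible. Substituting into the kernel representation of $a(\STPoint,D)[e^{\ImagUnit\phi(\cdot)\cdot\zeta}](\STPoint)$, factoring out $e^{\ImagUnit\phi(\STPoint)\cdot\zeta}$, and shifting $\eta\mapsto \eta+\Phi(\STPoint,y)^T\zeta$ (a translation, preserving $d\eta$), we obtain
\begin{equation*}
  a'(\phi(\STPoint),\zeta) = (2\pi)^{-d}\iint e^{\ImagUnit(\STPoint-y)\cdot\eta}\,a\bigl(\STPoint,\Phi(\STPoint,y)^T\zeta+\eta\bigr)\,dy\,d\eta.
\end{equation*}
The compact-support hypothesis on the kernel of $a(\STPoint,D)$ localizes $y$ to a neighborhood of the diagonal on which $\Phi$ is uniformly invertible. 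Applying the standard asymptotic expansion for oscillatory integrals with amplitudes then yields
\begin{equation*}
  a'(\phi(\STPoint),\zeta) \sim \sum_{\alpha}\frac{1}{\alpha!}\left.\p_\eta^\alpha D_y^\alpha\!\bigl[a(\STPoint,\Phi(\STPoint,y)^T\zeta+\eta)\bigr]\right|_{y=\STPoint,\,\eta=0},
\end{equation*}
each term lying in $\SymClass^{m-|\alpha|}$ since derivatives in $y$ do not raise the order in $\zeta$ while $\Phi(\STPoint,y)^T\zeta$ grows linearly in $\zeta$.

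\textbf{Consequences (iv) and (ii).} Claim (iv) is immediate from reading off the $\alpha=0$ term: $a'(\phi(\STPoint),\zeta) \equiv a_m(\STPoint,\phi'(\STPoint)^T\zeta) \pmod{\SymClass^{m-1}}$, matching the stated principal symbol. For (ii), if $u\in C^\infty_c(\Real^d)$ is supported outside a compact set $K'\subset\Omega'$, then $u\circ\phi$ is supported outside $\phi^{-1}(K')\subset\Omega$, and the compact support of the kernel of $a(\STPoint,D)$ in $\Omega\times\Omega$ forces $a'(\STPoint,D)u = (a(\STPoint,D)(u\circ\phi))\circ\phi^{-1}$ to be supported in a compact subset of $\Omega'$. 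The main technical obstacle throughout is the careful bookkeeping of the Kuranishi substitution and the verification that the asymptotic series converges with uniform symbol estimates, which is handled by the reference already cited in the paper.
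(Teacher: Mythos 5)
The paper does not actually prove this proposition: it is quoted verbatim from the cited reference (Proposition 7.1 in Chapter 1 of \cite{alinhac_pseudo-differential_2007}) and used as a black box. Your sketch reproduces the standard argument from precisely that source — Kuranishi's factorization $\phi(y)-\phi(\STPoint)=\Phi(\STPoint,y)(y-\STPoint)$, the translation in $\eta$, and the asymptotic expansion of the resulting amplitude — so there is no divergence in method to report; you have simply supplied the proof the paper delegates to the literature. The outline is correct: the order count for the $\alpha$-term is right (each $y$-derivative falling on $\Phi(\STPoint,y)^{T}\zeta$ costs one order in $\zeta$ but is paired with an extra $\zeta$-derivative of $a$, so the term stays in $\SymClass^{m-|\alpha|}$), and (ii) and (iv) do follow as you say. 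One small point of rigor: the reduction of (iii) via Fourier inversion is not literally valid for $u\in\TemperedDist(\Real^d)$, since $\widehat{u}$ need not be a function and the integral is not absolutely convergent; the clean route is to prove (iii) for $u\in\mathcal{S}(\Real^d)$ (or $C_c^\infty$), where the exchange of $a(\STPoint,D)$ with the Fourier integral is justified by continuity of $a(\STPoint,D)$ on $\mathcal{S}$, and then extend to $\TemperedDist$ by duality once (i) and (ii) guarantee that $a'(\STPoint,D)$ is itself a properly supported pseudo-differential operator. With that adjustment, and the acknowledged deferral of the remainder estimates for the asymptotic series to the reference, the argument is sound.
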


\begin{definition}
  [See Definition 7.1 in Chapter 1 of \cite{alinhac_pseudo-differential_2007}]
  An operator $A:C_0^\infty(\mathcal{M})\to C^\infty(\mathcal{M})$ is
  a \emph{pseudo-differential operator of order} $m$ is for any
  coordinate system $\kappa:V\to \widetilde{V} \subset \Real^n$, the
  transported operator
  \begin{align*}
    \widetilde{A}:C_0^\infty(\widetilde{V}) &\to  C_0^\infty(\widetilde{V}),\\
    u&\mapsto A(u\circ \kappa)\circ \kappa^{-1}
  \end{align*}
  is a pseudo-differential of operator of order $m$ in
  $\widetilde{V}$. In other words, $A$ is a pseudo-differential
  operator of order $m$ if for all $\phi,\psi\in
  C^\infty_0(\widetilde{V})$, $\phi\widetilde{A}\psi\in Op(\SymClass^m)$.
\end{definition}

\subsection{Perturbations of linear estimates}

In this section, we prove a series of estimates for $\LinEinsteinS_g$
where $g = g_b + \tilde{g}$, $g_b$ is a slowly-rotating \KdS{} metric,
and $\tilde{g}$ is an exponentially decaying perturbation. 

We first introduce a series of basic results that stem from the fact
that $\tilde{g}$ is an exponentially decaying perturbation.
\begin{lemma}
  \label{nonlinear:lemma:BA-E:Deform-Tensor:perturb}
  Let $g = g_b + \tilde{g}$, where $g_b$ is a fixed, slowly-rotating
  \KdS{} metric, and $\tilde{g}$ satisfies the bootstrap
  assumptions \eqref{nonlinear:BA-E}, \eqref{nonlinear:BA-D}.  Then for $X$ a smooth
  vectorfield with deformation tensor $\DeformationTensor{X}$,
  \begin{equation}
    \label{nonlinear:eq:BA-E:Deform-Tensor:perturb}
    \DeformationTensor{\MorawetzVF}_g
    = \DeformationTensor{\MorawetzVF}_{g_b} + \DeformationTensorErr{\MorawetzVF},
  \end{equation}
  where
  \begin{equation*}
    \abs*{\DeformationTensorErr{\MorawetzVF}} \lesssim \BSConstant e^{-\BSDDecay \tStar}.
  \end{equation*}
\end{lemma}
\begin{proof}
  The inequality in \eqref{nonlinear:eq:BA-E:Deform-Tensor:perturb} follows
  immediately from observing that in local coordinates,
  \begin{equation*}
    (\DeformationTensor{X}_g)_{\mu\nu} = \frac{1}{2}\left(X(g_{\mu\nu})
      + g_{\gamma\nu} \frac{\p X^\gamma}{\p x^\mu}
      + g_{\gamma\mu} \frac{\p X^\gamma}{\p x^\nu}
    \right)
  \end{equation*}
  and that $g = g_b + \tilde{g}$.
\end{proof}

As a result, we immediately have the following corollary.
\begin{corollary}
  \label{nonlinear:coro:BA-E:K-Current:perturb}
  For $g$ as in Lemma \ref{nonlinear:lemma:BA-E:Deform-Tensor:perturb}, if $\MorawetzVF$
  is a smooth vectorfield, and $\LagrangeCorr$ is a smooth function, then 
  \begin{equation}
    \label{nonlinear:eq:BA-E:K-Current:perturb}
    \KCurrent{\MorawetzVF, \LagrangeCorr, m}_{g}[h]
    = \KCurrent{\MorawetzVF, \LagrangeCorr, m}_{g_b}[h]
    + \KCurrentPert{\MorawetzVF, \LagrangeCorr, m}[h], 
  \end{equation}
  where
  \begin{equation*}
    \KCurrentPert{\MorawetzVF, \LagrangeCorr, m}[h]
    \lesssim \BSConstant e^{-\BSDDecay \tStar}\left(|\nabla h|^2 + |h|^2\right). 
  \end{equation*}
  Moreover,
  \begin{equation*}
    \label{nonlinear:eq:BA-E:SX:perturb}
    \left(\SubPOp_{g}h\cdot \MorawetzVF\overline{h}\right) =
    \left(\SubPOp_{g_b}h\cdot \MorawetzVF\overline{h}\right)
    + \widetilde{\SubPOp}[\MorawetzVF][h], 
  \end{equation*}
  where
  \begin{equation*}
    \widetilde{\SubPOp}[\MorawetzVF][h] \lesssim \BSConstant e^{-\BSDDecay \tStar}|\nabla h|^2. 
  \end{equation*}
\end{corollary}
\begin{proof}
  The conclusion in \eqref{nonlinear:eq:BA-E:K-Current:perturb} follows directly
  from Lemma \ref{nonlinear:lemma:BA-E:Deform-Tensor:perturb} and the definition
  of $\KCurrent{X,q,m}[h]$ in \eqref{nonlinear:eq:J-K-currents:def}.

  The second conclusion follows from the observation that
  $\SubPOp_{g}$ is a linear combination of terms of the form
  \begin{equation*}
    S(g, \p g)^\mu\p_\mu,
  \end{equation*}
  where $S(g, \p g)$ is linear in its arguments. The conclusion then
  follows directly from the Sobolev inequality. 
\end{proof}

The redshift vector $\RedShiftN$ of Proposition
\ref{nonlinear:prop:redshift:N-construction} is uniformly timelike on
$\StaticRegionWithExtension$ with respect to slowly-rotating \KdS{}
metrics $g_b$. 
Since we are only considering metrics
$g=g_b+\tilde{g}$ which are small (and exponentially decaying)
perturbation of $g_b$, we must have that $\RedShiftN$ remains globally
timelike on $g$ as well.
\begin{lemma}
  \label{nonlinear:lemma:Energy:N-uniformly-timelike}
  Fix $g_b$ a slowly-rotating \KdS{} metric, and let $\RedShiftN$ be
  as constructed in Proposition
  \ref{nonlinear:prop:redshift:N-construction}. Then for $\BSConstant$ sufficiently
  small, if $\tilde{g}\in S^2T^*\StaticRegionWithExtension$ is a metric
  perturbation such that
  \begin{equation*}
    \sup_{\tStar>0}e^{\BSDDecay\tStar}\norm{\tilde{g}}_{\HkWithT{3}(\Sigma_{\tStar})}\le \BSConstant,
  \end{equation*}
  then $\RedShiftN$ is uniformly timelike with respect to the perturbed
  \KdS{} metric $g = g_b + \tilde{g}$,
  \begin{equation*}
    g(\RedShiftN, \RedShiftN) < 0. 
  \end{equation*}
\end{lemma}
\begin{proof}
  The result of the lemma then follows from
  \eqref{nonlinear:eq:RedShiftN-timelike} by choosing sufficiently small
  $\BSConstant$.
\end{proof}
The globally timelike nature of $\RedShiftN$ immediately gives the
following Gronwall-type estimate.
\begin{lemma}
  \label{nonlinear:lemma:Energy:Gronwall}
  Let $g = g_b + \tilde{g}$, where $g_b$ is a sufficiently
  slowly-rotating \KdS{} metric, and
  $\sup_{\tStar}e^{\BSDDecay\tStar}
  \norm{\tilde{g}}_{\FullHk{3}(\Sigma_{\tStar})} \le \BSConstant$,
  where $\BSConstant$ is sufficiently small.  Furthermore, let $h$ be
  a solution to the Cauchy problem
  \begin{equation}
    \label{nonlinear:eq:Energy:Gronwall-Cauchy-Prob}
    \begin{split}
      \LinEinsteinS_g h &= f,\\
      \gamma_0(h) &= (h_0, h_1),
    \end{split}    
  \end{equation}
  with $\LinEinsteinS_g$ as in
  \eqref{nonlinear:eq:EVE:quasilinear-LinEinstein-def}.  Then, there exists some
  $\GronwallExp$ such that the solution $h$ to
  \eqref{nonlinear:eq:Energy:Gronwall-Cauchy-Prob} satisfies the following
  energy estimate
  \begin{align}
    \sup_{\tStar\le \TStar} e^{-\GronwallExp\tStar} \norm*{h}_{\HkWithT{1}(\Sigma_{\tStar})}
    \lesssim {} &
                  \norm*{(h_0, h_1)}_{\LSolHk{1}(\Sigma_0)}
                  + \int_0^{\TStar}e^{-\GronwallExp\tStar}\norm*{f}_{\InducedLTwo(\Sigma_{\tStar})}\,d\tStar.   \label{nonlinear:eq:Energy:Gronwall-estimate}                   
  \end{align}
\end{lemma}

\begin{proof}                 
  Consider the energy on the spacelike slices
  $\Sigma_{\tStar}$,
  \begin{equation*}
    \RedShiftEnergy(\tStar)[h] = \int_{\Sigma_{\tStar}}\JCurrent{\RedShiftN, 0, 0}_g[h] \cdot n_{\Sigma_{\tStar}},
  \end{equation*}
  where $\JCurrent{\RedShiftN, 0, 0}_g[h]$ is as defined in
  \eqref{nonlinear:eq:div-them:J-K-currents}.  From Lemma
  \ref{nonlinear:lemma:Energy:N-uniformly-timelike}, $\RedShiftN$ is timelike on
  all of $\StaticRegionWithExtension$. As a result,
  \begin{equation}
    \label{nonlinear:eq:perturbed-redshift-coercivity}
    \RedShiftEnergy(\tStar)[h]
    \lesssim \norm*{\nabla h}_{L^2(\Sigma_{\tStar})}^2
    \lesssim \RedShiftEnergy(\tStar)[h].
  \end{equation}
  Now we apply the divergence theorem in Proposition
  \ref{nonlinear:prop:energy-estimates:spacetime-divergence-prop} with
  $X = \JCurrent{\RedShiftN, 0, 0}_g[h]$ on the region
  \begin{equation*}
    \DomainOfIntegration:= [0, \TStar]\times\Sigma,
  \end{equation*}
  where we recall the definition of $\Sigma$ in \eqref{nonlinear:eq:Sigma-def}. 
  
  Due to the timelike
  future-directed nature of $\RedShiftN$,  
  \begin{equation*}
    \int_{\Sigma_{\tStar} \bigcap \EventHorizonFuture_{-}}\JCurrent{\RedShiftN,0, 0}_g[h] \cdot n_{\EventHorizonFuture_{-}} \ge 0 ,\qquad
    \int_{\Sigma_{\tStar} \bigcap \CosmologicalHorizonFuture_{+} }\JCurrent{\RedShiftN,0, 0}_g[h] \cdot n_{\CosmologicalHorizonFuture_{+}} \ge 0.
  \end{equation*}
  To estimate the divergence term, we use
  \eqref{nonlinear:eq:div-them:J-K-currents} with $X=N, q=0$, i.e.
  \begin{equation*}
    \nabla_g\cdot \JCurrent{\RedShiftN, 0, 0}_g[h] = \Re\left[
      \RedShiftN\overline{h}\cdot \ScalarWaveOp[g]h
    \right]
    + \KCurrent{\RedShiftN, 0, 0}_g[h].
  \end{equation*}
  Using Corollary \ref{nonlinear:coro:BA-E:K-Current:perturb}, we can write
  \begin{equation*}
    \KCurrent{\RedShiftN, 0, 0}_g[h]
    = \KCurrent{\RedShiftN, 0, 0}_{g_b}[h] + \KCurrentPert{\RedShiftN, 0, 0}[h],
  \end{equation*}
  where
  \begin{equation*}
    \abs*{\KCurrentPert{\RedShiftN, 0, 0}[h]}\lesssim \BSConstant e^{-\BSDDecay\tStar}(|\nabla h|^2+ |h|^2). 
  \end{equation*}
  Since $\RedShiftN$ is uniformly timelike with respect to $g$
  on $\StaticRegionWithExtension$, we have that 
  \begin{equation*}
    \int_{\DomainOfIntegration}\KCurrent{\RedShiftN, 0, 0}_{g}[h]
    + \abs*{\Re\bangle*{\RedShiftN h, \SubPOp_g h}_{L^2(\DomainOfIntegration)}}
    + \abs*{\Re\bangle*{\RedShiftN h, \PotentialOp_g h}_{L^2(\DomainOfIntegration)}}
    \lesssim \int_0^{\TStar}\RedShiftEnergy(\tStar)[h]\,d\tStar + \norm{h}_{L^2(\DomainOfIntegration)}^2.
  \end{equation*}
  Thus, using the divergence theorem, there exists a constant
  $\GronwallExp$ sufficiently large, such that
  \begin{equation}
    \sup_{0\le \tStar\le\TStar}\RedShiftEnergy(\tStar)[h]
    \le \RedShiftEnergy(0)[h]
    + C\int_0^{\TStar}\norm*{\LinEinsteinS_gh}_{\InducedLTwo(\Sigma_{\tStar})}^2\,d\tStar
    + \GronwallExp \int_{0}^{\TStar}\RedShiftEnergy(\tStar)[h]\,d\tStar .
  \end{equation}
  Then, directly via Gronwall's Lemma, we have that
  \begin{equation*}
    e^{-\GronwallExp\tStar}\RedShiftEnergy(\tStar)[h] \lesssim \RedShiftEnergy(0)[h] + \int_0^{\TStar}e^{-\GronwallExp\tStar}\norm*{\LinEinsteinS_g h}_{\InducedLTwo(\Sigma_{\tStar})}^2\,d\tStar,
  \end{equation*}
  from which the conclusion follows immediately given
  \eqref{nonlinear:eq:perturbed-redshift-coercivity}.
\end{proof}

We also have the following analogue of Lemma
\ref{linear:lemma:ILED-near:D-t-minus-1-control} in
\cite{fang_linear_2021}, which follows from a simple
integration-by-parts argument. 
\begin{lemma}
  \label{nonlinear:lemma:BA-E:D-t-minus-1-control}
  Let $g = g_b + \tilde{g}$, where $g_b$ is a sufficiently
  slowly-rotating \KdS{} metric, and
  $\sup_{\tStar}e^{\BSDDecay\tStar}\norm{\tilde{g}}_{\FullHk{3}(\Sigma_{\tStar})}
  \le \BSConstant$, where $\BSConstant$ is sufficiently small, and let
  \begin{equation*}
    \DomainOfIntegration := [0, \TStar] \times \Sigma,
  \end{equation*}
  and let $h$ be a sufficiently regular function on
  $\DomainOfIntegration$ such that $h(\tStar, \cdot)$ is compactly
  supported on $\Sigma$ for all $\tStar>0$. Then
  \begin{equation*}
    \norm*{\bangle*{D_x}^{-1}D_{\tStar}h}_{L^2(\DomainOfIntegration)}
    \lesssim \norm*{\LinEinsteinS_{g}h}_{H^{-1}(\DomainOfIntegration)}
    + \norm*{h}_{L^2(\DomainOfIntegration)}
    + \norm*{h}_{H^1(\Sigma_{\TStar})}
    + \norm*{h}_{H^1(\Sigma_{0})}
    . 
  \end{equation*}
\end{lemma}

\begin{proof}
  Let $R_{-1}\in \Op S^{-1}(\TrappingNbhd)$ be a compactly supported
  self-adjoint operator. We use $R_{-1}$ as a Lagrangian
  multiplier. Integrating by parts, we have that 
  \begin{align*}
    &2\bangle*{(g^{\tStar\tStar})^{-1}\ScalarWaveOp[g_b]h, R_{-1}^2h}_{L^2(\DomainOfIntegration)}\\
    ={}& \norm*{R_{-1}D_{\tStar}h}_{L^2(\DomainOfIntegration)}^2
         + O\left(
         (1+\norm*{\tilde{g}}_{W^{1,\infty}(\DomainOfIntegration)})
         \left(\norm*{R_{-1}D_{\tStar}h}_{L^2(\DomainOfIntegration)}\norm*{h}_{L^2(\DomainOfIntegration)}
         + \norm*{h}_{L^2(\DomainOfIntegration)}^2\right)\right)\\        
    & + \evalAt*{\bangle*{n_{\TrappingNbhd}h, R_{-1}h}_{\InducedLTwo(\TrappingNbhd)}}_{\tStar=0}^{\tStar=\TStar}. 
  \end{align*}
  Thus, applying Cauchy-Schwarz,
  \begin{equation}
    \label{nonlinear:eq:ILED-near:D-t-minus-1-control:aux1}
    \norm*{R_{-1}\p_{\tStar}h}_{L^2(\DomainOfIntegration)}^2    
    \lesssim \norm*{\LinEinsteinS_{g}h}_{L^2(\DomainOfIntegration)}^2
    + \norm*{h}_{L^2(\DomainOfIntegration)}^2
    + \norm*{h}_{\HkWithT{1}(\Sigma_{\TStar})}^2
    + \norm*{h}_{\HkWithT{1}(\Sigma_{0})}^2. 
  \end{equation}
  as desired. 
\end{proof}

We now recall the following result from the linear theory:
\begin{prop}
  \label{nonlinear:prop:perturbed-ILED:lin-theory}
  Fix  $\delta>0$. Let $g_b$ be a fixed, sufficiently
  slowly-rotating \KdS{} metric, and let $\DomainOfIntegration = [0,
  \TStar]\times\Sigma$. 

  Then, there exist some vectorfield multipliers $\MorawetzOuterVF$
  and $\MorawetzInnerNTVF$, smooth function $\LagrangeCorrOuter$ and
  $\LagrangeCorrInnerNT$, one-form $\breve{m}$, elliptic zero-order
  operator $\PseudoSubPFixer$, and cutoff functions
  \begin{equation*}
    \mathring{\chi},\breve{\chi},\check{\chi}\in \Op S^0, 
  \end{equation*}
  such that the following properties hold.
  \begin{enumerate}
  \item $\mathring{\chi}$ and $\breve{\chi}$ satisfy the conditions
    that
    \begin{equation}
      \label{nonlinear:eq:perturbed-ILED:lin-theory:cutoff:def}
      \Sigma \subset \supp \mathring{\chi}\bigcup\supp  \breve{\chi}\bigcup\supp  \check{\chi},\qquad
      \supp \p\chi_1\bigcap\supp \p \chi_2 = \emptyset, \quad \chi_1, \chi_2 \in \{\mathring{\chi},\breve{\chi},\check{\chi}\}.
    \end{equation}
  \item The following inequalities hold,
    \begin{align}
      &\int_{\DomainOfIntegration}\KCurrent{\MorawetzOuterVF, \LagrangeCorrOuter, m}_{g_b}[\breve{\chi}h]
        - \bangle*{\SubPOp_{g_b}[\breve{\chi}h], \MorawetzOuterVF (\breve{\chi}h)}_{L^2(\DomainOfIntegration)}
        + \int_{\DomainOfIntegration}\KCurrent{\MorawetzInnerNTVF, \LagrangeCorrInnerNT, 0}_{g_b}[\breve{\chi}h]
        - \bangle*{\SubPOp_{g_b}[\check{\chi}h], \MorawetzInnerNTVF (\check{\chi}h)}_{L^2(\DomainOfIntegration)}
        \notag \\
      \gtrsim{}& \norm*{\breve{\chi}h}_{H^1(\DomainOfIntegration)}^2
                 + \norm*{\check{\chi}h}_{H^1(\DomainOfIntegration)}^2
                 +\norm*{C\nabla(\breve{\chi}h)}_{L^2(\DomainOfIntegration)}\norm*{\MorawetzOuterVF(\breve{\chi}h)}_{L^2(\DomainOfIntegration)}\notag \\
                & +\norm*{C\nabla(\check{\chi}h)}_{L^2(\DomainOfIntegration)}\norm*{\MorawetzOuterVF(\check{\chi}h)}_{L^2(\DomainOfIntegration)}
                 - C_1 \norm{h}_{L^2(\DomainOfIntegration)}^2 ,\label{nonlinear:eq:perturbed-ILED:lin-theory:non-trapping-bulk}
    \end{align}
    and 
    \begin{align}    
      \bangle*{\SubPConjOp_{g_b}[\mathring{\chi}h], \KillT(\mathring{\chi}h)}_{L^2(\DomainOfIntegration)}
      &\le \delta \norm{\mathring{\chi}h}_{H^1(\DomainOfIntegration)}^2  + C_1\norm{h}_{L^2(\DomainOfIntegration)}^2, \label{nonlinear:eq:perturbed-ILED:lin-theory:trapping-bulk}
    \end{align}
    where
    \begin{equation*}
      \SubPConjOp_{g_b} = \PseudoSubPFixer \SubPOp_{g_b} \PseudoSubPFixer^- + \PseudoSubPFixer \left[\ScalarWaveOp[g_b],\PseudoSubPFixer^-\right],
    \end{equation*}
    where we denote by $\PseudoSubPFixer^-$ the parametrix of
    $\PseudoSubPFixer$, and
    \begin{align}      
      \norm*{h}_{H^1(\DomainOfIntegration)}^2
      \lesssim{}& \norm*{h}_{L^2(\DomainOfIntegration)}^2
                  + \int_{\DomainOfIntegration}\KCurrent{\MorawetzOuterVF, \LagrangeCorrOuter, \breve{m}}_{g_b}[\breve{\chi}h]
                  - \bangle*{\SubPOp_{g_b}[\breve{\chi}h], \MorawetzOuterVF (\breve{\chi}h)}_{L^2(\DomainOfIntegration)}
                  + \int_{\DomainOfIntegration}\KCurrent{\MorawetzInnerNTVF, \LagrangeCorrInnerNT, 0}_{g_b}[\check{\chi}h]\notag\\
                  & - \bangle*{\SubPOp_{g_b}[\check{\chi}h], \MorawetzInnerNTVF (\check{\chi}h)}_{L^2(\DomainOfIntegration)}
                  + \int_{\DomainOfIntegration}\KCurrent{\KillT, 0, 0}_{g_b}[\mathring{\chi}h]
                  - \bangle*{\SubPConjOp_{g_b}[\mathring{\chi}h], \KillT (\mathring{\chi}h)}_{L^2(\DomainOfIntegration)}\notag\\
                  & + \bangle*{\left[\ScalarWaveOp[g_b], \breve{\chi}\right]h, \breve{\chi}\MorawetzOuterVF h}_{L^2(\DomainOfIntegration)}
                  + \bangle*{\left[\ScalarWaveOp[g_b], \check{\chi}\right]h, \check{\chi}\MorawetzInnerNTVF h}_{L^2(\DomainOfIntegration)}
                  + \bangle*{\left[\ScalarWaveOp[g_b], \mathring{\chi}\right]h, \mathring{\chi}\KillT h}_{L^2(\DomainOfIntegration)} 
                  \label{nonlinear:eq:perturbed-ILED:lin-theory:gluing}.
    \end{align}
  \item $\KillT$ is uniformly timelike with respect to $g_b$ on the support of
    $\mathring{\chi}$,
    \begin{equation}
      \label{nonlinear:eq:perturbed-ILED:lin-theory:KillT-timelike}
      g_b(\KillT, \KillT) < 0, \qquad \text{ on }\supp\mathring{\chi}.
    \end{equation}  
  \item $\JCurrent{X,q,m}_{g_b}[\breve{\chi}h]$ satisfies the following
    properties 
    along $\EventHorizonFuture_-$ and $\CosmologicalHorizonFuture_+$, 
    \begin{equation}
      \label{nonlinear:eq:perturbed-ILED:lin-theory:boundary-flux:nontrapping}
      \int_{\EventHorizonFuture_-}\JCurrent{\MorawetzOuterVF,\LagrangeCorrOuter,\breve{m}}_{g_b}[\breve{\chi}h]\cdot n_{\EventHorizonFuture_-} \ge 0, \qquad
      \int_{\CosmologicalHorizonFuture_+}\JCurrent{\MorawetzOuterVF,\LagrangeCorrOuter,\breve{m}}_{g_b}[\breve{\chi}h]\cdot n_{\CosmologicalHorizonFuture_+} \ge 0.
    \end{equation}
    On the other hand, $\mathring{\chi}h, \check{\chi}{h}$ vanish
    along $\EventHorizonFuture_-$, $\CosmologicalHorizonFuture_+$. 
  \end{enumerate}
\end{prop}
\begin{remark}
  We remark that $\PseudoSubPFixer$, $\mathring{\chi}$, and
  $\breve{\chi}$ are all pseudodifferential operators, although their
  exact construction is irrelevant here. 
\end{remark}
\begin{proof}
  Let
  \begin{equation*}
    \mathring{\chi}= \mathring{c}\mathring{\chi}\mathring{\chi}_\zeta, \qquad
    \check{\chi} = \check{c}\mathring{\chi}\breve{\chi}_\zeta,\qquad
    \breve{\chi} = \breve{\chi},
  \end{equation*}
  as in Section \ref{linear:sec:ILED} of \cite{fang_linear_2021},
  and
  \begin{equation*}
    (\MorawetzOuterVF, \LagrangeCorrOuter, \breve{m}) := (\MorawetzOuterVF + \dot{c}\dot{\chi}^2\RedShiftN, \LagrangeCorrOuter, \breve{m}),\qquad
    (\MorawetzInnerNTVF, \LagrangeCorrInnerNT, 0) := (\MorawetzInnerNTVF, \LagrangeCorrInnerNT, 0)
  \end{equation*}
  where the right-hand sides of the definitions are as defined in
  \eqref{linear:eq:ILED-nontrapping:XOuter-fOuter-def} and Lemma
  \eqref{linear:lemma:ILED-nontrapping:extension}  of
  \cite{fang_linear_2021}, and in
  \eqref{linear:eq:ILED-nontrapping-freq:op-defs} of
  \cite{fang_linear_2021} respectively.

  The bulk estimate in
  \eqref{nonlinear:eq:perturbed-ILED:lin-theory:non-trapping-bulk} is contained
  in Theorem \ref{linear:thm:ILED-redshift:main}, Lemma
  \ref{linear:lemma:ILED-nontrapping:bulk-positivity}, and Lemma
  \ref{linear:lemma:ILED-nontrapping-freq:bulk-positivity} in
  \cite{fang_linear_2021}.

  The combined bulk estimate containing the commutation error terms in
  \eqref{nonlinear:eq:perturbed-ILED:lin-theory:gluing} is directly implied by   
  Lemma \ref{linear:lemma:ILED-full:Trapping-NonTrapping:case-2} in
  \cite{fang_linear_2021}.  
  
  The flux estimate in
  \eqref{nonlinear:eq:perturbed-ILED:lin-theory:boundary-flux:nontrapping} is
  contained in
  \eqref{linear:eq:ILED-nontrapping:extension:boundary-flux} in
  \cite{fang_linear_2021}.

  The remaining statements follow by construction of
  $\mathring{\chi}$, $\check{\chi}$, $\breve{\chi}$, and the definition of $g_b$. 
\end{proof}

These properties are in fact preserved under
perturbation by an exponentially decaying perturbation $\tilde{g}$. 

\begin{lemma}
  \label{nonlinear:lemma:perturbed-ILED:perturbed-bulk}
  Fix some $C>0$,  $\delta>0$. Let $g_b$ be a fixed, sufficiently
  slowly-rotating \KdS{} metric. Then let the spacetime region
  $\DomainOfIntegration$, vectorfield multipliers $\MorawetzOuterVF$
  and $\MorawetzInnerNTVF$, smooth function $\LagrangeCorrOuter$ and
  $\LagrangeCorrInnerNT$, one-form $\breve{m}$, elliptic zero-order
  operator $\PseudoSubPFixer$, and cutoffs
  $\mathring{\chi}, \check{\chi}$ and $\breve{\chi}$ be those of
  Proposition \ref{nonlinear:prop:perturbed-ILED:lin-theory}. Furthermore, let
  $h$ be a function compactly supported on $\DomainOfIntegration$. If
  \begin{equation*}
    g = g_b + \tilde{g}, \qquad \sup_{\tStar>0}e^{\BSDDecay\tStar}\norm{\tilde{g}}_{\HkWithT{3}(\Sigma_{\tStar})} \le \BSConstant,
  \end{equation*}
  then for sufficiently small $\BSConstant$, the following properties
  hold.
  \begin{enumerate}
  \item There exists some $C_1>0$ sufficiently large so that 
    \begin{align}
      &\int_{\DomainOfIntegration}\KCurrent{\MorawetzOuterVF, \LagrangeCorrOuter, \breve{m}}_{g}[\breve{\chi}h]
        - \bangle*{\SubPOp_{g}[\breve{\chi}h], \MorawetzOuterVF (\breve{\chi}h)}_{L^2(\DomainOfIntegration)}                
        + \int_{\DomainOfIntegration}\KCurrent{\MorawetzInnerNTVF, \LagrangeCorrInnerNT, 0}_{g}[\breve{\chi}h]
        - \bangle*{\SubPOp_{g}[\check{\chi}h], \MorawetzInnerNTVF (\check{\chi}h)}_{L^2(\DomainOfIntegration)}
        \notag \\
      \gtrsim{}& \norm*{\breve{\chi}h}_{H^1(\DomainOfIntegration)}^2
                 + \norm*{\check{\chi}h}_{H^1(\DomainOfIntegration)}^2
                 +\norm*{C\nabla(\breve{\chi}h)}_{L^2(\DomainOfIntegration)}\norm*{\MorawetzOuterVF(\breve{\chi}h)}_{L^2(\DomainOfIntegration)}\notag \\
                & +\norm*{C\nabla(\check{\chi}h)}_{L^2(\DomainOfIntegration)}\norm*{\MorawetzOuterVF(\check{\chi}h)}_{L^2(\DomainOfIntegration)}
             - C_1 \norm{h}_{L^2(\DomainOfIntegration)}^2,    \label{nonlinear:eq:perturbed-ILED:non-trapping-bulk} 
    \end{align}
    and
    \begin{align}        
      \int_{\DomainOfIntegration}\KCurrent{\KillT, 0, 0}_{g}[\mathring{\chi}h]
      -\bangle*{\SubPConjOp_{g}[\mathring{\chi}h], \KillT (\mathring{\chi}h)}_{L^2(\DomainOfIntegration)}
      &\le \delta \norm{\mathring{\chi}h}_{H^1(\DomainOfIntegration)}^2
        + C_1\norm{\mathring{\chi}h}_{L^2(\DomainOfIntegration)}^2, \label{nonlinear:eq:perturbed-ILED:trapping-bulk}
    \end{align}
    where
    \begin{equation}
      \label{nonlinear:eq:perturbed-ILED:tilde-S:def}
      \SubPConjOp_{g} = \SubPConjOp_{g_b} + \widetilde{\SubPOp}_{g,g_b},\qquad
      \widetilde{\SubPOp}_{g,g_b} := \PseudoSubPFixer \left[\ScalarWaveOp[g] - \ScalarWaveOp[g_b], \PseudoSubPFixer^{-}\right] + \PseudoSubPFixer \left(\SubPOp_g-\SubPOp_{g_b}\right)\PseudoSubPFixer^{-} ,
    \end{equation}
    and
    \begin{align}      
      \norm*{h}_{H^1(\DomainOfIntegration)}^2
      \lesssim{}& \norm*{h}_{L^2(\DomainOfIntegration)}^2
                  + \int_{\DomainOfIntegration}\KCurrent{\MorawetzOuterVF, \LagrangeCorrOuter, \breve{m}}_{g}[\breve{\chi}h]
                  - \bangle*{\SubPOp_{g}[\breve{\chi}h], \MorawetzOuterVF (\breve{\chi}h)}_{L^2(\DomainOfIntegration)}
                  + \int_{\DomainOfIntegration}\KCurrent{\MorawetzInnerNTVF, \LagrangeCorrInnerNT, 0}_{g}[\check{\chi}h]\notag\\
                  & - \bangle*{\SubPOp_{g}[\check{\chi}h], \MorawetzInnerNTVF (\check{\chi}h)}_{L^2(\DomainOfIntegration)}
                  + \int_{\DomainOfIntegration}\KCurrent{\KillT, 0, 0}_{g}[\mathring{\chi}h]
                  - \bangle*{\SubPConjOp_{g}[\mathring{\chi}h], \KillT (\mathring{\chi}h)}_{L^2(\DomainOfIntegration)}\notag\\
                  & + \bangle*{\left[\ScalarWaveOp[g], \breve{\chi}\right]h, \breve{\chi}\MorawetzOuterVF h}_{L^2(\DomainOfIntegration)}
                  + \bangle*{\left[\ScalarWaveOp[g], \check{\chi}\right]h, \check{\chi}\MorawetzInnerNTVF h}_{L^2(\DomainOfIntegration)}
                  + \bangle*{\left[\ScalarWaveOp[g], \mathring{\chi}\right]h, \mathring{\chi}\KillT h}_{L^2(\DomainOfIntegration)} 
                  \label{nonlinear:eq:perturbed-ILED:gluing}.
    \end{align}
      
  \item $\KillT$ is uniformly timelike with respect to $g$ on the
    support of $\mathring{\chi}$, i.e.
    \begin{equation}
      \label{nonlinear:eq:perturbed-ILED:KillT-timelike}
      g(\KillT, \KillT) < 0,\quad \text{ on }\supp \mathring{\chi}.
    \end{equation}
  \item $\JCurrent{\MorawetzOuterVF,\LagrangeCorrOuter,\breve{m}}_{g}[\breve{\chi}h]$ has the following
    properties along $\EventHorizonFuture_-$ and
    $\CosmologicalHorizonFuture_+$,
    \begin{equation}
      \label{nonlinear:eq:perturbed-ILED:boundary-flux:nontrapping}
      \int_{\EventHorizonFuture_-}\JCurrent{\MorawetzOuterVF,\LagrangeCorrOuter,\breve{m}}_{g}[\breve{\chi}h]\cdot n_{\EventHorizonFuture_-} \ge 0, \qquad
      \int_{\CosmologicalHorizonFuture_+}\JCurrent{\MorawetzOuterVF,\LagrangeCorrOuter,\breve{m}}_{g}[\breve{\chi}h]\cdot n_{\CosmologicalHorizonFuture_+} \ge 0.
    \end{equation}
    On the other hand, $\mathring{\chi}h$ and $\check{\chi}h$ vanish along
    $\EventHorizonFuture_-$, $\CosmologicalHorizonFuture_+$.
  \end{enumerate}  
\end{lemma}
\begin{proof}
  Observe that in light of Proposition
  \ref{nonlinear:prop:perturbed-ILED:lin-theory} and
  \ref{nonlinear:lemma:BA-E:Deform-Tensor:perturb}, the first conclusion in
  \eqref{nonlinear:eq:perturbed-ILED:non-trapping-bulk} follows immediately.

  For the second and third conclusion in
  \eqref{nonlinear:eq:perturbed-ILED:gluing} and
  \eqref{nonlinear:eq:perturbed-ILED:trapping-bulk} respectively, we use
  following observation. For any zero-order pseudo-differential
  operator $P_0\in \Op S^0(\Sigma)$,
  \begin{equation*}
    \left[\ScalarWaveOp[g] - \ScalarWaveOp[g_b], P_0\right]
    = \tilde{g}^{\alpha\beta} \left[\p_\alpha\p_\beta, P_0\right]
    +\left[\tilde{g}^{\alpha\beta}, P_0\right]\p_\alpha\p_\beta
    + \norm*{\tilde{g}}_{W^{1,\infty}(\DomainOfIntegration)}\Op S^0(\Sigma)
    .
  \end{equation*}
  Then since $\tilde{g}=g-g_b$ is a scalar function, we have by the
  classical Coifman-Meyer commutator estimates in
  Proposition \ref{nonlinear:prop:Coifman-Meyer}  
  \begin{equation}
    \label{nonlinear:eq:perturbed-ILED:zero-order-commutation-control}
    \norm*{\tilde{g}^{\alpha\beta} \left[\p_\alpha\p_\beta, P_0\right]h 
      + \left[\tilde{g}^{\alpha\beta}, P_0\right]\p_\alpha\p_\beta h}_{L^2(\DomainOfIntegration)}
    \lesssim \norm{\tilde{g}}_{W^{1,\infty}(\DomainOfIntegration)}\norm{h}_{H^1(\DomainOfIntegration)}. 
  \end{equation}  
  
  To prove the second conclusion in
  \eqref{nonlinear:eq:perturbed-ILED:trapping-bulk}, we first observe that since
  $\KillT$ is Killing on $g_b$, applying Lemma
  \ref{nonlinear:lemma:BA-E:Deform-Tensor:perturb} immediately yields that
  \begin{equation}
    \label{nonlinear:eq:perturbed-ILED:K-t-on-g:aux}
    \abs*{\int_{\DomainOfIntegration}\KCurrent{\KillT, 0, 0}_{g}[\mathring{\chi}h]}
    \lesssim \norm{\tilde{g}}_{W^{1,\infty}(\DomainOfIntegration)}\norm{\mathring{\chi}h}_{H^1(\DomainOfIntegration)}^2.
  \end{equation}
  Applying, \eqref{nonlinear:eq:perturbed-ILED:zero-order-commutation-control}
  with $P_0 = \PseudoSubPFixer^{-}$, and using Proposition
  \ref{nonlinear:prop:perturbed-ILED:lin-theory} we observe that there exists
  some $C_1$ such that
  \begin{equation*}
    \abs*{\bangle*{\widetilde{\SubPOp}_{g,g_b}(\mathring{\chi}h), \KillT(\mathring{\chi}h)}_{L^2(\DomainOfIntegration)}}
    \lesssim \norm{\tilde{g}}_{W^{1,\infty}(\DomainOfIntegration)}\norm{h}_{H^1(\DomainOfIntegration)}^2
    + C_1\norm{h}_{L^2(\DomainOfIntegration)}^2.
  \end{equation*}
  The inequality in \eqref{nonlinear:eq:perturbed-ILED:non-trapping-bulk} then
  follows by using Proposition \ref{nonlinear:prop:perturbed-ILED:lin-theory}
  and combining with \eqref{nonlinear:eq:perturbed-ILED:K-t-on-g:aux}, taking
  $\BSConstant$ sufficiently small.

  To prove the third conclusion in \eqref{nonlinear:eq:perturbed-ILED:gluing},
  it suffices to show that 
  \begin{equation}
    \label{nonlinear:eq:perturbed-ILED:bulk:aux1}
    \abs*{\bangle*{\left[\ScalarWaveOp[g] - \ScalarWaveOp[g_b], \breve{\chi}\right]h, \MorawetzVF(\breve{\chi}h)}_{L^2(\DomainOfIntegration)}}
    + \abs*{\bangle*{\left[\ScalarWaveOp[g] - \ScalarWaveOp[g_b], \mathring{\chi}\right]h, \KillT(\mathring{\chi}h)}_{L^2(\DomainOfIntegration)}}
    \lesssim \norm*{\tilde{g}}_{W^{1,\infty}(\DomainOfIntegration)}\norm*{h}_{H^1(\DomainOfIntegration)}. 
  \end{equation}
  But \eqref{nonlinear:eq:perturbed-ILED:bulk:aux1} follows
  directly from
  \eqref{nonlinear:eq:perturbed-ILED:zero-order-commutation-control} with
  $P_0 = \breve{\chi}$, and $P_0 = \mathring{\chi}$.
  Then \eqref{nonlinear:eq:perturbed-ILED:gluing} follows from 
  \eqref{nonlinear:eq:perturbed-ILED:lin-theory:non-trapping-bulk}
  \eqref{nonlinear:eq:perturbed-ILED:lin-theory:boundary-flux:nontrapping},  
  \eqref{nonlinear:eq:perturbed-ILED:lin-theory:gluing}, \eqref{nonlinear:eq:perturbed-ILED:lin-theory:trapping-bulk},
  where we take $\BSConstant$ to be sufficiently small to conclude.

  The remaining properties \eqref{nonlinear:eq:perturbed-ILED:KillT-timelike},
  \eqref{nonlinear:eq:perturbed-ILED:boundary-flux:nontrapping}, follow directly
  from \eqref{nonlinear:eq:perturbed-ILED:lin-theory:KillT-timelike},
  \eqref{nonlinear:eq:perturbed-ILED:lin-theory:boundary-flux:nontrapping}, by
  taking $\BSConstant$ sufficiently small.
\end{proof}

\begin{lemma}
  \label{nonlinear:lemma:perturbed-ILED:main-components}
  Let $g_b$ be a fixed, slowly-rotating \KdS{} metric, and fix
  $\delta_0>0$, $\TStar>0$, and denote
  \begin{equation*}
    \DomainOfIntegration = [0,\TStar]\times \Sigma.
  \end{equation*}
  Let $\phi = e^{-\delta_0 \tStar}h$.
  Then there exists a choice of elliptic zero-order operator
  $\PseudoSubPFixer$, and cutoff operators $\mathring{\chi}$ and
  $\breve{\chi}$ satisfying
  \eqref{nonlinear:eq:perturbed-ILED:lin-theory:cutoff:def} such that for 
  \begin{align}    
    \norm{\breve{\chi}\phi}_{H^1(\DomainOfIntegration)}
    + \norm*{\check{\chi}\phi}_{H^1(\DomainOfIntegration)}
    \lesssim{}&  \norm{e^{-\delta_0\tStar}\LinEinsteinS_g (\breve{\chi}h)}_{L^2(\DomainOfIntegration)}
      + \norm{e^{-\delta_0\tStar}\LinEinsteinS_g (\check{\chi}h)}_{L^2(\DomainOfIntegration)}\notag \\
      & + \norm*{\phi}_{L^2(\DomainOfIntegration)}
      + \norm*{\phi}_{\HkWithT{1}(\Sigma_{\TStar})}
      + \norm*{\phi}_{\HkWithT{1}(\Sigma_{0})},
      \label{nonlinear:eq:perturbed-ILED:main-components:non-trapping} \\
    \norm*{\mathring{\chi}\phi}_{H^1(\DomainOfIntegration)}
    \lesssim{}& \norm{e^{-\delta_0\tStar}\LinEinsteinS_{g}(\mathring{\chi}h)}_{L^2(\DomainOfIntegration)}
      + \norm*{\phi}_{L^2(\DomainOfIntegration)}
      + \norm*{\phi}_{\HkWithT{1}(\Sigma_{\TStar})}
      + \norm*{\phi}_{\HkWithT{1}(\Sigma_{0})}, \label{nonlinear:eq:perturbed-ILED:main-components:trapping}
  \end{align}
  where
  \begin{equation*}
    \LinEinsteinConjS_g := \PseudoSubPFixer \LinEinsteinS_g \PseudoSubPFixer^-
  \end{equation*}
\end{lemma}
\begin{proof}
  Observe that for $\phi = e^{-\delta_0\tStar} h$, we have that
  \begin{equation*}
    \LinEinsteinS_{g}\phi
    + \GInvdtdt_g^{-1}\left(
      2\delta_0\KillT + \delta_0^2\right)\phi
    = e^{-\delta_0\tStar}\LinEinsteinS_gh. 
  \end{equation*}
  Both of the conclusions in
  \eqref{nonlinear:eq:perturbed-ILED:main-components:non-trapping} and
  \eqref{nonlinear:eq:perturbed-ILED:main-components:trapping} are direct
  applications of the vectorfield multipliers, although the proof of
  \eqref{nonlinear:eq:perturbed-ILED:main-components:trapping} is slightly more
  technical due to the inclusion of the pseudo-differential
  conjugation by $\PseudoSubPFixer$. 
  
  We begin with the first conclusion,
  \eqref{nonlinear:eq:perturbed-ILED:main-components:non-trapping}. Applying the
  divergence theorem in Proposition
  \ref{nonlinear:prop:energy-estimates:spacetime-divergence-prop} with
  $\JCurrent{\MorawetzOuterVF,\LagrangeCorrOuter,\breve{m}}_g[\breve{\chi}h]$,
  we have that
  \begin{align*}
    & - \Re\bangle*{e^{-\delta_0\tStar}\LinEinsteinS_g(\breve{\chi}h), (\MorawetzOuterVF+\LagrangeCorrOuter)(\breve{\chi}\phi)}_{L^2(\DomainOfIntegration)}\\
    ={}& \int_{\DomainOfIntegration} \KCurrent{\MorawetzOuterVF, \LagrangeCorrOuter, \breve{m}}_g[\breve{\chi}\phi]
         - \Re\bangle*{\SubPOp_g(\breve{\chi}\phi), (\MorawetzOuterVF+\LagrangeCorrOuter) (\breve{\chi}\phi)}_{L^2(\DomainOfIntegration)}
         + \Re\bangle*{\PotentialOp_g(\breve{\chi}\phi), (\MorawetzOuterVF+\LagrangeCorrOuter)(\breve{\chi}\phi)}_{L^2(\DomainOfIntegration)}\\
    &- \Re\bangle*{\SubPOp_{g}(\breve{\chi}\phi), \LagrangeCorrOuter\breve{\chi}\phi}_{L^2(\DomainOfIntegration)}
    + \bangle*{\left(-\delta_0\SubPOp_0 + \GInvdtdt_g^{-1}(2\delta_0\KillT  +\delta_0^2)\right)\breve{\chi}\phi, (\MorawetzOuterVF+\LagrangeCorrOuter)(\breve{\chi}\phi)}_{L^2(\DomainOfIntegration)}\\
    &+ \int_{\EventHorizonFuture_-}\JCurrent{\MorawetzOuterVF,\LagrangeCorrOuter,\breve{m}}_{g}[\breve{\chi}\phi]\cdot n_{\EventHorizonFuture_-} 
    +  \int_{\CosmologicalHorizonFuture_+}\JCurrent{\MorawetzOuterVF,\LagrangeCorrOuter,\breve{m}}_{g}[\breve{\chi}\phi]\cdot n_{\CosmologicalHorizonFuture_+},
  \end{align*}
  where
  \begin{equation*}
    \SubPOp = \SubPOp_0\p_{\tStar} + \SubPOp_1,
  \end{equation*}
  where $\SubPOp_i$ is an $i$-th order differential operator that does
  not involve $\p_{\tStar}$.
  Similarly, applying the divergence theorem in Proposition
  \ref{nonlinear:prop:energy-estimates:spacetime-divergence-prop} with
  $\JCurrent{\MorawetzInnerNTVF,\LagrangeCorrInnerNT,0}_g[\check{\chi}h]$,
  we have that
  \begin{align*}
    & - \Re\bangle*{e^{-\delta_0\tStar}\LinEinsteinS_g(\check{\chi}h), (\MorawetzInnerNTVF+\LagrangeCorrInnerNT)(\check{\chi}\phi)}_{L^2(\DomainOfIntegration)}\\
    ={}& \int_{\DomainOfIntegration} \KCurrent{\MorawetzInnerNTVF, \LagrangeCorrInnerNT, 0}_g[\check{\chi}\phi]
         - \Re\bangle*{\SubPOp_g(\check{\chi}\phi), (\MorawetzInnerNTVF+\LagrangeCorrInnerNT) (\check{\chi}\phi)}_{L^2(\DomainOfIntegration)}
         + \Re\bangle*{\PotentialOp_g(\check{\chi}\phi), (\MorawetzInnerNTVF+\LagrangeCorrInnerNT)(\check{\chi}\phi)}_{L^2(\DomainOfIntegration)}\\
    &- \Re\bangle*{\SubPOp_{g}(\check{\chi}\phi), \LagrangeCorrInnerNT\check{\chi}\phi}_{L^2(\DomainOfIntegration)}
      + \bangle*{\left(-\delta_0\SubPOp_0 + \GInvdtdt_g^{-1}(2\delta_0\KillT  +\delta_0^2)\right)\check{\chi}\phi, (\MorawetzInnerNTVF+\LagrangeCorrInnerNT)(\check{\chi}\phi)}_{L^2(\DomainOfIntegration)}\\
    &+ \int_{\EventHorizonFuture_-}\JCurrent{\MorawetzInnerNTVF,\LagrangeCorrInnerNT,0}_{g}[\check{\chi}\phi]\cdot n_{\EventHorizonFuture_-} 
      +  \int_{\CosmologicalHorizonFuture_+}\JCurrent{\MorawetzInnerNTVF,\LagrangeCorrInnerNT,0}_{g}[\check{\chi}\phi]\cdot n_{\CosmologicalHorizonFuture_+}.
  \end{align*}

  Choosing $C$ sufficiently large, we have from Lemma
  \ref{nonlinear:lemma:perturbed-ILED:perturbed-bulk}, we then have that up to
  lower-order terms,
  \begin{align*}
    &\norm*{\breve{\chi}\phi}_{H^1(\DomainOfIntegration)}^2
    + \norm*{\check{\chi}\phi}_{H^1(\DomainOfIntegration)}^2\notag \\
    \lesssim{}&
                \int_{\DomainOfIntegration} \KCurrent{\MorawetzOuterVF,\LagrangeCorrOuter,\breve{m}}_g[\breve{\chi}\phi]
                - \Re\bangle*{\SubPOp_g(\breve{\chi}\phi), \MorawetzOuterVF (\breve{\chi}\phi)}_{L^2(\DomainOfIntegration)}
                + \bangle*{ \GInvdtdt_g^{-1}(2\delta_0\KillT  +\delta_0^2)\breve{\chi}\phi, \MorawetzOuterVF(\breve{\chi}\phi)}_{L^2(\DomainOfIntegration)}\notag\\
               & + \int_{\DomainOfIntegration} \KCurrent{\MorawetzInnerNTVF,\LagrangeCorrInnerNT,0}_g[\check{\chi}\phi]
                - \Re\bangle*{\SubPOp_g(\check{\chi}\phi), \MorawetzInnerNTVF (\check{\chi}\phi)}_{L^2(\DomainOfIntegration)}
                 + \bangle*{ \GInvdtdt_g^{-1}(2\delta_0\KillT  +\delta_0^2)\check{\chi}\phi, \MorawetzInnerNTVF(\check{\chi}\phi)}_{L^2(\DomainOfIntegration)}
                .
  \end{align*}
  Then, we conclude using the control of the boundary fluxes in
  \eqref{nonlinear:eq:perturbed-ILED:boundary-flux:nontrapping} and applying
  Cauchy-Schwarz, to control the lower-order terms, and using the bulk
  control in \eqref{nonlinear:eq:perturbed-ILED:non-trapping-bulk}. 
  
  We now prove the second conclusion,
  \eqref{nonlinear:eq:perturbed-ILED:main-components:trapping}. In this case, we
  will first use the multiplier argument with the conjugated operator
  $\LinEinsteinConjS_g $, and then show that this is sufficient to
  conclude. First observe that we can write
  \begin{equation*}
    \LinEinsteinConjS_g
    = \ScalarWaveOp_g 
    + \SubPConjOp_{g}    
    + \PotentialConjOp_g,
  \end{equation*}
  where $\SubPConjOp_{g}$ is as defined in
  \eqref{nonlinear:eq:perturbed-ILED:tilde-S:def} and
  \begin{equation*}
    \label{nonlinear:eq:perturbed-ILED:VConj-def}
    \PotentialConjOp_g = \PseudoSubPFixer \PotentialOp_{g} \PseudoSubPFixer^{-}.
  \end{equation*}

  Then, we can apply the divergence theorem in Proposition
  \ref{nonlinear:prop:energy-estimates:spacetime-divergence-prop} with the
  vectorfield multiplier
  $\JCurrent{\KillT, \delta_1, 0}_g[\mathring{\chi}\phi]$, on the region
  $\DomainOfIntegration$, to yield that
  \begin{align*}
    &\widetilde{\EnergyKill}[\TStar](\mathring{\chi}\phi)
      + \int_{\DomainOfIntegration}\KCurrent{\KillT, 0, 0}_g[\mathring{\chi}\phi]
      + 2\delta_0\norm*{\KillT \mathring{\chi}\phi}_{L^2(\DomainOfIntegration)}^2
    + \delta_1\int_{\DomainOfIntegration} g^{\alpha\beta}\p_\alpha( \mathring{\chi}\phi) \cdot \p_\beta(\overline{\mathring{\chi}\phi})\\
    ={}& \widetilde{\EnergyKill}[0](\mathring{\chi}\phi)
    - \bangle*{e^{-\delta_0\tStar}\LinEinsteinConjS_g(\mathring{\chi}h), (\KillT+\delta_1) (\mathring{\chi}\phi)}_{L^2(\DomainOfIntegration)}
    + \bangle*{\SubPConjOp_g(\mathring{\chi}\phi), (\KillT+\delta_1) (\mathring{\chi}\phi)}_{L^2(\DomainOfIntegration)}\\
         &+ \bangle*{\PotentialConjOp_g(\mathring{\chi}\phi), (\KillT+\delta_1) (\mathring{\chi}\phi)}_{L^2(\DomainOfIntegration)}
           - \delta_0^2\bangle*{\mathring{\chi}\phi, (\KillT+\delta_1) (\mathring{\chi}\phi)}_{L^2(\DomainOfIntegration)}
           + \bangle*{\delta_0\SubPOp_0(\mathring{\chi}\phi), (\KillT+\delta_1) (\mathring{\chi}\phi)}_{L^2(\DomainOfIntegration)}
           ,
  \end{align*}
  where the boundary fluxes vanish since $\mathring{\chi}h$ vanishes
  on the boundaries, and
  \begin{equation*}
    \widetilde{\EnergyKill}[\tStar](\mathring{\chi}\phi) := \int_{\Sigma_{\tStar}}\JCurrent{\KillT, \delta_1, 0}_g[\mathring{\chi}\phi]\cdot n_{\Sigma_{\tStar}}, 
  \end{equation*}
  which we emphasize may not be positive.
  
  First observe that for $\delta_1$ appropriately small with respect
  to $\delta_0$, 
  \begin{equation*}
    2\delta_0\norm*{\sqrt{A_g}^{-1}\KillT (\mathring{\chi}\phi)}_{L^2(\DomainOfIntegration)}^2
    + \delta_1\int_{\DomainOfIntegration} (g_b)^{\alpha\beta}\p_\alpha (\mathring{\chi}\phi)\cdot \p_\beta (\overline{\mathring{\chi}\phi})
    \gtrsim \delta_0\norm*{\mathring{\chi}\phi}_{H^1(\DomainOfIntegration)}^2. 
  \end{equation*}
  Then, using Lemma \ref{nonlinear:lemma:perturbed-ILED:perturbed-bulk} and
  \eqref{nonlinear:eq:perturbed-ILED:lin-theory:trapping-bulk} we see
  that for $\BSConstant$ sufficiently small and $\delta$ sufficiently
  small, we can guarantee that  
  \begin{align*}
    \norm*{\mathring{\chi}\phi}_{H^1(\DomainOfIntegration)}^2
    \lesssim{}& 2\delta_0\norm*{\sqrt{A_g}^{-1}\KillT (\mathring{\chi}\phi)}_{L^2(\DomainOfIntegration)}^2
                + \int_{\DomainOfIntegration}\KCurrent{\KillT, 0, 0}_g[\mathring{\chi}\phi]\\
              &+ \delta_1\int_{\DomainOfIntegration} g^{\alpha\beta}\p_\alpha (\mathring{\chi}\phi)\cdot \p_\beta(\overline{\mathring{\chi}\phi})
                - \bangle*{\SubPConjOp_g(\mathring{\chi}\phi), \KillT (\mathring{\chi}\phi)}_{L^2(\DomainOfIntegration)}.
  \end{align*}  
  Then, by a Cauchy-Schwarz argument we have that
  \begin{equation}
    \label{nonlinear:eq:perturbed-ILED:main-components:trapping:conjugated}
    \norm*{\mathring{\chi}\phi}_{H^1(\DomainOfIntegration)}^2
    \lesssim 
    \norm*{e^{-\delta_0\tStar}\LinEinsteinConjS_g h}_{L^2(\DomainOfIntegration_{\sStar})}^2    
    + C_1 \norm{\phi}_{L^2(\DomainOfIntegration_{\sStar})}^2
    + \norm*{\phi}_{\HkWithT{1}(\Sigma_{\TStar})}^2
    + \norm*{\phi}_{\HkWithT{1}(\Sigma_{0})}^2
    , 
  \end{equation}
  as desired.

  We now show that we can recover
  \eqref{nonlinear:eq:perturbed-ILED:main-components:trapping} from
  \eqref{nonlinear:eq:perturbed-ILED:main-components:trapping:conjugated}.
  We first observe that since
  $\PseudoSubPFixer, \PseudoSubPFixer^-\in \Op S^0(\Sigma)$, we have
  by combining standard elliptic estimates with Lemma
  \ref{nonlinear:lemma:BA-E:D-t-minus-1-control} that
  \begin{equation*}
    \norm*{\mathring{\chi}\phi}_{H^1(\DomainOfIntegration)}
    \lesssim \norm*{\PseudoSubPFixer (\mathring{\chi}\phi)}_{H^1(\DomainOfIntegration)}
    + \norm*{e^{-\delta_0\tStar}\LinEinsteinS_g (\mathring{\chi}h)}_{L^2(\DomainOfIntegration)}
    + \norm*{\mathring{\chi}\phi}_{L^2(\DomainOfIntegration)}
    + \norm*{\mathring{\chi}\phi}_{\HkWithT{1}(\Sigma_{0})}
    + \norm*{\mathring{\chi}\phi}_{\HkWithT{1}(\Sigma_{\TStar})}. 
  \end{equation*}
  Moreover, using
  \eqref{nonlinear:eq:perturbed-ILED:main-components:trapping:conjugated}, we
  have that
  \begin{equation*}
    \norm*{\PseudoSubPFixer (\mathring{\chi}\phi)}_{H^1(\DomainOfIntegration)}
    \lesssim \norm*{e^{-\delta_0\tStar}\PseudoSubPFixer \LinEinsteinS_g (\mathring{\chi}h)}_{L^2(\DomainOfIntegration)}
    + \norm*{e^{-\delta_0\tStar}\PseudoSubPFixer \LinEinsteinS_g\circ R_{-\infty}(\mathring{\chi}h)}_{L^2(\DomainOfIntegration)},
  \end{equation*}
  where $R_{-\infty} = 1 - \PseudoSubPFixer^-\PseudoSubPFixer \in \Op S^{-\infty}(\Sigma)$.
  Then,
  \begin{equation*}
    \norm*{e^{-\delta_0\tStar}\PseudoSubPFixer \LinEinsteinS_g \circ R_{-\infty}(\mathring{\chi}h)}_{L^2(\DomainOfIntegration)}
    \lesssim \norm*{e^{-\delta_0\tStar}\LinEinsteinS_g(\mathring{\chi}h)}_{L^2(\DomainOfIntegration)}
    + \norm*{e^{-\delta_0\tStar}D_{\tStar}(\mathring{\chi}h)}_{H^{-1}(\DomainOfIntegration)}.
  \end{equation*}
  We conclude the proof of
  \eqref{nonlinear:eq:perturbed-ILED:main-components:trapping}, and with it the
  proof of Lemma \ref{nonlinear:lemma:perturbed-ILED:main-components}, using the
  control of
  $\norm*{D_{\tStar}(\mathring{\chi}h)}_{H^{-1}(\DomainOfIntegration)}$
  from Lemma \ref{nonlinear:lemma:BA-E:D-t-minus-1-control}.
\end{proof}

\subsection{Proof of Proposition \ref{nonlinear:prop:BA-E:main-energy-estimate}}
\label{nonlinear:sec:proof:prop:BA-E:main-energy-estimate}

We are now ready to prove Proposition
\ref{nonlinear:prop:BA-E:main-energy-estimate}.
\begin{proof}[Proof of Proposition
  \ref{nonlinear:prop:BA-E:main-energy-estimate}]
  We observe that the estimates in
  \eqref{nonlinear:eq:perturbed-ILED:main-components:non-trapping} and
  \eqref{nonlinear:eq:perturbed-ILED:main-components:trapping} are almost enough
  to conclude immediately. The only complication is the presence of
  the cutoff functions inside of $\LinEinsteinS_g$. To commute the
  cutoffs, we use the fact that up to lower-order terms, 
  \begin{align*}
     &\bangle*{\LinEinsteinS_g(\breve{\chi}h), (X+q)(\breve{\chi}h)}_{L^2(\DomainOfIntegration)}
    + \bangle*{\LinEinsteinS_g(\mathring{\chi}h), \KillT(\mathring{\chi}h)}_{L^2(\DomainOfIntegration)}\\
    ={}&  \bangle*{\breve{\chi}\LinEinsteinS_gh, (X+q)(\breve{\chi}h)}_{L^2(\DomainOfIntegration)}
    + \bangle*{\mathring{\chi}\LinEinsteinS_gh, \KillT(\mathring{\chi}h)}_{L^2(\DomainOfIntegration)}
    + \bangle*{\left[\ScalarWaveOp[g], \breve{\chi}\right]h, \breve{\chi}(X+q)h}_{L^2(\DomainOfIntegration)}\\
    &+ \bangle*{\left[\ScalarWaveOp[g], \mathring{\chi}\right]h, \mathring{\chi}\KillT h}_{L^2(\DomainOfIntegration)}. 
  \end{align*}
  Then, using \eqref{nonlinear:eq:perturbed-ILED:gluing}, we see that the extra
  commutation terms can be absorbed into the positive bulk, and the
  rest of the argument proceeds exactly as above for Lemma
  \ref{nonlinear:lemma:perturbed-ILED:main-components}.   
\end{proof}

\printbibliography[heading=bibintoc]

\end{document}